\newtheorem{theorem}{Theorem}[section]
\newtheorem{proposition}[theorem]{Proposition}
\newtheorem{lemma}[theorem]{Lemma}
\newtheorem{corollary}[theorem]{Corollary}
\newtheorem{IntroThm}{Theorem}
\theoremstyle{definition}
\theoremstyle{remark}
\newtheorem{remark}[theorem]{Remark}
\newtheorem{remarks}[theorem]{Remarks}
\newtheorem{ex}[theorem]{Example}
\newtheorem{exs}[theorem]{Examples}
\numberwithin{equation}{section}
\newcommand{\mm}{\mathfrak{1}}
\newcommand{\an}{{\rm an}}
\newcommand{\CH}{{\rm CH}}
\newcommand{\codim}{{\rm codim}}
\newcommand{\rank}{{\rm rank}}
\newcommand{\Pic}{{\rm Pic}}
\newcommand{\End}{{\rm End}}
\newcommand{\Hom}{{\rm Hom}}
\newcommand{\Spec}{{\rm Spec\,}}
\newcommand{\Char}{{\rm char}}
\newcommand{\Tr}{{\rm Tr}}
\newcommand{\0}{\emptyset}
\newcommand{\sA}{{\mathcal A}}
\newcommand{\sC}{{\mathcal C}}
\newcommand{\sF}{{\mathcal F}}
\newcommand{\sH}{{\mathcal H}}
\newcommand{\sI}{{\mathcal I}}
\newcommand{\sK}{{\mathcal K}}
\newcommand{\sM}{{\mathcal M}}
\newcommand{\sO}{{\mathcal O}}
\newcommand{\sU}{{\mathcal U}}
\newcommand{\sV}{{\mathcal V}}
\newcommand{\sW}{{\mathcal W}}
\newcommand{\sX}{{\mathcal X}}
\newcommand{\sY}{{\mathcal Y}}
\newcommand{\A}{{\mathbb A}}
\newcommand{\C}{{\mathbb C}}
\newcommand{\G}{{\mathbb G}}
\renewcommand{\P}{{\mathbb P}}
\newcommand{\Q}{{\mathbb Q}}
\newcommand{\R}{{\mathbb R}}
\newcommand{\Z}{{\mathbb Z}}
\renewcommand{\det}{\operatorname{det}}
\newcommand{\id}{{\operatorname{\rm Id}}}
\newcommand{\Zar}{{\text{\rm Zar}}} 
\newcommand{\Nis}{{\text{\rm Nis}}}
\newcommand{\colim}{{\mathop{\rm colim}}}
\newcommand{\<}{\langle}
\renewcommand{\>}{\rangle}
\renewcommand{\dim}{{\operatorname{\rm dim}}}
\newcommand{\del}{\partial}
\newcommand{\Spc}{{\mathbf{Spc}}}
\newcommand{\Sm}{{\mathbf{Sm}}}
\newcommand{\Proj}{{\operatorname{Proj}}}
\newcommand{\Ab}{{\mathbf{Ab}}}
\newcommand{\Sym}{{\operatorname{Sym}}}
\newcommand{\Gr}{{\operatorname{\rm Gr}}} 
\newcommand{\rnk}{{\operatorname{\text{rank}}}} 
\newcommand{\crit}{\mathfrak{c}}
\newcommand{\GW}{{\operatorname{GW}}} 
\newcommand{\sGW}{{\mathcal{GW}}} 
\newcommand{\SH}{{\operatorname{SH}}} 
\newcommand{\Th}{{\operatorname{Th}}} 
\renewcommand{\th}{{\operatorname{th}}} 
\newcommand{\sHom}{\mathcal{H}om}
\newcommand{\Kos}{{\operatorname{Kos}}}
\newcommand{\Frob}{{\operatorname{Frob}}}
\newcommand{\BGL}{\operatorname{BGL}}
\newcommand{\GL}{\operatorname{GL}}
\newcommand{\SL}{\operatorname{SL}}
\newcommand{\BSL}{\operatorname{BSL}}
\newcommand{\EM}{{\operatorname{EM}}}
\newcommand{\can}{\text{can}}
\newcommand{\ev}{\text{\it ev}}
\newcommand{\KO}{{\operatorname{KO}}}
\newcommand{\CW}{{\operatorname{CW}}}
\newcommand{\KGL}{{\operatorname{KGL}}}
\newcommand{\ind}[1]{}
\newcommand{\inp}[1]{}
\newcommand{\+}{\widehat{+}}
\newcommand{\mS}{\mathbb{S}}
\newcommand{\HM}{{\sH\sM}}
\newcommand{\gm}{{{\operatorname{gm}}} }
\begin{document}
\setcounter{tocdepth}{1}

\title[Enumerative geometry with quadratic forms]{Aspects of enumerative geometry with quadratic forms}

\author{Marc Levine}

\begin{abstract} Using the motivic stable homotopy category over a field $k$, a smooth   variety $X$ over $k$ has an Euler characteristic $\chi(X/k)$ in the Grothendieck-Witt ring $\GW(k)$. The rank of $\chi(X/k)$ is the classical $\Z$-valued Euler characteristic, defined using singular cohomology or \'etale cohomology, and the signature of $\chi(X/k)$ under a real embedding $\sigma:k\to \R$ gives the topological Euler characteristic of the real points $X^\sigma(\R)$.

We develop tools to compute $\chi(X/k)$, assuming  $k$ has characteristic $\neq2$  and apply these to refine some classical formulas in enumerative geometry, such as formulas for the top Chern class of the dual, symmetric powers and tensor products of bundles, to identities for the Euler classes in Chow-Witt groups. We also refine  the classical Riemann-Hurwitz formula to an identity in $\GW(k)$ and compute  $\chi(X/k)$ for all  hypersurfaces in $\P^{n+1}_k$ defined by a polynomial of the form $\sum_{i=0}^{n+1}a_iX_i^m$; this latter includes the case of an arbitrary quadric hypersurface. 

This paper is a revision of ``Toward an enumerative geometry with quadratic forms'' \cite{LevQuad}.
\end{abstract}
\maketitle

\tableofcontents

\section*{Introduction}\label{sec:Intro}
We work throughout in the category of smooth quasi-projective schemes over a field $k$, 
$\Sm_k$, with $\Char(k)\neq 2$. The main goal of this paper is to take steps toward constructing a good theory of enumerative geometry with values in quadratic forms, refining the classical $\Z$-valued enumerative geometry. The foundations of this theory have been laid by work of Barge-Morel \cite{BargeMorel}, Fasel \cite{FaselCW}, Fasel-Srinivas \cite{FaselSri} and  Morel \cite{MorelICTP, MorelA1} (and many others), and first steps in this direction have been taken by Hoyois \cite{HoyoisGL}, Kass-Wickelgren \cite{KassWickelgren, KassWickelgrenLines} and Pauli \cite{Pauli}. 

The main tool is the replacement of the Chow groups $\CH^n(X)$ of a smooth variety $X$, viewed via Bloch's formula as the cohomology of the Milnor $K$-sheaves
\[
\CH^n(X)\cong H^n(X, \sK^M_n),
\]
with the {\em  Chow-Witt groups} of Barge-Morel \cite{BargeMorel, FaselCW}
\[
\widetilde{\CH}^n(X;L):=H^n(X, \sK^{MW}_n(L)).
\]
Here $\sK^{MW}_n(L)$ is the $n$th Milnor-Witt sheaf, as defined by Hopkins-Morel \cite{MorelICTP, MorelA1}, twisted by a line bundle $L$ on $X$. This theory has many of the formal properties of the Chow ring, with the subtlety that one needs suitable twists to allow for the pushforward maps: 
 for a proper morphism $f:Y\to X$  of relative dimension $d$, one has 
\[
f_*: H^a(Y, \sK^{MW}_b(f^*L\otimes\omega_{Y/k}))\to H^{a-d}(X, \sK^{MW}_{b-d}(L\otimes\omega_{X/k})),
\]
where $\omega_{X/k}:=\det\Omega_{X/k}$, $\omega_{Y/k}:=\det\Omega_{Y/k}$ are the respective dualizing sheaves.

The second important difference is that, although a rank $r$ vector bundle $V\to X$ has an Euler class \cite[\S 2.1]{BargeMorel}
\[
e^\CW(V)\in H^r(X, \sK^{MW}_r(\det^{-1}V)),
\]
the group this class lives in depends on $V$ (or at least $\det V$). Under the map
\[
\sK^{MW}_*(\det^{-1}V)\to \sK_*^M
\]
$e^\CW(V)$ maps to the top Chern class $c_r(V)$ and  $e^\CW(\det V)$ maps to $c_1(V)$, but   there is no projective bundle formula for the oriented Chow groups, and thus no obvious ``intermediate''   classes lifting the other Chern classes of $V$ to the oriented setting. There are versions of the classical Pontryagin classes,  but we will not study these in detail here. 

There is still enough here to define an Euler characteristic of a smooth  projective $k$-scheme $p:X\to\Spec k$ as
\[
\chi^{\CW}(X/k):=p_*(e^\CW(T_X))\in K^{MW}_0(k),
\]
where, if $X$ has dimension $d$ over $k$,  $e^\CW(T_X)\in H^d(X, \sK^{MW}_d(\omega_{X/k}))$ is the Euler class of the tangent bundle $T_X$. Morel  \cite[Lemma 6.3.8]{MorelICTP} identifies $K^{MW}_0(k)$ with the Grothendieck-Witt group of non-degenerate quadratic forms over $k$, $\GW(k)$, so we have the Euler characteristic $\chi^\CW(X/k)\in \GW(k)$. The fact that the Euler class $e^\CW(T_X)$  maps to  $c_d(T_X)\in H^d(X, \sK^M_d)=\CH_0(X)$ under the map of sheaves $\sK^{MW}_*(\omega_{X/k})\to \sK^M_d$ shows that the image $\chi^{\CW}(X/k)$ under the rank homomorphism $\GW(k)\to \Z$ is the classical Euler characteristic of $X$,  which agrees with the topological Euler characteristic  of $X(\C)$ defined using singular cohomology if $k\subset \C$, or the $\ell$-adic Euler characteristic of $X_{\bar{k}}$, defined using \'etale cohomology. 

One can also define a categorical Euler characteristic $\chi(X/k)$, by using the infinite suspension spectrum $\Sigma_T^\infty X_+\in \SH(k)$, where $\SH(k)$ is the motivic stable homotopy category over $k$. Hoyois \cite[Theorem 5.22]{Hoyois6}, Hu \cite[Appendix A]{Hu}, Riou \cite{Riou}  and Voevodsky  \cite[\S2]{Voev} have shown that this suspension spectrum is always a strongly dualizable object in $\SH(k)$, so it gives rise in a standard way  to an endomorphism of the unit object $\mS_k$:
\[
\chi(X/k)\in \End_{\SH(k)}(\mS_k).
\]
By Morel's theorem  \cite[Theorem 6.4.1, Remark 6.4.2]{MorelICTP} there is a canonical isomorphism $\End_{\SH(k)}(\mS_k)\cong \GW(k)$, so we have a second Euler characteristic in $\GW(k)$\footnote{Morel's theorem is for $k$ perfect. In positive characteristic, one needs to invert the characteristic if $k$ is not perfect, but this is mostly harmless, see Remark~\ref{rems:NonperfectFields}}. 

We should mention that for $k\subset \R\subset \C$, the image of the categorical Euler characteristic $\chi(X/\R)$ in $\GW(\R)$ has the property that its signature gives the Euler characteristic of $X(\R)$, while  the rank gives the Euler characteristic of $X(\C)$.  

In our paper  with A. Raksit \cite{LREulerChar} we showed that these two Euler characteristics agree.

\begin{IntroThm}[\hbox{\cite[Theorem 8.4]{LREulerChar}}]\label{thm:comparison} Let $X$ be a smooth projective variety of pure dimension over $k$. Then 
\[
\chi^{\CW}(X/k)=\chi(X/k)
\]
in $\GW(k)$.
\end{IntroThm}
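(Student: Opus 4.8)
The plan is to compare the two Euler characteristics by realizing both as pushforwards of an Euler class, but along a common formalism. The key observation is that $\chi^{cat}(X)$, although defined categorically via dualizability in $\SH(k)$, admits a concrete description: for a dualizable object the categorical Euler characteristic is the composite of the coevaluation and evaluation maps, and by the work of Hoyois and others on the six-functor formalism in $\SH(k)$, for $p\colon X\to \Spec k$ smooth projective this composite equals $p_* p^* \mathbf{1}$ evaluated suitably, i.e.\ the trace of the identity on $p_\# p^* \mathbf{1}_k \cong p_* \Sigma^{T_X} \mathbf{1}_X$ (using Atiyah duality / the purity equivalence $\Sigma_T^\infty X_+ {}^\vee \simeq \Sigma^{-T_X}\Sigma_T^\infty X_+$). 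Thus $\chi^{cat}(X)$ is the pushforward along $p$ of the Euler class of the tangent bundle, computed in the $\SH(k)$-setting, i.e.\ in the cohomology theory represented by the sphere spectrum. So the real content is to identify the ``motivic-stable'' Euler class and pushforward with the Milnor--Witt ones used to define $\chi^{CW}(X)$.

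First I would set up the comparison between $\sK^{MW}_*$-cohomology and the cohomology represented by $\mathbf{1}_k$ in $\SH(k)$. The Milnor--Witt sheaves compute (by Morel's connectivity and the structure of the homotopy $t$-structure) the $0$-line of the stable homotopy sheaves: $\underline{\pi}_{-n,-n}(\mathbf{1}_k) \cong \sK^{MW}_n$, and more precisely there is a natural map from $\sK^{MW}_n(L)$-cohomology to the bigraded cohomology $\mathbf{1}_k^{*,*}$, at least in the ``diagonal'' bidegrees relevant here, compatible with twists by line bundles (Thom classes) and with pushforward for proper maps of smooth schemes (the Morel--Fasel pushforward on the $\sK^{MW}$ side, the Gysin/Becker--Gottlieb transfer on the spectrum side). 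Second, I would check that the Euler class $e(T_X) \in H^d(X,\sK^{MW}_d(\omega_{X/k}))$ of Barge--Morel maps, under this comparison, to the Euler class governing Atiyah duality — equivalently, that the Thom class of $T_X$ in $\sK^{MW}$-cohomology agrees with the one coming from the purity isomorphism in $\SH(k)$. This is essentially the statement that the Morel--Fasel orientation and the canonical $\mathrm{GW}$-orientation on the sphere spectrum coincide, which should follow from the explicit computation of both on the normal bundle of a closed point (both give $\langle 1\rangle$ up to the universal coefficient).

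Third, I would assemble: applying $p_*$ to $e(T_X)$ in $\sK^{MW}$-cohomology lands in $H^0(\Spec k, \sK^{MW}_0) = \GW(k)$ and equals $\chi^{CW}(X)$ by definition; transporting through the comparison isomorphism, this equals $p_*(e(T_X))$ computed in $\mathbf{1}_k$-cohomology, which by the Atiyah-duality description of the categorical trace is exactly $\chi^{cat}(X)$. One technical point deserving care: the comparison map $\sK^{MW}$-cohomology $\to$ $\mathbf{1}_k$-cohomology is an isomorphism only in a limited range of bidegrees, so I would want to ensure all the groups appearing ($H^d(X,\sK^{MW}_d(\omega)) $, $H^0(k,\sK^{MW}_0)$, and the pushforward) sit in that range; this is fine because for smooth $X$ of dimension $d$ these are precisely the ``diagonal'' groups where Morel's theorem applies.

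\textbf{Main obstacle.} The hardest part will be the naturality and compatibility in the second step — matching the two pushforward (Gysin) formalisms. On the $\sK^{MW}$ side the pushforward is defined via Rost-style transfers and the twist by relative dualizing sheaves (Fasel), while on the $\SH(k)$ side it is the six-functor $p_*$ together with the purity equivalence; showing these agree, compatibly with the identification of Euler/Thom classes, requires either a careful bootstrapping from the case of closed immersions and projective bundles, or invoking a uniqueness theorem for orientations/transfers on $\mathrm{MW}$-motivic cohomology. I expect this to be where the real work lies; the rest is formal manipulation of dualizable objects and Morel's identification $\End_{\SH(k)}(\mathbf{1}_k) \cong \GW(k)$.
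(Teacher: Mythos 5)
Your proposal is correct and follows essentially the same route as the paper: express $\chi^{cat}(X)$ via the explicit duality data as a pushforward of the Euler class of $T_X$, compare Milnor--Witt cohomology with the cohomology of the sphere (the paper does this by passing to the truncation $H_0\mS_k$, which represents $\sK^{MW}_*$-cohomology on the nose and whose $(0,0)$-group is still $\GW(k)$), match Thom classes, and then do the real work of identifying Fasel's Gersten-complex pushforward with the duality-theoretic one by bootstrapping from closed immersions and projections. You have correctly located the main difficulty in that last comparison, which is exactly where the paper spends its effort.
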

One  consequence of this comparison result is the fact that the Euler characteristic of an odd dimensional smooth projective variety is always hyperbolic  (Corollary~\ref{cor:OddEulerChar}); one can view this a a generalization of the fact that the topological Euler characteristic of a real oriented manifold of dimension $4m+2$ is always even. This has already been proven in our paper \cite{LREulerChar} with Raksit using hermitian $K$-theory, but we include this somewhat different proof relying on the Chow-Witt Euler class here. 

We then turn to developing some computational techniques. Here the main goal is to compare $e^\CW(V)$ and $e^\CW(V\otimes L)$ for a line bundle $L$, without having the ``lower Chern classes'' of $V$ on hand. We also prove a useful formula relating Euler class of a vector bundle $V$ with that of its dual (Theorem~\ref{thm:Dual}): 
\[
e^\CW(V)=(-1)^{\text{rank}V}e^\CW(V^\vee).
\]
and compute the Euler class of symmetric powers and tensor products of rank two bundles. 

Kass-Wickelgren \cite{KassWickelgren} have  constructed an ``Euler number'' in $\GW(k)$ for a relatively oriented algebraic vector bundle with enough sections on a smooth projective $k$-scheme; as one application,  they use this in \cite{KassWickelgrenLines}  to lift the count of lines on a smooth cubic surface  over $k$ to an equality in $\GW(k)$. For a pencil $f: X \to \P^1$ of curves on a smooth projective surface $X$ over $k$, they lift the classical computation of the Euler number of $T^*_X \otimes f^*T_{\P^1}$ in terms of the singularities of the fibers of $f$ to an equality in $\GW(k)$. This approach to Euler numbers has been studied further by Bachmann-Wickelgren \cite{BachmannWickelgren}.

We approach the question of lifting such classical degeneration formulas to $\GW(k)$ from a somewhat different point of view. We apply Theorem~\ref{thm:comparison} and the results obtained in \S\ref{sec:Local}-\ref{sec:Twisting} to give a generalization of the classical degeneration formulas for counting singularities in a morphism $f:X\to C$, with $X$ a smooth projective variety and $C$ a smooth projective curve (admitting for technical reasons a half-canonical line bundle in case $X$ has odd dimension); for $X$ a curve, this a refinement of the classical Riemann-Hurwitz formula. Our generalization gives an identity in $\GW(k)$; applying the rank homomorphism recovers the classical numerical formulas. In the case of even dimensional varieties, we apply the degeneration formula to compute the Euler characteristic of generalized Fermat hypersurfaces, that is, a hypersurface  $X\subset \P^{n+1}_k$ defined by a polynomial of the form $\sum_{i=0}^{n+1}a_iX_i^m$, see Theorem~\ref{thm:EulerCharDiagonalHyp}. As a special case, we find an explicit formula for the Euler characteristic of a quadric hypersurface, Corollary~\ref{cor:EvenQuadric EulerChar}. The question of computing the Euler characteristic of a quadric hypersurface was raised by Kass and Wickelgren\footnote{private communication}.

This current version is a substantial revision of the original \cite{LevQuad}, helped along by many developments in this area since then. Much of the first version was concerned with showing that the pushforward maps for the Chow-Witt groups, as defined by Fasel \cite{FaselCW}, agree with those using the structure of $H^*(-, \sK^{MW}_*)$ as an $\SL$-oriented theory, and using this to prove Theorem~\ref{thm:comparison}. Both of these results have been subsumed in our paper with Raksit \cite{LREulerChar}. The original proof of Theorem~\ref{thm:Dual} followed Asok-Fasel \cite{AsokFaselEuler} in viewing $e^\CW(V)$ as an obstruction class; recent work of Wendt  \cite{Wendt}, building on the paper of Hornbostel-Wendt \cite{HornbostelWendt}, allows a quicker path to this result and also gives a nice extension to the Chow-Witt groups of products of classifying spaces $\BGL_n$ and $\BSL_n$ of the fact that the map $(\rnk, \pi):\GW(k)\to \Z\times W(k)$ is injective, where $W(k)$ is the Witt ring and $\pi:\GW(k)\to W(k)$ is the canonical surjection. This shows that one can detect universal identities among  Chow-Witt-valued Euler classes by passing to the corresponding top Chern classes and the Euler classes with values in the the cohomology of the Witt sheaves (see Proposition~\ref{prop:Reduction} and Theorem~\ref{thm:GL2SplittingPrinc}). This useful fact also allows us to give a much simpler proof of our result comparing the Euler characteristics of a vector bundle $V$ and the $L$-twisted bundle $V\otimes L$, for $L$ a line bundle.  Using the Witt sheaves also enables us to improve our main result, the quadratic Riemann-Hurwitz formula (Corollary~\ref{cor:Fibering1}), removing the hypothesis of the existence of a theta-characteristic on the target curve in case the source variety has even dimension. We have also added a section discussing the work of Kass-Wickelgren and Bachmann-Wickelgren, which gives a description of the local indices for a section of a vector bundle with isolated zeros in terms of an associated
 Scheja-Storch quadratic form; in our previous version, we had restricted this explicit representation to the case of ``diagonalizable'' sections. 

I am grateful to Aravind Asok for a number of very helpful suggestions, as well as corrections to an earlier version of this manuscript. I also wish to thank Kirsten Wickelgren for raising a number of questions on the results in the earlier version, for example, asking if the Riemann-Hurwitz formula would hold in the even-dimensional case without assuming the existence of a theta-divisor on the target curve. Finally, thanks are due to Matthias Wendt  for very helpful discussions about his paper \cite{Wendt} and to the referee, whose comments  greatly improved the organization and presentation of this paper. \footnote{This paper is part of a project that  has received funding from the European Research Council (ERC) under the European Union's Horizon 2020 research and innovation programme (grant agreement No. 832833)} 

\section{The categorical Euler characteristic}\label{sec:CatEulerChar}

In this section we review and collect a number of facts and constructions concerning Euler characteristics in a symmetric monoidal category, specializing quickly to the motivic stable homotopy category $\SH(k)$ over a  field $k$. Most of the results here are not new, but we include this introductory section to give an overview of some of the basic properties of the Grothendieck-Witt-valued Euler characteristic. Beside the motivic stable homotopy category $\SH(k)$, we will be using the unstable motivic homotopy category $\sH(k)$, the category of spaces over $k$, $\Spc(k)$, and the pointed versions $\sH_\bullet(k)$ and $\Spc_\bullet(k)$, as well as the classical stable homotopy category $\SH$. We use \cite{Hoyois6, JardineMotSym, MorelVoevodsky} as references for these constructions and their basic properties. 

\subsection{Properties of the categorical Euler characteristic}\label{subsec:CatEuler} Let   $(\sC,\otimes, \mm,\tau)$ be a symmetric monoidal category.  Following \cite{DoldPuppe}, we have the notion of the {\em  dual} $(X^\vee, \delta_X:\mm\to X\otimes X^\vee, \ev_X:X^\vee\otimes X\to \mm)$ of an object $X$ of $\sC$, where one requires that the maps $ \delta_X$ and $\ev_X$ satisfy the following conditions: the compositions
\begin{equation}\label{eqn:Duality1}
X\cong \mm\otimes X \xrightarrow{\delta_X\otimes\id}  X\otimes X^\vee\otimes X
\xrightarrow{\id\otimes\ev_X} X\otimes\mm\cong X
\end{equation}
and
\begin{equation}\label{eqn:Duality2}
X^\vee\cong X^\vee\otimes\mm\xrightarrow{\id\otimes\delta_X}X^\vee\otimes X\otimes X^\vee\xrightarrow{\ev_X\otimes\id} \mm\otimes X^\vee\cong X^\vee
\end{equation}
are the respective identity maps. It follows from an easy computation that if $(X^\vee, \delta_X, \ev_X)$ is a dual of $X$, then
$(X, \tau_{X,X^\vee}\circ\delta_X, \ev_X\circ\tau_{X^\vee, X})$ is a dual of $X^\vee$. 

It follows from \cite[Theorem 1.3]{DoldPuppe} that notion of dual described above is equivalent to that of ``strong dual'' given in  \cite[\S 1]{DoldPuppe}. It also  follows from \cite[Theorem 1.3]{DoldPuppe} that  the  dual $(X^\vee, \delta_X, \ev_X)$ of an object $X$, if it exists, is unique up to unique isomorphism.  An object admitting a dual is called {\em strongly dualizable}. 

Recall that an object $X\in \sC$ is {\em invertible} if there exists an object $Y$ and an isomorphism $\alpha:\mm\to Y\otimes X$; clearly this determines the isomorphism class of $Y$. We call $Y$ the {\em inverse} to $X$ and write  $Y=X^{-1}$. By \cite[Proposition 4.11]{Dugger} an invertible $X$ is strongly dualizable with dual $X^{-1}$ and with $\delta_{X^{-1}}=\alpha$ (but note, $\ev_{X^{-1}}$ is not necessarily $\alpha^{-1}\circ\tau_{X^{-1},X}$).
 
 For $X$ strongly dualizable, we have the {\em categorical Euler characteristic} $\chi_\sC(X)\in \End_\sC(\mm)$ defined as the composition
\[
\mm\xrightarrow{\delta_X}X\otimes X^\vee\xrightarrow{\tau_{X,X^\vee}}X^\vee\otimes X\xrightarrow {\ev_X}\mm.
\]
Clearly the collection of strongly dualizable objects in $\sC$ is closed under $\otimes$
and 
\[ \chi_\sC(X\otimes Y)=\chi_\sC(X)\cdot \chi_\sC(Y),\
\chi_\sC(X^\vee)=\chi_\sC(X),
\]
for strongly dualizable objects $X$ and $Y$.

Let  $\sC$ be a triangulated tensor category. May \cite[Theorem 0.1]{May} has given conditions under which   the collection of strongly dualizable objects in $\sC$ forms the objects in a thick subcategory of $\sC$, and the Euler characteristic is additive in distinguished triangles: if $A\to B\to C\to A[1]$ is a distinguished triangle of strongly dualizable objects, then $\chi_\sC(B)=\chi_\sC(A)+\chi_\sC(C)$, and $\chi_\sC(A[1])=-\chi_\sC(A)$. In particular, for $X\in \sC$ a strongly  dualizable object, each translation $X[p]$ of $X$ is a strongly dualizable object. The May axioms are not satisfied for an arbitrary  triangulated tensor category, but as noted in {\it loc. cit.}, they are satisfied for the classical stable homotopy category $\SH$ and for $\SH(k)$. In addition to assuming that $\sC$ is a {\em closed} symmetric monoidal category (i.e., there are internal Homs), May requires various compatibilities of the monoidal product with the triangulated structure. See \cite[\S 4]{May} for details.

The respective sphere spectra $\mS\in \SH$, $\mS_k\in \SH(k)$ are the units in the symmetric monoidal categories $\SH$, $\SH(k)$.

\begin{remarks}\label{rems:NonperfectFields} 1. As we have mentioned in the introduction, for $k$ a perfect field, Morel's theorem \cite[Theorem 6.4.1, Remark 6.4.2]{MorelICTP}  gives a natural isomorphism $\GW(k)\to \End_{\SH(k)}(\mS_k)$; we will usually view the categorical Euler characteristic $\chi_{\SH(k)}(-)$ as being valued in $\GW(k)$. For $F$ a field and $u\in F^\times$ a unit, we let $\<u\>\in \GW(F)$ denote the rank one quadratic form $x\mapsto ux^2$; for  a positive integer $n$, we set $n_\epsilon:=\sum_{i=0}^{n-1}\<-1\>^i$.
\\[2pt]
2. We have also mentioned in the introduction that for $X$ smooth and projective over $k$, the suspension spectrum $\Sigma_T^\infty X_+$ is a strongly dualizable object in $\SH(k)$ (see 
\cite[Theorem 5.22]{Hoyois6},\cite[Appendix A]{Hu},  \cite{Riou}  and   \cite[\S2]{Voev}). If $k$ admits resolution of singularities and $U$ is in $\Sm_k$, then taking a smooth projective completion with complement a normal crossing divisor, and using suitable localization distinguished triangles,  May's results mentioned above imply that $\Sigma^\infty_TU_+$ is strongly dualizable in $\SH(k)$.

More generally,  Riou \cite[Theorem B.2]{EllipticFlop} has shown that for $k$ a perfect field of characteristic $p>0$,  and $U\in \Sm_k$, $\Sigma^\infty_TU_+$ is strongly dualizable in $\SH(k)[1/p]$, so has a well-defined Euler characteristic in  $\GW(k)[1/p]$. Assuming as we do $p$ to be odd, since each element of the kernel $I(k)$ of the rank homomorphism $\GW(k)\to \Z$ has finite order a power of 2, the map $\GW(k)\to \GW(k)[1/p]$ is injective, and defines an isomorphism from $I(k)$ to the kernel of $ \GW(k)[1/p]\to \Z[1/p]$. Moreover, since the \'etale 
Euler characteristic of $U$ is $\Z$-valued, the categorical Euler characteristic $\chi_{\SH(k)[1/p]}(\Sigma^\infty_TU_+)$ lands in $\GW(k)\subset \GW(k)[1/p]$. Thus, even in positive characteristic, we may treat each $U\in \Sm_k$ as dualizable by passing to $\SH(k)[1/p]$, and we still get a a categorical Euler characteristic valued in $\GW(k)$.  

Similarly, for $k$ an arbitrary field of positive characteristic $p\neq2$, we may pass to the perfect closure $k^{perf}$. For each $n$, the base-extension map $\GW(k)\to \GW(k^{1/p^n})$ is an isomorphism, with inverse induced by the Frobenius map $\Frob^n:k^{1/p^n}\to k$, so $\GW(k)\cong \GW(k^{perf})$.  We may therefore  work in $\SH(k^{perf})[1/p]$ and still have a $\GW(k)$-valued Euler characteristic. We will silently pass to $\SH(k)[1/p]$ or $\SH(k^{perf})[1/p]$ as needed in the remainder of the paper, and refer to a space $\sX\in \Spc(k)$ as dualizable if $\Sigma^\infty_T\sX_+\in \SH(k^{perf})[1/p]$ is strongly dualizable. 

For  a dualizable space $\sX\in \Spc(k)$, we write $\chi(\sX/k)$ for $\chi_{\SH(k)}(\Sigma_T^\infty\sX_+)$ or for $\chi_{\SH(k^{perf})[1/p]}(\Sigma_T^\infty\sX_+)$ if we need to invert the characteristic and pass to $k^{perf}$.  For a dualizable space
$\sX\in \Spc_\bullet(k)$ we similarly write $\chi(\sX/k)$ for $\chi_{\SH(k)}(\Sigma_T^\infty\sX)$ or $\chi_{\SH(k^{perf})[1/p]}(\Sigma_T^\infty\sX)$.
\end{remarks}

In $\SH(k)$ we have for $a, b\in \Z$ the suspension operators $\Sigma^{a,b}:\SH(k)\to \SH(k)$, $\Sigma^{a,b} :=\Sigma^{a-b}_{S^1}\circ \Sigma^b_{\G_m}$,  which are commuting autoequivalences with  $\Sigma^{a,b}\circ \Sigma^{a',b'}= \Sigma^{a+a', b+b'}$ and $\Sigma^{0,0}=\id$. Moreover, we have the canonical isomorphisms $\Sigma^{a,b}\sX\wedge \Sigma^{a',b'}\sY\cong\Sigma^{a+a', b+b'}(\sX\wedge \sY)$. This gives us the invertible objects $\mS_k^{a,b}:=\Sigma^{a,b}\mS_k$ of $\SH(k)$ with inverse $\mS_k^{-a,-b}$, which are thus strongly dualizable. 
 
For all $p\ge q\ge0$, we have  the sphere $S_k^{p,q}:=S^{p-q}\wedge \G_m^{\wedge q}\in \Spc_\bullet(k)$, and a canonical isomorphism $\mS_k^{p,q}\cong \Sigma^\infty_T S^{p,q}_k$ in $\SH(k)$. Thus $S_k^{p,q}$  is strongly dualizable for  $p\ge q\ge0$. For example $(\A^n_k\setminus\{0^n\}, \{1^n\})\cong S^{2n-1,n}_k$ is strongly dualizable.

\begin{lemma}\label{lem:EulerSphere} $\chi_{\SH(k)}(\mS_k^{p,q})=(-1)^p\cdot\<-1\>^q$. 
\end{lemma}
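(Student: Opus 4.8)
The plan is to compute $\chi^{cat}(S^{p,q})$ by reducing everything to the two basic motivic spheres $S^{1,0}$ (the simplicial circle $S^1$) and $S^{1,1} = \G_m$, and then using the multiplicativity of $\chi^{cat}$ under the smash product together with the additivity in distinguished triangles, both of which were recorded in \S\ref{subsec:CatEuler}. Recall that $S^{p,q} = (S^1)^{\wedge(p-q)}\wedge \G_m^{\wedge q}$ (with the usual indexing conventions), so once we know $\chi^{cat}(S^1)$ and $\chi^{cat}(\G_m)$ we get
\[
\chi^{cat}(S^{p,q}) = \chi^{cat}(S^1)^{p-q}\cdot\chi^{cat}(\G_m)^q.
\]
So the whole lemma comes down to the two computations $\chi^{cat}(S^1) = -1 = -\<1\>$ and $\chi^{cat}(\G_m) = -\<-1\>$; note that $(-1)^{p}\<-1\>^{q} = (-\<1\>)^{p-q}\cdot(-\<-1\>)^{q}$ since $\<1\>=1$ in $\GW(k)$, so this will match the stated formula.

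First I would handle $\chi^{cat}(S^1)$. The simplicial circle sits in the cofiber sequence $S^0 \to \mm \to \mm \to S^1$ coming from the standard presentation $S^1 = \Sigma S^0$ (equivalently, the cofiber of the fold map), or more directly one uses that in any triangulated symmetric monoidal category the suspension $A[1]$ satisfies $\chi^{cat}(A[1]) = -\chi^{cat}(A)$, which was stated explicitly above. Since $S^1 \simeq \mm[1]$ in $\SH(k)$ (the simplicial suspension is the categorical shift), we get $\chi^{cat}(S^1) = -\chi^{cat}(\mm) = -1$, i.e. $-\<1\> = -1$ in $\GW(k)$.

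The genuinely motivic input is $\chi^{cat}(\G_m)$, and this is where the one-dimensional form $\<-1\>$ enters; I expect this to be the main obstacle, or at least the only non-formal step. The cleanest route is the cofiber sequence $\G_{m\,+} \to S^0 \to ?$, or rather the splitting $\Sigma^\infty_T \G_{m\,+} \cong \mS_k \oplus \Sigma^\infty_T\G_m$ together with the cofiber sequence $\G_m \to \A^1 \setminus\{0\} \hookrightarrow \A^1 \to T = S^{2,1}$; more precisely one uses that $\Sigma^\infty_T \G_m \wedge S^1 \simeq T$, so $\chi^{cat}(\G_m)\cdot\chi^{cat}(S^1) = \chi^{cat}(T)$. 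Thus it suffices to show $\chi^{cat}(T) = \<-1\>$: indeed $T$ is the Tate sphere, it is $\otimes$-invertible in $\SH(k)$, and the Euler characteristic of a $\otimes$-invertible object is a unit in $\GW(k)$; the specific identification of this unit with $\<-1\>$ is precisely Morel's computation of the automorphism of $\mS_k$ given by the symmetry $\tau_{T,T}\colon T\wedge T \to T\wedge T$, equivalently the fact that the switch on $\P^1\wedge\P^1$ acts by $\<-1\>$. Granting that, $\chi^{cat}(T) = \<-1\>$, hence $\chi^{cat}(\G_m) = \<-1\>/\chi^{cat}(S^1) = \<-1\>\cdot(-1)^{-1} = -\<-1\>$. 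Assembling the pieces:
\[
\chi^{cat}(S^{p,q}) = (-\<1\>)^{\,p-q}\,(-\<-1\>)^{\,q} = (-1)^{p}\,\<-1\>^{\,q},
\]
using $\<1\> = 1$ and $\<-1\>^2 = 1$ in $\GW(k)$, which is the asserted identity. The only step requiring real content beyond the formal yoga of dualizable objects is the identification $\chi^{cat}(T) = \<-1\>$, which I would quote from Morel (the action of the cyclic permutation / switch map on $T^{\wedge n}$), and the rest is bookkeeping with multiplicativity and the shift formula.
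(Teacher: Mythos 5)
Your argument is correct and follows essentially the same route as the paper: both reduce, via multiplicativity of $\chi^{cat}$ under $\wedge$ and the shift formula, to the single computation $\chi^{cat}(T)=\<-1\>$, and both obtain that from Morel's identification $\tau_{T,T}=\times\<-1\>$. The one step you quote rather than prove is the identification of $\chi^{cat}(T)=\ev_T\circ\tau_{T,T^{-1}}\circ\delta_T$ with the class of the symmetry $\tau_{T,T}$ in $\End(\mS_k)$ --- Morel's theorem is only the second half, $\tau_{T,T}=\<-1\>$ --- and this is exactly what the paper's diagram chase (using Voevodsky's lemma that the cyclic permutation of $T^{\wedge 3}$ is the identity) supplies; since that identification is a standard fact about invertible objects in a symmetric monoidal category, your proof stands.
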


\begin{proof}  We have $(\mS_k^{p,q})^\vee=\mS_k^{-p,-q}$, so $\chi_{\SH(k)}(\mS_k^{p,q})=
\chi_{\SH(k)}(\mS_k^{-p,-q})$, reducing us to the case $q\ge0$. Since 
$\Sigma^{p,q}\mS_k\cong (\Sigma^{2q,q}\mS_k)[p-2q]$, we have
\[
\chi_{\SH(k)}(\mS_k^{p,q})=(-1)^p\chi_{\SH(k)}(\mS_k^{2q,q})=
(-1)^p(\chi_{\SH(k)}(\mS_k^{2,1}))^q.
\]
This reduces us to showing that $\chi_{\SH(k)}(\mS_k^{2,1})=\<-1\>$. Since $\mS_k^{2,1}\cong \Sigma^\infty_T(\P^1,\infty)$ we have $\Sigma^\infty_T\P^1_+\cong \mS_k^{2,1}\oplus \mS_k$. Since  $\mS_k$ is the unit in $\SH(k)$, we have $\chi_{\SH(k)}(\mS_k)=\<1\>$, so we need to show that $\chi(\P^1/k)=\<1\>+\<-1\>$.

This is proven by Hoyois \cite[Example 1.7]{HoyoisGL} and also follows from our result with Raksit \cite[Corollary 8.7]{LREulerChar}.
\end{proof}

\begin{remarks}\label{rem:CatTopEuler} 1. The categorical Euler characteristic in an arbitrary symmetric monoidal category is clearly natural with respect to symmetric mo\-no\-idal functors. In particular, if $k=\C$, the image of $\chi(\sX/k)$ for a dualizable space $\sX\in \Spc(\C))$ under the Betti realization functor $Re_B:\SH(\C)\to \SH$ is the Euler characteristic of $Re_B(\sX)$ computed in $\SH$. As the map $\pi_0(\mS)=\End_\SH(\mS)\to \End_{D(\Ab)}(\Z)=\Z$ under the $\Z$-linearization map is an isomorphism, the Euler characteristic in $\SH$ of $\Sigma^\infty T_+$, for a finite CW complex $T$, is just the topological Euler characteristic of $T$. Since $\GW(\C)=\Z$ by rank, we see that, for $k\subset \C$, and for $X\in \Sm_k$, $\rnk\chi(X/k)$ is the topological Euler characteristic of the complex manifold $X(\C)^{an}$ of $\C$-points of $X$.

We have as well the $\R$-Betti realization functor $Re_{B\R}:\SH(\R)\to \SH$, which for $X\in \Sm_\R$ sends the suspension spectrum $\Sigma_T^\infty X_+$ to the suspension spectrum of the real points of $X$, $\Sigma^\infty X(\R)^\an$. We note that the induced map $\GW(\R)\to \End_\SH(\mS)=\Z$ is the signature homomorphism. Indeed, we need only check that $\<-1\>$ goes to $-1$. To see this, the map $\GW(k)\to \End_{\SH(k)}(\mS_k)$ is constructed by sending the one-dimensional form $\<u\>$ to the automorphism  $m_u$ of $\P^1$ given by  $[x_0:x_1]\mapsto [x_0:ux_1]$. On   $\P^1(\R)^\an=S^1$, $m_{-1}$ is the map $\theta\mapsto -\theta$ and hence has degree $-1$.\footnote{I am grateful to Fabien Morel for this argument.} Concretely, for $X\in \Sm_\R$, the rank of $\chi(X/\R)$ is the Euler characteristic of $X(\C)^\an$ and the signature of $\chi(X/\R)$ is the Euler characteristic of $X(\R)^\an$.\\\\
2. For $q\in \GW(\R)$ with signature  $\text{sgn}(q)$, one has $\text{rank}(q)\equiv \text{sgn}(q)\mod 2$. This implies that for $X\in \Sm_\R$, the Euler characteristic of $X(\C)^\an$ and $X(\R)^\an$ are congruent modulo 2. At least for proper $\R$-schemes, this is an easy consequence of the fact (see for example \cite[pg. 76]{Milnor}) that for a compact Riemannian manifold $M$ with an isometry  $f:M\to M$, the fixed point locus $M^f$ has Euler characteristic given by the Lefschetz number 
\[
\chi^{top}(M^f)=\sum_i(-1)^i\Tr(f^*_{|H^i(M,\Q)}).
\]
One applies this to complex conjugation $\mathfrak{c}:X(\C)^\an\to X(\C)^\an$, after decomposing $H^i(X(\C)^\an,\Q)$ into plus and minus eigenspaces for the action of $\mathfrak{c}$, to give the congruence. Probably this argument can be extended without much trouble to the case of open smooth varieties. 

There is also an upper bound for $\chi^{top}(X(\R))$ in terms of the Hodge theory of $X$, due to Abelson \cite{Abelson}, namely, if $X/\R$ is smooth and projective and has even dimension $2n$ over $\R$, then
\[
|\chi^{top}(X(\R)^\an)|\le \dim_\C H^{n,n}(X_\C).
\]
The proof uses the Hodge decomposition, the hard Lefschetz theorem and the Lefschetz fixed point theorem as above.

On the other hand, as mentioned in \cite{LREulerChar}, this last inequality also follows from our theorem with Raksit \cite[Theorem 1.3]{LREulerChar}. In fact, for $X$ smooth and projective of even dimension $2n$ over $k$,  this result shows that $\chi(X/k)$ is of the form $Q+m\cdot H$, where $H$ is the hyperbolic form $H(x,y)=x^2-y^2$, $m$ is an integer and $Q$ is the quadratic form associated to the symmetric bilinear form
\[
H^n(X, \Omega^n_{X/k})\times  H^n(X, \Omega^n_{X/k})\xrightarrow{\cup}
H^{2n}(X, \Omega^{2n}_{X/k})\xrightarrow{\Tr}k,
\]
where $\cup$ is cup product and $\Tr$ is the canonical trace map corresponding to $1\in H^0(X,\sO_X)$ by Serre duality. 
 This shows that for $k\subset \R$, 
\begin{align*}
|\chi^{top}(X(\R)^\an)|&=|\text{sig}(\chi(X/k))|\\
&=|\text{sig}(Q)|\\
&\le \dim_kH^n(X, \Omega^n_{X/k}),
\end{align*}
which recovers Abelson's inequality.
 \end{remarks}

Here are some additional elementary but useful properties of the Euler characteristic $\chi(-/k)$.

\begin{proposition}\label{prop:EulerCharProperties} 1. Let $F$, $X$ and $Y$ be in $\Sm_k$ and let $p:Y\to X$ be a Zariski locally trivial fiber bundle with fiber $F$.  Then
\[
\chi(Y/k)=\chi(X/k)\cdot\chi(F/k).
\]
2. Let $X$ be in $\Sm_k$ and let $p:V\to X$ be a rank $r$ vector bundle. Then the Thom space $\Th(V)$ is dualizable and
\[
\chi(\Th(V)/k)=\<-1\>^r\chi(X/k).
\]
3. Let $X$ be in $\Sm_k$, let $j:U\to X$ be an open subscheme with closed complement $i:Z:=X\setminus U\to X$. Suppose that $Z$ is smooth over $k$ and of pure codimension $c$ in $X$. Then
\[
\chi(X/k)=\chi(U/k)+\<-1\>^c\chi(Z/k).
\]
4. Let $X$ be in $\Sm_k$ and let $p:V\to X$ be a rank $r$ vector bundle. Let $q:\P(V)\to X$ be the associated projective space bundle $\Proj_X(\Sym^*V^\vee)$. Then  
\[
\chi(\P(V)/k)=r_\epsilon\cdot\chi(X/k).
\]
5. Let $i:Z\to X$ be a codimension $c$ closed immersion in $\Sm_k$.   Let $\tilde X$ be the blow up of $X$ along $Z$. Then 
\[
\chi(\tilde X/k)=\chi(X/k)+\<-1\>\cdot (c-1)_\epsilon\cdot \chi(Z/k).
\]
6. Let $\sigma:k\to F$ be an extension of fields, inducing the homomorphism $\sigma_*:\GW(k)\to \GW(F)$. Then for   $X\in \Sm_k$,   
\[
\chi(X_F/F)=\sigma_*(\chi(X/k)).
\]
\end{proposition}

\begin{proof}  (1) Take a finite Zariski open cover $\sU=\{U_i\}$ of $X$ that trivializes the bundle $Y\to X$. Since
\[
\chi((U_{i_0}\cap\ldots\cap U_{i_n})\times F/k)=
\chi(U_{i_0}\cap\ldots\cap U_{i_n}/k)\cdot \chi(F/k)
\]
the additivity of $\chi$ in distinguished triangles together with the Mayer-Vietoris triangles for $\sU$ and for $\sV:=\{U_i\times F\}$ shows that 
\[
\chi(Y/k)=\chi(X/k)\cdot\chi(F/k).
\]

For (2), the distinguished triangle
\[
\Sigma^\infty_T (V\setminus 0_X)_+\to \Sigma^\infty_T V_+\to
\Sigma^\infty_T \Th(V)\to
\]
shows that $\Th(V)$ is dualizable and gives
\[
\chi(\Th(V)/k)=\chi(X/k)-\chi(V\setminus 0_X/k)
\]
Since $p:V\to X$ is Zariski locally trivial, so is $V\setminus 0_X\to X$, so 
\[
\chi(V\setminus 0_X/k)=
\chi(\A^r\times X\setminus 0_X/k)=\chi(\A^r\setminus 0/k)\cdot \chi(X/k).
\]
Since $\Sigma^\infty_T\A^r\setminus 0\cong \mS^{2r-1,r}_k\oplus \mS_k$, we have
\[
\chi(\A^r\setminus 0/k)=\<1\>-\<-1\>^r
\]
by Lemma~\ref{lem:EulerSphere}.  Thus 
\[
\chi(\Th(V)/k)=\chi(X/k)-(\<1\>-\<-1\>^r)\cdot \chi(X/k)=\<-1\>^r\cdot \chi(X/k)
\]

For (3),  we have the cofiber sequence $U_+\to X_+\to X/U$. The Morel-Voevodsky homotopy purity theorem \cite[Theorem 3.2.23]{MorelVoevodsky} gives the isomorphism $X/U\cong \Th(N_{Z/X})$ in the unstable pointed motivic homotopy category $\sH_\bullet(k)$, where $N_{Z/X}$ is the normal bundle of $Z$ in $X$. This gives us the distinguished triangle in $\SH(k)$
\[
\Sigma^\infty_TU_+\to \Sigma^\infty_TX_+\to \Sigma^\infty_T\Th(N_{Z/ X})\to 
\]
hence by (2), $\chi(X/k)=\chi(U/k)+\chi(\Th(N_{Z\subset X})/k)=\chi(U/k)+\<-1\>^c\chi(Z/k)$.

For (4),  (1) reduces us to the computation of $\chi(\P^{r-1}/k)$. Letting $U=\A^{r-1}_k\subset \P^{r-1}_k$ with complement $Z\cong \P^{r-2}_k$, (3) gives the identity $\chi(\P^{r-1}/k)=\<1\>+\<-1\>\cdot \chi(\P^{r-2}_k)$, so (4) follows by induction  in $r$.

For (5), let $p:N_Z\to Z$ be the normal bundle of $i$ and let $E\subset \tilde{X}$ be the exceptional divisor, so $E=\P(N_Z)$. Let   $U=X\setminus Z=\tilde{X}\setminus E$. By (2), (3) and (4), we have
\begin{multline*}
\chi(X/k)-\<-1\>^c\chi(Z/k)=\chi(\tilde{X}/k)-\<-1\>\cdot\chi(\P(N_Z)/k)\\=
\chi(\tilde{X}/k)-\<-1\>\cdot c_\epsilon\chi(Z/k)
\end{multline*}
which proves (5). 

For (6), let $\pi:\Spec F\to \Spec k$ be the morphism induced by $\sigma$. Then we have the exact symmetric monoidal  functor $\pi^*:\SH(k)\to \SH(F)$, with $\pi^*\Sigma^\infty_TX_+=\Sigma^\infty_TX_{F+}$. Moreover the map  $\pi^*:\End_{\SH(k)}(\mS_k)\to \End_{\SH(F)}(\mS_F)$ is equal to the map $\sigma_*:\GW(k)\to \GW(F)$ via Morel's identification $\End_{\SH(k)}(\mS_k)\cong \GW(k)$, $\End_{\SH(F)}(\mS_F)\cong \GW(F)$; this is clear from the definition of Morel's map on the generators of $\GW(k)$, sending the rank one form $\<u\>$, $u\in k^\times$, to the endomorphism of $\mS_k$ induced by the endomorphism of $\P^1_k$ sending $(x:y)$ to $(x:uy)$. Since $\pi^*$ is compatible with duality, these facts prove (6).
\end{proof}

One has a simple expression for the Euler characteristic of a smooth cellular scheme. Recall that a reduced finite type $k$-scheme $X$ is {\em cellular} if $X$ admits a filtration
\[
\0=X_{-1}\subset X_0\subset\ldots\subset X_n=X
\]
with $X_i\setminus X_{i-1}$ a disjoint union of affine spaces $\A^i_k$. $X_i$ is called the $i$-skeleton of the filtration.

We recall the following result of Hoyois' (private communication). 

\begin{proposition}\label{prop:Cellular} Let $X$ be a smooth cellular $k$-scheme of dimension $n$ with $i$ skeleton $X_i$. Suppose that $X_i\setminus X_{i-1}$ is the disjoint union of $m_i$ copies of $\A^i$. Then $X$ is dualizable and 
\[
\chi(X/k)=\sum_{i=0}^n m_i\<-1\>^{n-i}.
\]
\end{proposition}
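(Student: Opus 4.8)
The plan is to proceed by induction on the dimension $n$, using the cellular filtration and the additivity of $\chi^{cat}$ in distinguished triangles. The key geometric input is a purity/Gysin distinguished triangle relating the suspension spectrum of the top skeleton $X = X_n$, that of the $(n-1)$-skeleton $X_{n-1}$, and the Thom space of the normal bundle of the open cells $X \setminus X_{n-1} = \coprod_{j=1}^{m_n} \A^n$. Concretely, $U := X \setminus X_{n-1}$ is a smooth open subscheme of $X$, its complement is the closed subscheme $X_{n-1}$; but $X_{n-1}$ need not be smooth, so one cannot apply the Morel--Voevodsky purity theorem directly to the pair $(X, X_{n-1})$. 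Instead I would run the argument the other way: stratify by the locally closed cells and build up $X$ from $X_{-1} = \emptyset$ by successively attaching the cells of $X_i \setminus X_{i-1}$, using at each stage the homotopy cofiber sequence
\[
\Sigma^\infty_T (X_{i-1})_+ \to \Sigma^\infty_T (X_i)_+ \to \Sigma^\infty_T (X_i / X_{i-1}).
\]
The main point is then to identify $X_i / X_{i-1}$ in $\SH(k)$ with $\bigvee_{j=1}^{m_i} \Sigma^\infty_T \Th_{\A^i}(\nu_j)$ for the normal bundles $\nu_j$ of the cells inside $X_i$; since each cell is an affine space $\A^i$, every vector bundle on it is dualizably locally trivial (indeed trivial, but one only needs the weaker statement), so by Proposition~\ref{prop:EulerCharProperties}(2), $\chi^{cat}(\Th_{\A^i}(\nu_j)) = \<-1\>^{\rnk \nu_j} \chi^{cat}(\A^i) = \<-1\>^{n-i}$, using that $\rnk \nu_j = n - i$ and that $\chi^{cat}(\A^i) = \chi^{cat}(\Spec k) = \<1\>$ by homotopy invariance.

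The identification of the subquotient $X_i / X_{i-1}$ with a wedge of Thom spaces is where the cellular structure must be used carefully. The correct statement is the homotopy-purity / localization isomorphism for the open immersion $U_i := X_i \setminus X_{i-1} \hookrightarrow X_i$ with closed complement $X_{i-1}$: this gives a cofiber sequence $\Sigma^\infty_T(X_{i-1})_+ \to \Sigma^\infty_T(X_i)_+ \to \Sigma^\infty_T(X_i/X_{i-1})$ with no smoothness hypothesis on $X_{i-1}$ (localization for the Nisnevich-local model structure), and separately, the Morel--Voevodsky purity theorem applied to the smooth closed immersion $U_i \hookrightarrow X_i$ (which \emph{is} a closed immersion of smooth schemes once one knows $U_i$ is smooth and closed in $X_i$ — here $U_i$ is open in $X_i$, not closed, so this is the subtle point). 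I would instead argue as follows: the complement $X_i \setminus X_{i-1}$ being a disjoint union of affine spaces which are \emph{closed} in the \emph{open} subscheme $X_i \setminus (X_{i-1} \setminus (\text{lower closure issues}))$ — more cleanly, one shows by descending induction that $X_i / X_{i-1}$ is already equivalent to the cofiber of $\Sigma^\infty_T(X_{i-1})_+ \to \Sigma^\infty_T(X_i)_+$, and that this cofiber, being supported on the affine-space cells, is a wedge of suspensions $T^{\wedge i}$, one for each cell, using that $\A^i / \emptyset \simeq \A^i_+$ and the Thom space of a trivial rank-$(n-i)$ bundle over $\A^i$ — wait, this requires knowing the normal bundle is trivial, which it is on an affine space but whose triviality I would instead circumvent by invoking dualizable local triviality and Proposition~\ref{prop:EulerCharProperties}(2) directly on $X_i/X_{i-1}$.

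Granting that $\chi^{cat}(X_i/X_{i-1}) = \sum_{j=1}^{m_i}\<-1\>^{n-i} = m_i \<-1\>^{n-i}$, the result follows by the additivity of $\chi^{cat}$ in the distinguished triangles above: $\chi^{cat}(X_i)_+) = \chi^{cat}((X_{i-1})_+) + \chi^{cat}(X_i/X_{i-1})$, and summing from $i = 0$ to $n$ (with $\chi^{cat}(\emptyset) = 0$) yields $\chi^{cat}(X) = \sum_{i=0}^n m_i \<-1\>^{n-i}$. Dualizability of $X$ follows along the way, since each $X_i$ is built from dualizable pieces via distinguished triangles and the dualizable objects form a thick subcategory. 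I expect the main obstacle to be the precise justification that $X_i/X_{i-1}$ splits as a wedge over the cells with the claimed Thom-space summands despite $X_{i-1}$ being singular: one must combine the localization cofiber sequence (valid without smoothness of the closed subscheme) with a homotopy-purity identification of the resulting cofiber, organized as a secondary descending induction on the cells within a single skeleton, or alternatively appeal to the fact that a cellular scheme is an iterated cell attachment in the Morel--Voevodsky sense so that the whole filtration realizes $\Sigma^\infty_T X_+$ as a finite cell spectrum with one $T^{\wedge i}$-cell per $i$-cell of $X$ (and then rank/Thom-twist bookkeeping gives the $\<-1\>^{n-i}$ weights). Everything else is a formal consequence of Proposition~\ref{prop:EulerCharProperties} and the additivity of $\chi^{cat}$.
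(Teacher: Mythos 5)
You have correctly located the difficulty — the intermediate skeleta $X_{i-1}$ are in general singular, so Morel--Voevodsky purity does not apply to the pairs $(X_i, X_{i-1})$ — but your proposed resolution does not close the gap. Running the induction \emph{upward} through the skeleta forces you to work with the schemes $X_i$ for $i<n$, which are not objects of $\Sm/k$: neither the localization cofiber sequence nor homotopy purity is available for them, so the triangle $\Sigma^\infty_T(X_{i-1})_+\to\Sigma^\infty_T(X_i)_+\to\Sigma^\infty_T(X_i/X_{i-1})$ has no computable third term. Moreover your identification of $X_i/X_{i-1}$ with a wedge of Thom spaces of rank-$(n-i)$ bundles is dimensionally incoherent: the cells of $X_i\setminus X_{i-1}$ are open and dense in $X_i$, which has dimension $i$, so there is no rank-$(n-i)$ normal bundle in sight inside $X_i$; the codimension $n-i$ only appears when the cell is viewed inside the top-dimensional smooth scheme $X$. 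You acknowledge this ("wait, this requires...", "I expect the main obstacle to be...") but never supply the missing argument, and the fallback appeal to "iterated cell attachment in the Morel--Voevodsky sense" is exactly the assertion that needs proof.

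The fix is to run the induction \emph{downward} on the index $d$ of the lowest nonempty skeleton. The stratum $X_d$ is closed in $X$ (it is a skeleton) and, because $X_{d-1}=\0$, it equals $\amalg^{m_d}\A^d$ and hence is smooth; so $X_d\hookrightarrow X$ is a closed immersion of smooth schemes inside the smooth $X$, and purity gives $X/(X\setminus X_d)\cong\Th(N_{X_d})$. By Quillen--Suslin the normal bundle on $\amalg^{m_d}\A^d$ is trivial of rank $n-d$, so $\Th(N_{X_d})\cong\Sigma^{n-d}_T\mS_k^{\oplus m_d}$, contributing $m_d\<-1\>^{n-d}$ by Lemma~\ref{lem:EulerSphere}. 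The open complement $U=X\setminus X_d$ is again a smooth cellular scheme with strictly larger minimal skeleton index, so the inductive hypothesis applies to it, and additivity of $\chi^{cat}$ in the localization triangle $\Sigma^\infty_TU_+\to\Sigma^\infty_TX_+\to\Sigma^{n-d}_T\mS_k^{\oplus m_d}$ finishes both the dualizability claim and the formula. Your bookkeeping of the weights $\<-1\>^{n-i}$ and the use of additivity are otherwise correct.
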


\begin{proof} Let $d$ be the minimum $i$ such that $X_i\neq\0$; the proof is by downward induction on $d$. If $d=n$, then $X=\amalg^{m_n}\A^n$, which is isomorphic in $\sH(k)$ to $\amalg^{m_n}\Spec k$, so $\chi(X/k)=m_n\cdot \<1\>$, proving the assertion in this case. If $d<n$, apply the induction hypothesis to $U:=X\setminus X_d$, which gives
\[
\chi(U/k)=\sum_{i=d+1}^n m_i\<-1\>^{n-i}.
\]
By Proposition~\ref{prop:EulerCharProperties}(3), we have
\[
\chi(X/k)=\chi(U/k)+\<-1\>^{n-d}\chi(\amalg^{m_d}\A^d)=\sum_{i=d}^n m_i\<-1\>^{n-i}.
\] 
\end{proof}

\begin{exs} 1. As a simple example, Proposition~\ref{prop:Cellular} gives another proof that
\[
\chi(\P^n_k/k)= (n+1)_\epsilon .
\]
2. Let $X$ be a Severi-Brauer variety over $k$ of  dimension $n$. The Euler characteristic of Severi-Brauer varieties have been computed by Hoyois (private communication). Using his quadratic refinement of the  Lefschetz trace formula \cite[Theorem 1.3]{HoyoisGL} and the fact that for a central simple algebra $\sA$ over $k$,  $\SL_1(\sA)$ is $\A^1$-connected, he shows
\[
\chi(X/k)=\chi(\P^n_k/k).
\]
In fact, the case of even $n$ follows from the fact that $X$ is split by a separable field extension $k\subset F$ of odd degree and  $\GW(k)\to \GW(F)$ is injective if $[F:k]$ is odd and $F/k$ is separable.

If $X$ has odd dimension $n$, then by Corollary~\ref{cor:OddEulerChar} below and the fact that 
$\chi(X/k)$ and $\chi(\P^n_k/k)$ have the same rank, we have
\[
\chi(X/k)=\frac{n+1}{2}\cdot H=\chi(\P^n_k/k).
\]
\end{exs}

\section{$\SL$--oriented and $\GL$--oriented theories}\label{sec:SLOrient}
We recall some basic facts about $\SL$-oriented and $\GL$-oriented ring spectra, Thom isomorphisms, and other related notions; for details, we refer the reader to  \cite{Anan, Anan19}. We also introduce the theories we will be using here: $\EM(\sK^{MW}_*)$,  $\EM(\sW_*)$,
 $\EM(\sK^{M}_*)$, representing the cohomology of the sheaves of Milnor-Witt $K$-theory, the Witt sheaf and the sheaves of Milnor $K$-theory, respectively. We will also discuss hermitian $K$-theory, represented by $\KO\in \SH(k)$, and Quillen $K$-theory, represented by $\KGL\in \SH(k)$.

For a commutative ring spectrum $E\in \SH(k)$ an {\em $\SL$-orientation}  is the assignment of a Thom class $\th(V,\rho)\in E^{2r, r}(\Th(V))$ for each pair $(V,\rho)$ consisting of a rank $r$ vector bundle $V\to X$, $X\in \Sm_k$, and an isomorphism $\rho:\det V\xrightarrow{\sim} \sO_X$, such that this assignment satisfies the axioms of \cite[Definition 3.3]{Anan19}. A commutative ring spectrum $E$ together with an $\SL$-orientation is called an $\SL$-oriented ring spectrum. 

Similarly, a choice of Thom classes  $\th(V)\in E^{2r, r}(\Th(V))$ for each rank $r$ vector bundle $V\to X$, satisfying the axioms of \cite[Definition 1.9]{Panin} for the associated Thom isomorphisms, is a  {\em $\GL$-orientation},  or simply, an orientation, for $E$. An oriented theory is automatically $\SL$-oriented; in the case of a $\GL$-orientation,  the Thom class is independent of the choice of isomorphism $\det V\cong \sO_X$. 

For $W\subset Y$ a closed subset of some $Y\in \Sm_k$, and $E\in \SH(k)$, one defines $E^{a,b}_W(Y):=E^{a.b}(Y/(Y\setminus W))$. For $V\to X$ a  rank $r$ vector bundle, let $\det V$ denote the line bundle $\Lambda^rV$ and write $\det^{-1}V$ for the dual of  $\det V$. The rank $r+1$ vector bundle $V\oplus \det^{-1}V$ has a canonical isomorphism $\can_V:\det(V\oplus \det^{-1}V)\xrightarrow{\sim} \sO_X$. For $L\to X$ a line bundle with zero-section $s_0:X\to L$, we define 
\[
E^{a,b}(X, L):=E^{a+2, b+1}(\Th(L))= E^{a+2, b+1}_{s_0(X)}(L)
\]
and for $Z\subset X$ a closed subset
\[
E^{a,b}_Z(X, L):= E^{a+2, b+1}_{s_0(Z)}(L).
\]

For an $\SL$-oriented ring spectrum $E$, an $X\in \Sm_k$ with closed subset $Z$ and rank $r$ vector bundle $p:V\to X$ with zero-section $s_0:X\to V$,   the Thom class $\th(V\oplus \det^{-1}V,\can_V)$ induces the {\em Thom isomorphism}
\begin{equation}\label{eqn:ThomIso}
\vartheta_V:E^{a,b}_Z(X)\xrightarrow{\sim}E^{a+2r, b+r}_{s_0(Z)}(V, p^*\det^{-1}(V))
\end{equation}
The {\em canonical Thom class} $\th(V)\in E^{2r, r}_{s_0(X)}(V, p^*\det^{-1}(V))$ is defined as $\vartheta_V(1_X)$, where $1_X\in E^{0,0}(X)$ is the unit. The Thom isomorphism satisfies
\[
\vartheta_V(x)=p^*(x)\cup \th(V),
\]
where $\cup$ is the cup product 
\[
E^{a,b}_{p^{-1}(Z)}(V)\times E^{2r, r}_{s_0(X)}(V, p^*\det^{-1}(V))\to
E^{a+2r, b+r}_{s_0(Z)}(V, p^*\det^{-1}(V)).
\]

Using the Thom isomorphisms and the six-functor formalism, one has functorial pushforward maps 
\[
f_*:E^{a,b}(X, \omega_{X/k}\otimes f^*(L))\to E^{a-2d,b-d}(Y, \omega_{Y/k}\otimes L) 
\]
for each proper map $f:X\to Y$ in $\Sm_k$ of relative dimension $d$. For $f$ the zero-section $s_0:X\to V$, as above, $s_{0*}(1_X)\in E^{2r,r}(V, p^*\det^{-1}V)$ is the image of $\th(V)$ under the ``forget the supports" map $E^{2r, r}_{s(X)}(V, p^*\det^{-1}(V))\to E^{2r, r}(V, p^*\det^{-1}(V))$

Applying this to the zero-section $s_0:X\to V$ for a rank $r$ vector bundle, one arrives at the Euler class
\[
e(V):=s_0^*s_{0*}(1_X)=s_0^*(\th(V))\in E^{2r, r}(X,\det^{-1}V)
\]
For $s:X\to V$ an arbitrary section, if $Z\subset X$ is a closed subset containing the closed subset $s^{-1}(s_0(X))$, we have the Euler class with supports
\[
e_Z(V, s):=s^*(\th(V))\in E^{2r, r}_Z(X,\det^{-1}V)
\]
mapping to $e(V)$ under $E^{2r, r}_Z(X,\det^{-1}V)\to E^{2r, r}(X,\det^{-1}V)$.

Before discussing the particular theories we will need, we recall some basic notions concerning {\em homotopy modules}. This setting will enable us to unify a number of arguments across different  cohomology theories. We refer the reader to \cite[\S 5]{MorelICTP},  \cite[Chapter 5]{MorelA1}, \cite{Feld} for details.

For $\sF$ a strictly $\A^1$-invariant Nisnevich sheaf on $\Sm_k$, we have the strictly $\A^1$-invariant sheaf $\sF_{-1}:=\sHom(\G_m, \sF)$. Recall \cite[Defintion 5.2.4]{MorelICTP} that a homotopy module is a sequence $(M_n)_{n\ge0}$ of strictly $\A^1$-invariant Nisnevich sheaves on $\Sm_k$ together with isomorphisms $\delta_n:M_n\to (M_{n+1})_{-1}$. For $n<0$, define $M_n$ inductively as $M_n:=(M_{n+1})_{-1}$. We let $\HM(k)$ denote the category of homotopy modules.

With the evident notion of morphism, $\HM(k)$ forms an abelian category. Via \cite[Theorem 5.2.6]{MorelICTP}, there is an equivalence from the category of homotopy modules on $\Sm_k$ to the heart of the homotopy $t$-structure on $\SH(k)$, which we denote by $M_*\mapsto \EM(M_*)$. For a homotopy module $M_*$, the corresponding cohomology theory $\EM(M_*)^{*,*}$ satisfies $\EM(M_*)^{a,b}(X)=H^{a-b}_\Nis(X, M_b)=H^{a-b}_\Zar(X, M_b)$.  Conversely, for $E\in \SH(k)$, the corresponding homotopy module is $\tau^t_0(E):=(\pi_{-n,-n}(E))_n$. 

The primary example of a homotopy module is given by the Milnor-Witt sheaves $\sK^{MW}_*$, about which we recall a few facts. For a field $F$, the {\em Milnor-Witt $K$-theory of $F$}, $K^{MW}_*(F)$, is the $\Z$-graded  $\Z$-algebra with generators $[u]\in K^{MW}_1(F)$, for each unit $u\in F^\times$ and an additional generator $\eta\in K^{MW}_{-1}(F)$, with relations given in \cite[Definition 6.3.1]{MorelICTP}. As explained in \cite[\S3.2]{MorelA1}, this construction extends to a Nisnevich sheaf $\sK^{MW}_*$ on $\Sm_k$, with stalk $K^{MW}_*(k(X))$ at the generic point $\eta_X\in X\in \Sm_k$. For a field $F$, sending the rank one quadratic form $\<u\>$, $u\in F^\times$, to the element $\<u\>:=1+[u]\eta\in K^{MW}_0(F)$ extends uniquely to an isomorphism of rings $\GW(F)\to K^{MW}_0(F)$ (see \cite[Lemma 6.3.8]{MorelICTP}). The Hopkins-Morel presentation of $K^{MW}_*(F)$ mentioned above extends to an analogous presentation of the sheaf $\sK^{MW}_*$ \cite[Definition 5.1]{GSZ} and Morel's isomorphism $\GW(F)\xrightarrow{\sim} K^{MW}_0(F)$ extends to an isomorphism of Nisnevich sheaves $\sGW\xrightarrow{\sim} \sK^{MW}_0$ \cite[Theorem 6.3]{GSZ} (assuming $k$ is infinite).

As explained in \cite[\S6]{MorelICTP}, the sheaf $\sK^{MW}_*$ defines a homotopy module on $\Sm_k$; in particular (we omit the ${}_\Nis$ and ${}_\Zar$ from the notation)
\[
\EM(\sK^{MW}_*)^{a,b}(X)=H^{a-b}(X, \sK^{MW}_b).
\]
Specifically, Morel's theorem identifying $\sK^{MW}_n$ with $\pi_{-n,-n}(\mS_k)$ shows that $(\sK^{MW}_n)_n=\tau_0(\mS_k)$. As $\mS_k$ is the unit in $\SH(k)$, this shows that for an arbitrary homotopy module $(M_n)_n$, $M_n$ is canonically a sheaf of $\sK^{MW}_0$-modules.

One also has a sheaf-theoretic description of the $L$-twisted theory for $L$ a line bundle on $X\in \Sm_k$. The  multiplication action of $\sK^{MW}_0$ on $\sK^{MW}_n$ induces an action of the sheaf of units $\sK^{MW\times}_0$ on $\sK^{MW}_n$. Sending a unit $u\in \sO_{X,x}^\times$ to  $\<u\>\in \sK^{MW}_0(\sO_{X,x})$ defines a homomorphism of sheaves of abelian groups $\G_m\to \sK^{MW\times}_0$. For $L\to X$ a line bundle, the action of $\sO_X^\times$ on $L$ makes $L$ into a $\Z[\sO_X^\times]$-module, and similarly the sheaf $\sK^{MW}_n$ on $X$ is a $\Z[\sO_X^\times]$-module. The twisted sheaf $\sK^{MW}_n(L)$ on $X$ is defined as
\[
\sK^{MW}_n(L):=\sK^{MW}_n\otimes_{\Z[\sO_X^\times]}L=\sK^{MW}_n\otimes_{\sK^{MW}_0}\sK^{MW}_0(L).
\]
See \cite[Section 1.2]{CalmesFasel} for details. For an arbitrary homotopy module $M_*$, we thus have the $L$-twisted version $M_n(L):=M_n\otimes_{\sK^{MW}_0}\sK^{MW}_0(L)$, defining the homotopy module $M_*(L)$.

For $X\in \Sm_k$ and $M_*$ a homotopy module, we have the {\em Rost-Schmid complex}
 (see \cite[\S5]{MorelA1} for details)
\begin{multline*}
C^*(X, M_n, L):=\oplus_{x\in X^{(0)}}i_{x*}M_n(k(x), i_x^*L\otimes\det^{-1}\mathfrak{m}_x/\mathfrak{m}_x^2)\\\to \oplus_{x\in X^{(1)}}i_{x*}M_{n-1}(k(x),i_x^*L\otimes\det^{-1}\mathfrak{m}_x/\mathfrak{m}_x^2)\to\\\ldots\to \oplus_{x\in X^{(p )}}i_{x*}M_{n-p}(k(x), , i_x^*L\otimes\det^{-1}\mathfrak{m}_x/\mathfrak{m}_x^2)\to\ldots
\end{multline*}
and a canonical isomorphism $H^p(X, M_n(L))=H^p(\Gamma(X, C^*(X, M_n, L))$. More generally, for $Z\subset X$ a closed subset, the part of $C^*(X, M_n, L)$ supported in $Z$,
\begin{multline*}
C^*_Z(X, M_n, L):=\oplus_{x\in X^{(0)}\cap Z}i_{x*}M_n(k(x), i_x^*L\otimes\det^{-1}\mathfrak{m}_x/\mathfrak{m}_x^2)\\\to \oplus_{x\in X^{(1)}\cap Z}i_{x*}M_{n-1}(k(x), i_x^*L\otimes\det^{-1}\mathfrak{m}_x/\mathfrak{m}_x^2)\to\\\ldots\to \oplus_{x\in X^{(p)}\cap Z}i_{x*}M_{n-p}(k(x),  i_x^*L\otimes\det^{-1}\mathfrak{m}_x/\mathfrak{m}_x^2)\to\ldots,
\end{multline*}
computes $H^p_Z(X, M_n(L))$ as $H^p(\Gamma(X, C^*_Z(X, M_n, L))$. 

Feld \cite{Feld} defines a category of {\em Milnor-Witt cycle modules} and shows in \cite[Theorem 4.2]{Feld2} that this category is equivalent to the category of homotopy modules. Via this equivalence the {\em Milnor-Witt complex} defined in \cite[\S 3.1]{Feld2} goes over the the Rost-Schmid complex; we will state and various results proven about the Milnor-Witt complexes for the corresponding Rost-Schmid complex without mentioning this correspondence explicitly. For example, the isomorphism $H^p_Z(X, M_n(L))\cong H^p(\Gamma(X, C^*_Z(X, M_n, L))$ stated above is a consequence of the acyclicity theorem \cite[Theorem 8.1]{Feld} for Milnor-Witt cycle modules.

For $Z\subset X$ a smooth closed subscheme of codimension $c$ with normal bundle $N_{Z/X}$,  the evident isomorphism 
\[
C^*_Z(X, M_n, L)\cong C^*(Z, M_{n-c}, i_Z^*L\otimes \det N_{Z/X})[-c]
\]
gives rise to the purity isomorphism \cite[Remarque 9.3.5]{FaselCW} 
\[
H^p_Z(X, M_n(L))\cong H^{p-c}(Z, M_{n-c}(i_Z^*L\otimes \det N_{Z/X})).
\]
More generally, for $W\subset Z\subset X$ closed, we have 
\begin{equation}\label{eqn:PurityIso}
H^p_W(X, M_n(L))\cong H^{p-c}_W(Z, M_{n-c}(i_Z^*L\otimes \det N_{Z/X})).
\end{equation}
It follows directly from the construction that these purity isomorphisms are functorial with respect to compositions of closed immersions. 

For $Z$ a closed subset of codimension $\ge c$, the complex $C^*_Z(X, M_n, L)$ is 0 in degrees $<c$, hence
\begin{equation}\label{eqn:CohVanishing}
H^p_Z(X, M_n(L))=0\text{ for } p< \codim_X Z.
\end{equation}

The  natural isomorphisms $\EM(M_*)^{a+b,b}_Z(X)\cong H^a_Z(X, M_b)$ extend to the twists by a line bundle $L\to X$. To see this, we have $\EM(M_*)^{a+b,b}_Z(X, L):=\EM(M_*)^{a+b+2,b+1}_{s_0(Z)}(L)$, with $s_0:X\to L$ the 0-section. The purity isomorphism
\[
H^{a+1}_{s_0(Z)}(L, M_{b+1})\cong H^a_Z(X, M_b(L))
\]
thus gives the isomorphisms
\begin{multline}\label{eqn:EMTwistedIso}
\EM(M_*)^{a+b,b}_Z(X, L):=\EM(M_*)^{a+b+2,b+1}_{s_0(Z)}(L)\\\cong H^{a+1}_{s_0(Z)}(L, M_{b+1})\cong H^a_Z(X, M_b(L))
\end{multline}
as claimed.

 For $L\to X$ a line bundle, the isomorphism $\sGW\to \sK^{MW}_0$ of sheaves on $X$ extends to an isomorphism $\sGW(L)\to \sK^{MW}_0(L)$, where $\sGW(L)$ is the sheaf of (virtual) $L$-valued non-degenerate quadratic forms. For $L'\to X$ a second line bundle, we have the isomorphism 
$\psi_{L'}:\sGW(L)\to \sGW(L\otimes L^{\prime\otimes 2})$
defined as follows: if $q:V\to L$ is an $L$-valued non-degenerate quadratic form, then $\psi_{L'}(q)$ is the induced form $V\otimes L\to L\otimes L^{\prime\otimes 2}$, which in local coordinates is given by $\psi_{L'}(q)(v\otimes\lambda)=q(v)\otimes\lambda^2$. Via the description of 
$M_n(L)$ as $M_n\otimes_{\sK^{MW}_0}\sGW(L)$, the isomorphism $\psi_{L'}:\sGW(L)\to \sGW(L\otimes L^{\prime\otimes 2})$ defines an isomorphism 
 \begin{equation}\label{eqn:SquareTwistIso}
 \psi_{L'}:M_*(L)\to M_*(L\otimes L^{\prime\otimes 2})
 \end{equation}
 of homotopy modules.

 Similarly, an isomorphism of line bundles $\rho:L\to L'$ induces an isomorphism of sheaves 
 \begin{equation}\label{eqn:TwistIso}
 \rho_*:
M_n(L)\to M_n(L').
\end{equation}
and a corresponding isomorphism on cohomology with supports
\[
\rho_*:H^p_Z(X, M_n(L))\to H^p_Z(X, M_n(L'))
\]

Let $X$ be in $\Sm_k$ with closed subset $Z$. Via the canonical isomorphism  $\EM(\sK^{MW}_*)^{a+b,b}(-)\cong H^{a-b}(-, \sK^{MW}_b)$, the suspension isomorphism 
\begin{multline*}
\EM(\sK^{MW}_*)^{a+b,b}(X/(X\setminus Z))\\
\xrightarrow{\alpha_n} \EM(\sK^{MW}_*)^{a+b+2n,b+n}((\A^n/(\A^n\setminus\{0\})\wedge X/(X\setminus Z))
\end{multline*}
transforms to the isomorphism
\begin{equation}\label{eqn:Suspension}
\alpha_n:H^a_Z(X, \sK^{MW}_b)\xrightarrow{\sim} H^{a+n}_{0\times Z}(\A^n\times X, \sK^{MW}_{b+n})
\end{equation}

\begin{lemma}\label{lem:PuritySuspension} The suspension isomorphism \eqref{eqn:Suspension} is equal to the inverse of the purity isomorphism $\beta_n:H^{a+n}_{0\times Z}(\A^n\times X, \sK^{MW}_{b+n})\xrightarrow{\sim} H^a_Z(X, \sK^{MW}_b)$.
\end{lemma}

\begin{proof} Since the suspension isomorphism  for some $n\ge1$ is the composition of suitable suspension isomorphisms for $n=1$, and the same holds for the purity isomorphism, we reduce to the case $n=1$.

We first handle the case $X=\Spec F$, $F$ a finitely generated field extension of $k$.

The suspension isomorphism $\alpha_1$ relies on the bonding isomorphism $\epsilon_b:\sK^{M}_b\to (\sK^{MW}_{b+1})_{-1}$ as follows: Letting $\G_m$ be the pointed scheme $(\A^1\setminus\{0\}, \{1\})$, we have 
\begin{multline*}
(\sK^{MW}_{b+1})_{-1}(-)=\sK^{MW}_{b+1}(\G_m\wedge (-)_+)\\=
\ker(\sK^{MW}_{b+1}(\A^1\setminus\{0\}\times (-))\xrightarrow{i_1^*} 
\sK^{MW}_{b+1}( 1\times (-))
\end{multline*}
so $\epsilon_b$ induces the map $\tilde{\epsilon}_{b,F}:K^{MW}_b)(F)\to  \sK^{MW}_{b+1}(\A^1_F\setminus\{0\})$ by  $\tilde{\epsilon}_b(x):=[t]\cdot x$. We can pass from the $\G_m$-loops to the $\P^1$-loops via the standard affine cover of $\P^1$, $\sU:=\{U_0, U_1\}$, $U_i=\P^1\setminus\{X_i=0\}$, giving the pushout diagram
\[
\xymatrix{
U_0\cap U_1\ar[r]\ar[d]&U_1\ar[d]\\
U_0\ar[r]&\P^1
}
\]
We view $U_0\cap U_1$ as the open subset $U_0\setminus \{(1:0)\}\cong \A^1\setminus\{0\}$. 
The map $\tilde{\epsilon}_b$ induces the map
\[
\xi_{b,F}:K^{MW}_b(F)\to H^1(\P^1_F, \sK^{MW}_{b+1})
\]
by sending $x\in K^{MW}_b(F)$ to the image of the \v{C}ech 1-cocycle $(\tilde{\epsilon}_b, U_0\cap U_1)$.  We have the isomorphisms
\begin{equation}\label{eqn:ExcisionIsos}
 H^1_{0}(\A^1_F, \sK^{MW}_{b+1})\cong H^1_0(\P^1_F, \sK^{MW}_{b+1})\cong H^1(\P^1_F, \sK^{MW}_{b+1})
 \end{equation}
the first being excision, and the second following from the strict $\A^1$-homotopy invariance of 
$\sK^{MW}_{b+1}$, so we may consider $\xi_{b, F}$ as a map
\[
\xi_{b,F}:K^{MW}_b(F)\to H^1_0(\A^1_F, \sK^{MW}_{b+1}),
\]
and this is the suspension isomorphism for $X=\Spec F$.

Via the Rost-Schmid complex, we have the isomorphism
\[
H^1_0(\A^1_F, \sK^{MW}_{b+1})\cong K^{MW}_b(F, F\cdot\del/\del t)
\]
and via this isomorphism, $\xi_b$ sends $x\in K^{MW}_b(F)$ to $x\otimes \del/\del t\in K^{MW}_b(F, F\cdot\del/\del t)$.  The purity isomorphism is this map, composed with the map
$K^{MW}_b(F, F\cdot\del/\del t)\to K^{MW}_b(F)$ sending $y\otimes  \del/\del t$ to $y$.

In general, the  map $\tilde{\epsilon}_b$  is represented by the map of Rost-Schmid complexes
\[
C^*(\tilde{\epsilon}_b):C^*_{Z}(X, \sK^{MW}_b)\to
C^*_{\A^1\setminus\{0\}\times Z}(\A^1\setminus\{0\}\times X, \sK^{MW}_{b+1})
\]
which on the summand $K^{MW}_{b-a}(k(y))$, $y\in y\in Z\cap X^{(a)}$, is the map 
\[
\tilde{\epsilon}_{b, k(y)}:K^{MW}_{b-a}(k(y))\to K^{MW}_{b-a+1}(k(t, y))
\]
with $K^{MW}_{b-a+1}(k(t, y))$ in the summand indexed by $(\eta,y)\in (\A^1\setminus\{0\}\times X)^{(a)}\cap \A^1\times Z$, where $\eta$ is the generic point of $\A^1$. 

The suspension map  is natural with respect morphisms of spaces, in particular, with respect to maps of schemes and with respect to  maps of the form $U\to U/(U\setminus p)$ for $p\in U$. This implies that our description of the suspension map for   $X=\Spec F$ extends termwise on the Rost-Schmid complex, to give the suspension map
\[
C^*_Z(\alpha_1):C^*_Z(X, \sK^{MW}_b)\to C^*_{0\times Z}(\A^1\times X, \sK^{MW}_{b+1}), 
\]
where $C^*_Z(\alpha_1)$ sends an element $x_y\otimes v\in K^{MW}_{b-a}(k(y))\otimes \det^{-1}\mathfrak{m}_y/\mathfrak{m}_y^2$ in the summand for $y\in Z\cap X^{(a)}$ to the element
$x_y\otimes \del/\del t\wedge v$ of $K^{MW}_{b-a}(k(y))\otimes \det^{-1}\mathfrak{m}_{0,y}/\mathfrak{m}_{0,y}^2$ in the summand indexed by $(0,y)$. As the purity isomorphism sends this latter element to $x_y\otimes v\in K^{MW}_{b-a}(k(y))\otimes \det^{-1}\mathfrak{m}_y/\mathfrak{m}_y^2$ in the summand for $y\in Z\cap X^{(a)}$, this completes the proof.
  \end{proof}
  
\begin{remark} It was not completely clear to us whether the map $\tilde{\epsilon}_{b,F}$ should send $x$ to $[t]\cdot x$ or $x\cdot [t]$, in other words, if the $\G_m$-suspension used to define the homotopy module $(\sK^{MW}_n)_n$ is the left or right smash product; we used the left smash product. However, in the  case of the right smash product, one would also replace $\A^1\times X$ with $X\times \A^1$ throughout, the map $C^*_Z(\alpha_1)$ would send $x_y\otimes v$ to $x_y\cdot [t]\otimes v$, then to $x_y\otimes v\wedge \del/\del t$, and the purity isomorphism would send this element back to $x_y\otimes v$, so the result would still hold.
\end{remark}

We now explain how one uses  the  purity isomorphism to define the $\SL$-orientation on $\EM(\sK^{MW}_*)$. 

Let $p:V\to X$ be  a rank $r$ bundle  with trivialized determinant $\rho:\det V\to \sO_X$. Via the purity isomorphism, the isomorphism $s_0^*N_{s_0(X)/V}\cong V$ gives the isomorphism
\[
H^r_{s_0(V)}(V, \sK^{MW}_r(p^*\det^{-1}V))\cong H^0(X, \sGW)
\]
via which we have the element $\th(V)\in H^r_{s_0(V)}(V, \sK^{MW}_r(p^*\det^{-1}V))$ corresponds to $\<1\>\in H^0(X, \sGW)$. If we have an isomorphism $\rho:\det V\xrightarrow{\sim} \sO_X$, applying the induced map $H^r_{s_0(V)}(V, \sK^{MW}_r(p^*\det^{-1}V))\xrightarrow{\sim}
H^r_{s_0(V)}(V, \sK^{MW}_r)$ to $\th(V)$ gives us the class $\th(V,\rho)\in H^r_{s_0(V)}(V, \sK^{MW}_r)$

\begin{proposition} 1. The assignment $(V\to X,\rho:\det V\xrightarrow{\sim} \sO_X)\mapsto 
\th(V,\rho)\in H^r_{s_0(V)}(V, \sK^{MW}_r)$, for $V$ a rank $r$ vector bundle on $X\in \Sm_k$ with trivialization $\rho$ of $\det V$ defines an $\SL$-orientation on $\EM(\sK^{MW}_*)$.\\[2pt]
2. For $p:V\to X$ a rank $r$ vector bundle on $X\in \Sm_k$, the element $\th(V)\in H^r_{s_0(V)}(V, \sK^{MW}_r(p^*\det^{-1}V))$ is the canonical Thom class associated to the $\SL$-orientation on 
$\EM(\sK^{MW}_*)$ given by (1).
\end{proposition}

\begin{proof} We note that the presheaf $X\mapsto \EM^{0,0}(\sK^{MW}_*)(X)=\sK^{MW}_0(X)$ is a Zariski sheaf on $\Sm_k$.  By \cite[Theorem 1.2]{Anan}, $\EM(\sK^{MW}_*)$ admits a unique ``normalized'' $\SL$-orientation, $(V,\rho)\mapsto \tilde{\th}(V,\rho)$; the proof of {\em loc. cit.} shows that the classes  $\tilde{\th}(V,\rho)$ are characterized by three properties:\\[5pt]
i.  For the trivial bundle $V=\sO_X^r$ with $\rho$ the canonical isomorphism $\det\sO_X^r=\sO_X$, 
$ \tilde{\th}(V,\rho)$ is the image of $1\in \EM^{0,0}(\sK^{MW}_*)(X)$ under the suspension isomorphism $\EM^{0,0}(\sK^{MW}_*)(X)\cong \EM^{2r,r}_{s_0(X)}(\sK^{MW}_*)(V)$.\\[2pt]
ii. The classes $\tilde{\th}(V,\rho)$ are natural with respect to vector bundle isomorphisms: if $f:V\to V'$ is an isomorphism of vector bundles on $X$ and we have trivializations $\rho:\det V\to \sO_X$, $\rho':\det V'\to \sO_X$ such that $\rho'\circ \det f=\rho$, then $f^*\tilde{\th}(V',\rho')=\tilde{\th}(V,\rho)$.\\[2pt]
iii. The classes $\tilde{\th}(V,\rho)$ are natural with respect to restriction by open immersions.\\[5pt]
The property (i) for the classes  $\th(V,\rho)$ follows from Lemma~\ref{lem:PuritySuspension} and the properties (ii)  and (iii) follows from the fact that the purity isomorphism is natural with respect to smooth morphisms. This proves (1).

For (2), the canonical Thom class $\tilde{\th}(V)\in \EM(\sK^{MW}_*)^{2r, r}_{s_0(X)}(V, p^*\det^{-1}V)$ is by definition the Thom class $\tilde{\th}(V\oplus \det^{-1}V,can_V)\in  \EM(\sK^{MW}_*)^{2r+2, r+1}_{s'_0(X)}(V\oplus\det^{-1}V)$, where 
$can:\det(V\oplus\det^{-1}V)\to \sO_X$ is the canonical isomorphism and  $s_0':X\to V\oplus\det^{-1}V$ is the 0-section. We have
\[
 \EM(\sK^{MW}_*)^{2r+2, r+1}_{s'_0(X)}(V\oplus\det^{-1}V)=H^{r+1}_{s'_0(X)}(V\oplus\det^{-1}V, \sK^{MW}_{r+1}). 
 \]
 Letting $s_V:V\to V\oplus\det^{-1}V$ be the 0-section over $V$, the normal bundle of $s_V$ is $p^*\det^{-1}V$, giving  the purity isomorphism
\[
 \beta: H^{r+1}_{s'_0(X)}(V\oplus\det^{-1}V, \sK^{MW}_{r+1})\xrightarrow{\sim} H^r_{s_0(X)}(V, \sK^{MW}_{r}(p^*\det^{-1}V))
 \]
Since the diagram of purity isomorphisms
\[
\xymatrix{
H^{r+1}_{s'_0(X)}(V\oplus\det^{-1}V, \sK^{MW}_{r+1})\ar[d]_\beta^\wr\ar[dr]^\sim\\
H^r_{s_0(X)}(V, \sK^{MW}_{r}(p^*\det^{-1}V))\ar[r]^-\sim&H^0(X, \sK^{MW}_0)\\
&
}
\]
commutes, we have $\th(V)=\beta(\tilde{\th}(V))$, which proves (2).
\end{proof}

The next theory we consider, $\EM(\sW_*)$, arises from the sheaf $\sW$ of Witt groups. For $F$ a field, the Witt group $W(F)$ is the quotient $\GW(F)/(H)$, where $(H)$ is the two-sided ideal generated by the hyperbolic form $H(x,y)=x^2-y^2$. Since $q\cdot H=\rnk(q)\cdot H$ for $q\in \GW(F)$,   $(H)$ is also the additive subgroup $\Z\cdot H$  of $\GW(F)$ generated by $H$. Via the isomorphism $\GW(F)\xrightarrow{\sim}K^{MW}_0(F)$, $H$ maps to the element $h:=2+\eta[-1]$.  The surjective map $\times\eta:K^{MW}_0(F)\to K^{MW}_{-1}(F)$ has kernel exactly $(h)$, identifying $W(F)$ with $K^{MW}_{-1}(F)$. For $n<0$, $\times \eta:K^{MW}_n(F)\to K^{MW}_{n-1}(F)$ is an isomorphism, so we have $W(F)\cong K^{MW}_n(F)$ for all $n<0$. All these assertions extend to the sheaf level, giving in particular an isomorphism
\[
\sW\cong \colim_{n\to-\infty}\sK^{MW}_n
\]
with the colimit taken with respect to the maps $\times\eta:\sK^{MW}_n\to \sK^{MW}_{n-1}$. For $L\to X$ a line bundle, this extends to an isomorphism $\sW(L)\cong \colim_n\sK^{MW}_n(L)$, where $\sW(L):=\sGW(L)/H\cdot \sGW(L)$. Defining $\sW_n:=\sW$, we have the homotopy module 
\[
\sW_*:=\colim_{\times\eta}\sK^{MW}_*, 
\]
the associated $T$-spectrum $\EM(\sW_*)$ and cohomology theory $\EM(\sW_*)^{a,b}(X)=H^{a-b}(X, \sW)$. The $\SL$-orientation for $\EM(\sK^{MW}_*)$ induces an $\SL$-orientation for $\EM(\sW_*)$.

We will also  use the $\SL$-oriented theory of hermitian $K$-theory $\KO$, see \cite{Schlichting, SchlichtingTripathi} for the basic construction and first properties. The canonical Thom class $\th(V)\in \KO^{2r,r}_{s_0(X)}(V;p^*\det^{-1}V)$ for a rank $r$ vector bundle $p:V\to X$ is the Koszul complex
\[
\Kos(V,\can):=\Lambda^rp^*V^\vee\to\ldots\to p^*V^\vee\to \sO_V
\]
endowed with the $\det^{-1}p^*V[r]$-valued quadratic form $p^*q_{V}:\Kos(V,\can)\otimes\Kos(V,\can)\to \det^{-1}p^*V[r]$ given by the exterior product maps
\[
-\wedge-:\Lambda^ip^*V^\vee[i]\otimes \Lambda^{r-i}p^*V^\vee[r-i]\to \Lambda^rp^*V^\vee[r].
\]
See \cite[Theorems 1.4, 5.1]{PaninWalter} for details. The Euler class is thus $(\oplus_i\Lambda^iV^\vee[i], q_V)\in \KO^{2r,r}(X; \det^{-1}V)$.

We will also use the $\GL$-oriented theories $\EM(K^M_*)$ associated to the homotopy module $\sK^M_*$, 
\[
\EM(K^M_*)^{a,b}(X)=H^{a-b}(X, \sK^M_b),
\]
and Quillen algebraic $K$-theory $\KGL$, $\KGL^{a,b}(X)=K_{2b-a}(X)$. Via Bloch's formula and purity, $H^n_Z(X, \sK^M_n)=\CH^{n-c}(Z)$ for $Z\subset X$ a smooth codimension $c$ closed subscheme of $X\in \Sm_k$,   the Thom class for $V$ is represented by the 0-section in $V$ and the Euler class is the top Chern class $c_r(V)$, $r=\rnk(V)$.  The Thom class in $K$-theory is represented by 
$\Kos(V,\can)$ and the Euler class is $\sum_{i=0}^r(-1)^i[\Lambda^iV^\vee]$. This all follows by a similar argument to what we used to construct $\SL$-orientations above,  from the fact that  $\EM(K^M_*)$ and $\KGL$ admit purity isomorphisms $\EM(K^M_*)^{a+2c, b+c}_Z(X)\cong \EM(K^M_*)^{a,b}(Z)$ and $\KGL^{a+2c, b+c}_Z(X)\cong \KGL^{a,b}(X)$ for $Z\subset X$ a codimenison $c$ closed immersion in $\Sm_k$. The purity isomorphism for $\EM(K^M_*)$ is a direct consequence of the Gersten resolution for $K^M_*$ \cite{Kerz}. For $\KGL$ the purity isomorphism is a consequence of Quillen's localization theorem for the $K$-theory of coherent sheaves \cite{Quillen} and the fact that $\KGL$ represents Quillen $K$-theory \cite{PPR}.

We have the surjection of homotopy modules $\pi:\sK^{MW}_*\to \sK^M_*=\sK^{MW}_*/(\eta)$ and the induced map $\EM(\pi):\EM(\sK^{MW}_*)\to \EM(\sK^M_*)$ is a map of $\SL$-oriented theories. Similarly, we have the morphism of ring spectra $\KO\to \KGL$, which is also a map of $\SL$-oriented theories. Finally, we have the homotopy module $\sI^*$, where $\sI\subset \sK^{MW}_0$ is the augmentation ideal for the rank homomorphism $\sK^{MW}_0\to \sK^M_0=\Z$ and $\sI^n$ is the $n$ power of this sheaf of ideals. In fact, $\sI^{n+1}$ is the kernel of the surjection $\pi:\sK^{MW}_n\to \sK^M_n$ \cite[Corollaire 5.4]{MorelPI}, which shows that $\sI^*$ is indeed a homotopy module\footnote{The result of Morel cited here is for fields, but this extends to sheaves using the fact that $\sK^{MW}_n$ and $\sK^M_n$ are unramified sheaves.}.

If the context does not make clear the choice of cohomology theory, we write $e^{\CW}$, $e^{\KO}$, $e^{\CH}$, $e^\KGL$ for the Euler classes for $\EM(K^{MW}_*)$, $\KO$, $\EM(\sK^M_*)$ and $\KGL$, respectively, and similarly for the Thom classes, pushforward maps, etc. We reserve the standard notation for Chern classes, $c_n$, for the Chern classes with values in $\CH^n(-)=H^n(-, \sK^M_n)$.

\section{Euler class and Euler characteristic}\label{sec:Euler}

We recall two special cases of the general motivic Gau{\ss}-Bonnet theorem of Deglis\'e-Jin-Khan \cite[Theorem 4.6.1]{DJK}.

\begin{theorem}\label{thm:CWEulerClass} Let $\pi_X:X\to \Spec k$ be a smooth projective dimension $d$ $k$-scheme, with tangent bundle $T_X\to X$.  Then   we have
\[
\chi(X/k)=\pi^\CW_{X*}(e^{\CW}(T_X))=\pi^\KO_{X*}(e^{\KO}(T_X))\in \GW(k)
\]
\end{theorem}
As consequence (see Remark~\ref{rem:CatTopEuler}), we have classical versions of Gau{\ss}-Bonnet:
\[
\chi^{top}(X)=\rnk\chi(X/k)=\pi^\CH_{X*}(e^{\CH}(T_X))=\pi^\KGL_{X*}(e^\KGL(T_X))\in \Z.
\]
With Raksit,  have also given a proof of a motivic Gau{\ss}-Bonnet formula \cite[Theorem 1.5]{LREulerChar} in the setting of $\SL$-oriented theories.

To give Theorem~\ref{thm:CWEulerClass} a concrete expression, we have shown in the proof of  \cite[Theorem 8.4, Theorem 8.6]{LREulerChar} that the pushforward maps for $\EM(\sK^{MW}_*)$ defined using the $\SL$-orientation and the six-functor formalism agree with those defined by Fasel and Fasel-Srinivas \cite{FaselCW, FaselSri} and those for $\KO$ agree with the ones defined by Grothendieck-Serre duality, and used by Calm\'es-Hornbostel \cite{CalmesHornbostel} (for the Witt groups). Similarly, the six-functor pushforward maps for 
$\EM(\sK^{M}_*)$ and $\KGL$ are the ``standard'' ones: on $\EM(\sK^{M}_*)^{2*,*}=\CH^*$, the standard ones are the classical pushforward maps on $\CH^*$ and on $\KGL^{2*,*}=K_0$, these are the usual $\sF\mapsto \sum_i(-1)^iR^if_*\sF$, using Quillen's resolution theorem to identify $K_0(X)$ with the Grothendieck group of coherent sheaves on $X$, for $X\in \Sm_k$.

We give a first consequence of the Gau{\ss}-Bonnet theorem.

\begin{corollary}\label{cor:OddEulerChar} Let $Y$ be an integral smooth projective $k$-scheme of odd dimension over $k$. Then the Euler characteristic $\chi(Y/k)$  is hyperbolic: $\chi(Y/k)=m\cdot H$ for some $m\in \Z$,  hence $\text{rank}(\chi(Y/k))=2m$ is even.
\end{corollary}

The proof is based on the following lemma, which is also of independent interest.

\begin{lemma}\label{lemma:OddEulerClass} Let $\pi:V\to Y$ be a vector bundle of odd rank $r$ over some $Y\in \Sm_k$. Then for all $u\in k^\times$, 
\[
e^\CW(V)=\<u\>\cdot e^\CW(V)
\]
in $H^r(Y,\sK^{MW}_r(\det^{-1}V))$. Moreover, 
\[
\eta\cdot e^\CW(V)=0
\]
in $H^r(Y,\sK^{MW}_{r-1}(\det^{-1}V))$
\end{lemma}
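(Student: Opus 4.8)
The plan is to exploit the fact that $\langle u\rangle$ acts trivially on the Euler class of an odd rank bundle because the Euler class is ``pulled back'' from a situation where this unit can be absorbed into a sign. Concretely, I would work locally on $Y$: since $H^r(Y,\sK^{MW}_r(\det^{-1}V))$ injects into the corresponding group over any dense open (via the Gersten resolution, cf. the Gersten complex \eqref{mult:GerstenComplex0}), and more to the point since the identity $e(V)=\langle u\rangle e(V)$ is an identity between global cohomology classes, it suffices to understand the local structure of $e(V)$ near its support. The key observation is that the section of $V$ defining $e(V)$ can be taken to be the zero section, so $e(V) = s_0^*(s_{0*}(1)) = s_0^*(\operatorname{im}\,\th(V))$, and the Thom class $\th(V)$ is represented, in \v{C}ech cohomology over a trivializing open $U$ with coordinates $t_1,\dots,t_r$, by the explicit expression $[[\phi^*t_r]\cdots[\phi^*t_1]]\otimes\phi^*t_1\wedge\dots\wedge\phi^*t_r$ worked out in \S3.2.

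First I would reduce, using Lemma~\ref{lem:EulerClassSum} and the naturality statements in Proposition~\ref{prop:ThomFunct}, to understanding how scaling a trivialization by $u$ affects this \v{C}ech representative. Replacing the trivialization $\phi$ by $u\cdot\phi$ (on one coordinate, say) changes $[\phi^*t_i]$ to $[u\phi^*t_i] = [u] + \langle u\rangle[\phi^*t_i]$ by relation (3'), while simultaneously changing the determinant trivialization $\phi^*t_1\wedge\dots\wedge\phi^*t_r$ by $u^{-1}$, which in the twisted sheaf $\sK^{MW}_r(\det^{-1}V)$ contributes a factor $\langle u^{-1}\rangle=\langle u\rangle$ via the module structure $a\otimes u\lambda = \langle u\rangle a\otimes\lambda$. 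The point, already noted after the displayed computation in \S3.2, is that the resulting class in $H^r_{0_U}$ is \emph{independent} of the choice of trivialization — so rescaling by $u$ does nothing. To extract from this the asserted identity $e(V)=\langle u\rangle e(V)$, rather than merely $e(V)=e(V)$, I would instead scale $V$ itself: compare $e(V)$ with $e(V)$ computed after applying the automorphism ``multiply the first coordinate by $u$'', tracking how this automorphism acts on $\det^{-1}V$ and on the Euler class, and observe that the odd rank forces an odd number of sign-type twists to combine into exactly one factor $\langle u\rangle$ that must therefore be trivial on $e(V)$.

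Alternatively — and this is probably the cleaner route — I would use the dual-bundle formula, Theorem~\ref{thm:Dual}, together with the behavior of $e$ under twisting by line bundles (the subject of \S\ref{sec:DualBundle}, which I may invoke): one knows $e(V) = (-\langle -1\rangle)^r e(V^\vee)$, and for $r$ odd this is $-\langle-1\rangle\cdot(\langle-1\rangle)^{r-1}e(V^\vee) = \langle 1\rangle \cdot \epsilon \cdot (\dots)$; combined with the fact that $h\cdot e(V) = \operatorname{rank}\cdot e(V)$ type relations and $\langle u\rangle h = h$, one deduces that $e(V)$ is annihilated by $\langle u\rangle - \langle 1\rangle$ for every unit $u$. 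The cleanest incarnation: show $h$ acts on $e(V)$ the way it acts on any class that is ``a single Euler class of odd rank'', using that $e(\sO_Y) = e(\det^{-1}V\otimes\det V)$-type decompositions and the explicit $\chi^{cat}(T)=\langle-1\rangle$ bookkeeping, and then use $\langle u\rangle\cdot h = h$ together with the 2-torsion structure of $\GW$ to conclude.

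The main obstacle I anticipate is making the sign-and-twist bookkeeping completely rigorous: the identity hinges on the interplay between (i) the action of $u$ on $\sK^{MW}_*$ by $\times\langle u\rangle$, (ii) the $\det^{-1}V$-twist in which $e(V)$ lives, and (iii) the relation $[uv]=[u]+\langle u\rangle[v]$, and it is easy to lose or gain a factor of $\langle u\rangle$ or $\epsilon$ when an odd number of coordinates is involved. I would handle this by doing the computation for a rank one bundle first (where $e(L)\in H^1(Y,\sK^{MW}_1(L^{-1}))$ and scaling a local generator of $L$ by $u$ visibly multiplies the \v{C}ech representative by $\langle u\rangle$ on the $\sK^{MW}$ side and by $\langle u\rangle$ again on the twist side, the two cancelling — \emph{unless} one is careful about which way the twist goes, which is exactly where the odd/even distinction enters), then use $e(V_1\oplus\dots\oplus V_r)=\prod e(V_i)$ to reduce an arbitrary odd-rank bundle (after passing to a dense open where it splits into line bundles, which is harmless by the injectivity in the Gersten complex) to an odd product of rank-one contributions, getting exactly one uncancelled $\langle u\rangle$ that must act as the identity since $e(V) = e(V)$ is trivially true.
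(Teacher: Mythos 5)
Your proposal circles the right mechanism but never pins it down, and each of the three routes you sketch has a concrete defect. The paper's proof is a short global argument: take $\phi_u:V\to V$ to be scalar multiplication by $u$ on \emph{all} of $V$. Naturality of the Thom class (Proposition~\ref{prop:ThomFunct}(1)) gives $(\phi_u,\det^{-1}\phi_u)^*(\th(V))=\th(V)$; since $\det^{-1}\phi_u$ is multiplication by $u^{-r}$, separating the twist from the pullback costs a factor $\<u^{-r}\>=\<u\>^r=\<u\>$ precisely because $r$ is odd; and since $\phi_u\circ s_0=s_0$, pulling back along the zero section yields $e(V)=s_0^*\phi_u^*(\th(V))=\<u\>\cdot e(V)$. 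The parity of $r$ enters \emph{only} through $\det\phi_u=u^r$, which is why the automorphism must scale the whole bundle. Your first route instead proposes ``multiply the first coordinate by $u$'': that is not a globally defined automorphism of a general $V$, and if it were usable it would change $\det$ by $u^1$ and hence ``prove'' the identity for every rank, which is false (for even rank one would get $\<1\>+h=\<u\>+h$ from $\P^2$, forcing every $u$ to be a square). Your phrase ``odd number of sign-type twists'' suggests you half-see the correct automorphism, but you never state it.

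The other two routes fail outright. Theorem~\ref{thm:Dual} involves only the unit $-1$ and relates $e(V)$ to $e(V^\vee)$; applying it twice returns the tautology $e(V)=e(V)$ and gives no access to an arbitrary unit $u$, while $\<u\>\cdot h=h$ is only useful once one already knows $e(V)$ is a multiple of $h$ --- which is essentially the conclusion that Theorem~\ref{thm:OddEulerChar} is to deduce \emph{from} this lemma. Finally, the splitting-principle reduction rests on a false injectivity claim: restriction to a dense open is injective for $H^0$, not for $H^r(Y,\sK^{MW}_r(-))$ --- already $\CH^1(\P^1)\to\CH^1(\A^1)$ is the zero map --- so passing to an open subset where $V$ splits into line bundles discards exactly the part of the class you need to control; and in any case the rank-one \v{C}ech computation you describe has its two factors of $\<u\>$ cancelling, so it yields only $e(L)=e(L)$ rather than the desired identity.
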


\begin{proof}   Let $\phi_u:V\to V$ be the map multiplication by $u$. The naturality of the Thom class says that 
\[
(\phi_u,\det^{-1}\phi_u)^*(\th^\CW(V))=\th^\CW(V)
\]
Since $\det^{-1}\phi_u:\det^{-1}V\to \det^{-1}V$ is  multiplication by $u^{-r}$, $\det^{-1}\phi_u^*$ acts by  multiplication by $\<u^{-r}\>=\<u\>$ on the sheaf $\sK^{MW}_r(\pi^*\det^{-1}V)$. Thus
$(\id, \det^{-1}\phi_u)^*$ acts by $\times\<u\>$ on $H^r(V,\sK^{MW}_r(\pi^*\det^{-1}V))$. 

Letting $\phi_u^*:=(\phi_u, \id)^*$, we have $(\phi_u,\det^{-1}\phi_u)^*=\phi_u^*\circ (\id, \det^{-1}\phi_u)^*$, and thus 
\[
\phi_u^*(\th^\CW(V))=\<u\>\cdot \th^\CW(V).
\]
Since $\phi_u\circ s_0=s_0$, we have
\begin{align*}
e^\CW(V)&=s_0^*(\th^\CW(V))\\
&=s_0^*(\phi_u^*(\th^\CW(V)))\\
&=s_0^*(\<u\>\cdot\th^\CW(V))\\
&=\<u\>\cdot e^\CW(V).
\end{align*}

We now show that $\eta\cdot e^\CW(V)=0$. Let $\sO$ be the local ring $k[t]_{(t)}$, with quotient field $K:=k(t)$ and residue field $k$. Let $\sY=Y\times_k\sO$, with open subscheme $j:Y_K\to \sY$, closed complement $i:Y\to \sY$ and projections $p:\sY\to Y$, $p_K:Y_K\to Y$ We have the exact localization sequence
\begin{multline*}
\ldots\to H^r(\sY, \sK^{MW}_m(p^*\det^{-1}V)) \xrightarrow{j^*}
H^r(Y_K, \sK^{MW}_m(p_K^*\det^{-1}V))\\\xrightarrow{\delta} H^r(Y, \sK^{MW}_{m-1}(\det^{-1}V))\to\ldots
\end{multline*}
and a similar sequence for $\sO$,
\[
\ldots\to K^{MW}_n(\sO)\xrightarrow{j_0^*}K^{MW}_n(K)\xrightarrow{\delta_0} K^{MW}_{n-1}(k)\to\ldots\ .
\]

We claim that
\[
\delta(\alpha\cdot  p_K^*x)=\delta_0(\alpha)\cdot x
\]
for $x\in H^r(Y, \sK^{MW}_m(\det^{-1}V))$, $\alpha\in K^{MW}_n(K)$. To see this, first represent $x$ as an $r$-cocycle in the Rost-Schmid complex $C^*(Y, \sK^{MW}_m(\det^{-1}V))$,
\[
x=[\sum_{y\in Y^{(r )}}x_y\otimes v_y;\quad x_y\in K^{MW}_{m-r}(k(y)), v_y\in \det^{-1}\mathfrak{m}_y/\mathfrak{m}_y^2\otimes \det^{-1}V].
\]
Here the $[-]$ means: take the associated cohomology class.
This represents $\alpha\cdot  p_K^*x$ as
\[
\alpha\cdot  p_K^*x=[\sum_{y\in Y^{(r )}}\alpha\cdot p_K^*x_y\otimes v_y],
\]
where $\alpha\cdot p_K^*x_y\otimes v_y$ is in the summand corresponding to $y_K:=\Spec K\otimes_kk(y)$. Thus $\delta(\alpha\cdot  p_K^*x)$ is represented by 
\[
\delta(\alpha\cdot  p_K^*x)=[\sum_{y\in Y^{(r )}}\delta_y(\alpha\cdot p_K^*x_y\otimes  v_y)]
\]
where $\delta_y: K^{MW}_{m-r}(K\otimes_kk(y))\to  K^{MW}_{m-r-1}(k(y))$ is the boundary map associated to the DVR $\sO\otimes_kk(y)=k(y)[t]_{(t)}$ with parameter $t$. But since 
$p_K^*x_y$ extends to an element of $K^{MW}_{m-r}(\sO\otimes_kk(y))$ that maps to $x_y$ under the quotient map $sO\otimes_kk(y)\to k(y)$,  we have
$\delta_y(p_K^*x_y)=0$ and the explicit formula for $\delta_y$ and $\delta_0$ show that
\[
\delta_y(\alpha\cdot p_K^*x_y)=\delta_0(\alpha)\cdot x_y.
\]

 Taking $x=e^\CW(V)$, $\alpha=\<t\>$, and noting that $\delta_0(\<t\>)=\delta_0(1+\eta\cdot [t])=\eta$, this gives
\[
\delta(\<t\>\cdot p_K^*e^\CW(V))=\eta\cdot e^\CW(V).
\]
Similarly,
\[
\delta(p_K^*e^\CW(V))=0
\]
and since 
\[
\<t\>\cdot p_K^*e^\CW(V)=\<t\>\cdot e^\CW(p_K^*V)=e^\CW(p_K^*V)=p_K^*e^\CW(V)
\]
we have $\eta\cdot e^\CW(V)=0$.
\end{proof}

\begin{proof}[Proof of Corollary~\ref{cor:OddEulerChar}] Suppose $Y$ is integral of odd dimension $d$ over $k$. Applying Lemma~\ref{lemma:OddEulerClass}, we have
\[
\eta\cdot e^\CW(T_Y)=0\in H^d(Y, \sK^{MW}_{d-1}(\omega_{Y/k}));
\]
pushing forward to $\Spec k$ and using Theorem~\ref{thm:CWEulerClass} gives
\[
\eta\cdot \chi(Y/k)=0.
\]
Via the isomorphisms $\GW(k)\cong K^{MW}_0(k)$, $W(k)\cong K^{MW}_{-1}$, the surjection $\times \eta:K^{MW}_0(k)\to K^{MW}_{-1}(k)$ transforms to the canonical surjection $\GW(k)\to W(k)$ with kernel the ideal generated by the hyperbolic form $H$. As $q\cdot H=\rnk(q)\cdot H$ in $\GW(k)$ for each $q\in \GW(k)$, the identity $\eta\cdot \chi(Y/k)=0$ says $\chi(Y/k)=m\cdot H$ for some $m\in \Z$. Since $\rnk(m\cdot H)=2m$, this finishes the proof.
\end{proof}

\begin{proposition}\label{prop:EulerVanishing} Let $\pi:V\to Y$ be a vector bundle of odd rank $r$ over some $Y\in \Sm_k$. Then the Euler class $e^\sW(V)\in H^r(Y, \sW(\det^{-1}V))$ is zero.
\end{proposition}

\begin{proof} Since $\sW\cong\sK^{MW}_*[\eta^{-1}]$ and $e^\sW(V)$ is the image of $e^\CW(V)$ under the canonical map $H^r(Y, \sK^{MW}_r(\det^{-1}V))\to H^r(Y, \sW(\det^{-1}V))$, this follows from Lemma~\ref{lemma:OddEulerClass}.
\end{proof}

An analog of part of Lemma~\ref{lemma:OddEulerClass} holds for $e^\KO$; the proof is even easier. Recall the {\em hyperbolic map} $h_L:\KGL^{a,b}(Y)\to \KO^{a,b}(Y; L)$ (see, e.g. \cite[\S 4.7]{Schlichting2}). For a vector bundle $V$ and $(a,b)=(2r,r)$,  $h_L(V)=(V\oplus V^\vee\otimes L[r], h(\can))$, where $\can:V\times V^\vee\otimes L[r]\to L[r]$ is the canonical pairing, and
\[
h(\can)=\begin{pmatrix} 0&\can\\\can&0\end{pmatrix}:(V\oplus V^\vee\otimes L)\times(V\oplus V^\vee\otimes L[r])\to L[r]
\]

\begin{lemma}\label{lemma:OddKOEulerClass}  Let $\pi:V\to Y$ be a vector bundle of odd rank $r$ over some $Y\in \Sm_k$. Then $e^\KO(V)=h_{\det^{-1}V}(\oplus_{i=0}^{[r/2]}(-1)^i[\Lambda^iV^\vee])$ in $\KO^{2r,r}(Y, \det^{-1}V)$. As consequence $\<u\>\cdot e^\KO(V)=e^\KO(V)$ for all $u\in \Gamma(Y, \sO_Y^\times)$ and $\eta\cdot e^\KO(Y)=0$.
\end{lemma}

\begin{proof} This follows easily from the explicit form of $e^\KO(V)$ as
\[
e^\KO(V)=(\oplus_{i=0}^r\Lambda^iV^\vee[i], q_V)
\]
where $q_V$ is the sum of the exterior product maps $-\wedge -:\Lambda^iV^\vee[i]\otimes \Lambda^{r-i}V^\vee[r-i]\to \Lambda^rV^\vee[r]$. The induced isomorphism $\Lambda^{r-i}V^\vee[r-i]\otimes \Lambda^rV[-r]\cong (\Lambda^iV^\vee[i])^\vee$ gives an isomorphism of the   restriction $q_{V,i}$ of $q_V$,
\[
q_{V,i}:(\Lambda^iV^\vee[i]\oplus  \Lambda^{r-i}V^\vee[r-i])\otimes(\Lambda^iV^\vee[i]\oplus \Lambda^{r-i}V^\vee[r-i]\to \Lambda^rV^\vee[r],
\]
with the hyperbolic form on $\Lambda^iV^\vee[i]$, which gives the identity
\[
h_{\det^{-1}V}(\sum_{i=0}^{[r/2]}(-1)^i[\Lambda^iV^\vee])=(\oplus_{i=0}^r\Lambda^iV^\vee[i], q_V)
\]
in $\KO^{2r,r}(Y, \det^{-1}V)$.

The two further assertions follow from $\<u\>\cdot h_L(x)=h_L(x)$ and $\eta\cdot h_L(x)=0$ for all $x\in \KGL^{2r,r}(Y)$, $u\in \Gamma(Y, \sO_Y^\times)$, $L\to X$ a line bundle.
\end{proof}

 \begin{remark} One can also prove Corollary~\ref{cor:OddEulerChar} using  $\chi(X/k)=\pi_{X*}(e^\KO(T_X))$ and the explicit form this latter pushforward takes; this is the proof given in \cite[Corollary 8.7]{LREulerChar}.
 \end{remark}

\section{Local indices}\label{sec:Local} We consider the problem of computing the Euler class with support associated to a section $s$ of  a vector bundle $\pi:V\to X$ on a smooth $k$-scheme $X$. Kass and Wickelgren \cite{KassWickelgren} have  defined a ``degree of the Euler class" for a so-called relatively oriented vector bundle $V$ on a smooth and proper $k$-scheme $X$, assuming that $V$ has rank equal to the dimension of $X$ and comes with a section having isolated zeros (plus some additional technical assumptions). Their definition relies on the construction of an explicit symmetric bilinear form associated to the given section $\sigma$ and a zero of $\sigma$, going back to work of  Scheja and Storch \cite{SchejaStorch}. Bachmann and Wickelgren \cite{BachmannWickelgren} have refined this method and their results show  that the Scheja-Storch form computes the local Euler class as defined above, for a section with isolated zeros, without the introduction of a relative orientation. We recall the definition of the Scheja-Storch form here and explain how the results of Bachmann-Wickelgren give this computation.

Let $\sO$ be a regular local ring with residue field $k$ and maximal ideal $\mathfrak{m}$. We assume that the quotient map $\pi:\sO\to k$ splits, that is, $\sO$ is a $k$-algebra. Let $t_*:=t_1,\ldots, t_n$ be a system of parameters for $\sO$ and let $s_*:=s_1,\ldots, s_n$ be elements of $\mathfrak{m}$ such that the ideal $(s_1,\ldots, s_n)$ is $\mathfrak{m}$-primary. Let $J(s_*)=\sO/(s_1,\ldots, s_n)$. Then $J(s_*)$ is a finite dimensional $k$-algebra with quotient map $p:\sO\to J(s_*)$.   

For an element $f\in \sO$, let $f^\delta=f\otimes 1-1\otimes f\in  \sO\otimes_k\sO$, and let $I_\delta\subset \sO\otimes_k\sO$ be the ideal $(t_1^\delta,\ldots, t_n^\delta)$. One sees easily that $I_\delta$ is the kernel of the multiplication map $\mu:\sO\otimes_k\sO\to \sO$ and that $f^\delta$ is in $I_\delta$ for all $f\in \mathfrak{m}$. In particular, there are elements $a_{ij}\in  \sO\otimes_k\sO$ with
\begin{equation}\label{eqn:SSElt}
s_i^\delta=\sum_{j=1}^n a_{ij}\cdot t_j^\delta;\quad i=1,\ldots, n.
\end{equation}
The {\em Scheja-Storch element} $e_{t_*,s_*}\in J(s_*)$ is defined as 
\[
e_{t_*, s_*}:=(p\otimes \pi)(\det(a_{ij}))\in J(s_*)\otimes_kk=J(s_*).
\]

Let $\Delta_{t_*, s_*}\in J(s_*)\otimes_k J(s_*)$ be the element $(p\otimes p)(\det(a_{ij}))$. By \cite[Satz 3.3]{SchejaStorch}, the map
\[
\Theta_{t_*, s_*}: \Hom_k(J(s_*), k)\to J(s_*)
\]
defined by $\Theta_{t_*, s_*}(\phi):=(\phi\otimes\id)(\Delta_{t_*, s_*})$ is an isomorphism (of $J(s_*)$-modules).
Let  $\eta_{t_*, s_*}:=\Theta_{t_*, s_*}^{-1}(1)$; $\eta_{t_*, s_*}$ is called the {\em generalized trace} in \cite{SchejaStorch}.

We summarize the main facts about $J(s_*)$ and $e_{t_*, s_*}$.  

\begin{theorem}\label{thm:SS} 1. $e_{t_*, s_*}\in J(s_*)$ is independent of the choice of the $a_{ij}$.\\[2pt]
2. The socle of $J(s_*)$, that is, $\{x\in J(s_*)\mid \mathfrak{m}\cdot x=0\}$, is a one-dimensional $k$-vector space, with generator $e_{t_*, s_*}$.\\[2pt]
3. Let $\Tr:J(s_*)\to k$ be a $k$-linear map such that $\Tr(e_{t_*, s_*})=1$. Then the bilinear form on   $J(s_*)$ 
\[
B_{s_*, t_*}(x,y):=\Tr(xy)
\]
is non-degenerate, and $[B_{s_*, t_*}]\in \GW(k)$ is independent of the choice of $\Tr$ (satisfying $\Tr(e_{t_*, s_*})=1$).\\[2pt]
4. Suppose we have a new system of parameters $(t_1',\ldots, t_n')$ for $\mathfrak{m}$ and a second set of generators $(s_1',\ldots, s_n')$ for the ideal $(s_1,\ldots, s_n)$. Write
\[
t_i'=\sum_j\alpha_{ij}t_j,\ s_i'=\sum_j\beta_{ij}s_j
\]
for $\alpha_{ij},\beta_{ij}\in \sO$. Let $\alpha,\beta\in k$ be the respective images of $\det(\alpha_{ij})$, $\det(\beta_{ij})$ in $k$. Then
\[
[B_{s'_*, t'_*}]=\<\alpha\cdot\beta\>\cdot [B_{s_*, t_*}]\in \GW(k)
\]
5. If we write $s_i=\sum_j\bar{a}_{ij}t_j$ in $\sO$, then $e_{t_*, s_*}=\det(\bar{a}_{ij})$.\\
6. The map $\eta_{t_*, s_*}:J(s_*)\to k$ satisfies $\eta_{t_*, s_*}(e_{t_*, s_*})=1$, hence
\[
[B_{s_*, t_*}]=[(x,y)\mapsto \eta_{t_*, s_*}(xy)]\in \GW(k).
\]
\end{theorem}

\begin{proof} By \cite[Lemma 1.2($\alpha$)]{SchejaStorch}, $(p\otimes p)(\det(a_{ij}))\in J(s_*)\otimes_k J(s_*)$ is independent of the choice of the $a_{ij}$, which proves (1). (2) is proven in \cite[Lemma 3]{KassWickelgren}. (3) follows from \cite[Lemma 5]{KassWickelgren} and (4) follows from the transformation law \cite[Satz 1.1]{SchejaStorchRes}. For (5),  we apply $(p\otimes \pi)$ to the equation \eqref{eqn:SSElt}, giving $s_i=\sum_j (p\otimes \pi)(a_{ij})\cdot t_j$, so $e_{t_*, s_*}=\det((p\otimes \pi)(a_{ij}))$. On the other hand, if $s_i=\sum_j\bar{a}_{ij}t_j$, then  again by 
\cite[Lemma 1.2($\alpha$)]{SchejaStorch}, we have $\det(\bar{a}_{ij})=\det((p\otimes \pi)(a_{ij}))=e_{t_*, s_*}$. 

For (6), choose a $k$-basis $b_1,\ldots, b_n$ for $J(s_*)$.  We may assume that $b_1=1$ and $b_n=e_{t_*, s_*}$ (unless $n=1$, in which case we take $b_1=1$) and that $b_j$ is in $\mathfrak{m}/(s_1,\ldots, s_n)$ for $1<j<n$. Write
\[
\Delta_{t_*, s_*}=\sum_{ij}b_{ij}\cdot b_i\otimes b_j;\quad b_{ij}\in k.
\]
By \cite[Satz 3.1]{SchejaStorch} $b_{ij}=b_{ji}$ for all $i,j$. Then $e_{t_*, s_*}=(p\otimes \pi)(\Delta_{t_*, s_*})=(\pi\otimes p)(\Delta_{t_*, s_*})$, so $b_{i1}=b_{1i}=0$ for $i<n$ and $b_{1n}=b_{n1}=e_{t_*, s_*}$. By construction, 
\[
1= (\eta_{t_*, s_*}\otimes\id)(\Delta_{t_*, s_*})=\eta_{t_*, s_*}(e_{t_*,s_*})\cdot 1+\sum_{i,j=2}^n\eta_{s_*}(b_{ij})\cdot b_j,
\]
 so $\eta_{t_*,s_*}(e_{t_*,s_*})=1$. 
\end{proof}

\begin{remark} With $\sO$, $t_*$ and $s_*$ as above, the $k$-algebra $J(s_*)$ has dualizing module
\[
\omega_{J(s_*)/k}\cong \omega_{\sO/k}\otimes_\sO \det^{-1} (s_1,\ldots, s_n)/(s_1,\ldots, s_n)^2
\]
so the basis elements  
$\bar{s}_1\wedge\ldots\wedge\bar{s}_n$ for $ \det (s_1,\ldots, s_n)/(s_1,\ldots, s_n)^2$ and $dt_1\wedge\ldots\wedge dt_n$ for $\omega_{\sO/k}$ give an isomorphism $\omega_{J(s_*)/k}\cong J(s_*)$. Via this isomorphism, the   isomorphism given by Grothendieck duality theory
\[
\Hom_{k}(J(s_*), k)\cong \Hom_{J(s_*)}(J(s_*), \omega_{J(s_*)/k})\cong J(s_*)
\]
is  given by $\phi\mapsto (\phi\otimes\id)(\Delta_{t_*, s_*})$; this is proven in \cite[Theorem 2.18]{BachmannWickelgren}

\end{remark}

Let $X$ be a smooth $k$-scheme of dimension $d$ over $k$ and let $x\in X$ be a closed point. We have the purity isomorphism \eqref{eqn:PurityIso}
\[
H^d_x(X,\sK^{MW}_d(L))\cong \GW(k(x), \det^{-1}\mathfrak{m}/\mathfrak{m}^2\otimes L).
\]

\begin{corollary}\label{cor:SSLocalEulerClass} Let $k$ be a field. Let $p:V\to X$ be a rank $d$ vector bundle, with  $X\in \Sm_k$ of dimension $d$ over $k$ and let $s:X\to V$ be a section. Suppose a closed point $x\in X$ is an isolated zero of $s$; suppose in addition that $k(x)$ is a separable extension of $k$. Choose a framing $e_1,\ldots, e_d$ for $V$ in a neighborhood of $x$ and let $t_*:=t_1,\ldots, t_d$ be a system of parameters for the maximal ideal $\mathfrak{m}_x\subset \sO_{X,x}$.  Write $s=\sum_{i=1}^ds_i e_i$ near $x$ and let $s_*=s_1,\ldots, s_d$. Then
\[
e_x(V, s)\in H^d_x(X, \sK^{MW}_d(\det^{-1}V))= \GW(k(x), \det^{-1}\mathfrak{m}/\mathfrak{m}^2\otimes \det^{-1}V)
\]
is given by 
\[
e_x(V, s)=[B_{s_*, t_*}]\otimes \frac{\del}{\del t_1}\wedge\ldots\wedge  \frac{\del}{\del t_d}\otimes
(e_1\wedge\ldots\wedge e_d)^{-1}
\]
\end{corollary}

\begin{proof} Since the finite extension $k\subset k(x)$ is separable, we have a canonical isomorphism
\[
\omega_{X/k}\otimes k(x)\cong \det\mathfrak{m}/\mathfrak{m}^2
\]
The choice of framing $e_1,\ldots, e_d$ and the choice of parameters $t_1,\ldots, t_d$ uniquely define an isomorphism in a neighborhood  $U$ of $x$
\[
\phi: \det^{-1} V\xrightarrow{\sim} \omega_{X/k}
\]
by $\phi((e_1\wedge\ldots\wedge e_d)^{-1})= dt_1\wedge\ldots\wedge dt_n$. Via this isomorphism, we have the corresponding  isomorphism
\[
 \det^{-1}\mathfrak{m}/\mathfrak{m}^2\otimes \det^{-1}V\xrightarrow{\sim} k(x)
 \]
giving the isomorphism
\[
\GW(\phi): \GW(k(x), \det^{-1}\mathfrak{m}/\mathfrak{m}^2\otimes \det^{-1}V)\to\GW(k(x))
\]
sending $[B_{s_*, t_*}]\cdot \frac{\del}{\del t_1}\wedge\ldots\wedge  \frac{\del}{\del t_d}\otimes
(e_1\wedge\ldots\wedge e_d)^{-1}$ to $[B_{s_*, t_*}]$.

Note that both $e_x(V, s)$ and $[B_{s_*, t_*}]\cdot \frac{\del}{\del t_1}\wedge\ldots\wedge  \frac{\del}{\del t_d}\otimes
(e_1\wedge\ldots\wedge e_d)^{-1}$ are unchanged if we replace $X$ with a Nisnevich neighborhood $(X',x)\to (X,x)$, so we may assume that $k=k(x)$. Moreover, the isomorphism $\phi$ defines a relative orientation for $V$ over $U$. In this case, \cite[Proposition 2.32 and Theorem 7.6]{BachmannWickelgren} says that 
$\GW(\phi)(e_x(V, s))=[B_{s_*, t_*}]$, which proves the result.
\end{proof}

\begin{remark} If $k$ is perfect, the separability assumption in the statement of Corollary~\ref{cor:SSLocalEulerClass} is automatically satisfied; we can always reduce to this case by the base-change $k\to k^{perf}$ following Remark~\ref{rems:NonperfectFields}.
\end{remark}

\begin{ex}\label{ex:Diagonal} As an example, suppose we have a local framing $e_1,\ldots, e_d$ for $V$ near $x$, local parameters $t_1,\ldots, t_d$, units $u_i\in \sO_{X,x}^\times$ and positive integers $n_i$    such that $s=\sum_iu_it_i^{n_i}e_i$; we call such a section ``diagonalizable''. The Scheja-Storch element is  $e_{t_*,s_*}=\prod_i u_it_i^{n_i-1}\in J(s_*)=\sO_{X,x}/(t_1^{n_1},\ldots, t_d^{n_d})$. If $d=1$, $n=n_1$, $u=u_1$,  the Scheja-Storch form  has class
\[
[B(u, n)]=\begin{cases} (n/2)\cdot H&\text{ for $n$ even}\\ \<u\>+(1/2)(n-1)\cdot H&\text{ for $n$ odd}
\end{cases}
\]
and in general $[B_{s_*, t_*}]=\prod_{i=1}^d [B(u_i, n_i)]$.  Since $x\cdot H=\rnk(x)\cdot H$  for $x\in \GW$, we see that $[B_{s_*, t_*}]=(1/2)(\prod_in_i)\cdot H$ if at least one $n_i$ is even and is $\<\prod_iu_i\>+(1/2)(\prod_in_i-1)\cdot H$ if all $n_i$ are odd. We can also express this as
\[
[B_{s_*, t_*}]=\<u\> n_\epsilon
\]
where $u=\prod_iu_i$ and $n=\prod_in_i$. 
\end{ex}

\begin{ex}\label{ex:NonDegn} We consider the simplest case  of a vector bundle $\pi:V\to X$ of rank $d=\dim_kX$ with a section $s:X\to V$ which is transverse to the zero-section. If $x\in X$ is a zero of $s$, choose a basis of sections $\lambda_1,\ldots, \lambda_d$ of $V$ in a neighborhood of $x$, and a system of parameters $t_1,\ldots, t_d\in \mathfrak{m}_x$. As in the proof  of Corollary~\ref{cor:SSLocalEulerClass}, we may assume that $k(x)=k$. If we write $s$ as $s=\sum_{i=1}^ds_i\lambda_i$, the condition that $s$ is transverse to the zero-section at $x$ translates into the the fact that the matrix
\[
\del s/\del t_{|x}:=\begin{pmatrix}\del s_i/\del t_j\end{pmatrix}\in M_{d\times d}(k(x))
\]
is invertible.  We then have $J(s_*)=k$ and the Scheja-Storch form is $\del s/\del t_{|x}$. By 
Corollary~\ref{cor:SSLocalEulerClass}, we thus have 
\begin{equation}\label{eqn:localNDFormula}
e_x(V;s)=\<\det(\del s/\del t_{|x})\>\otimes \wedge\lambda_*^{-1}\otimes\wedge\del/\del t_*.
\end{equation}
\end{ex}

\begin{remark} Although we are restricting to the case of a bundle of rank equal to the dimension of the base-scheme, this is not an essential restriction. The local Euler class $e_{s=0}(V,s)$ for a vector bundle $V\to Y$ is determined by the restriction to $\Spec \sO_{Y,y}$ for all generic points $y$ of $(s=0)$. Working over a perfect field $k$, we can always find a subfield $K$ of $\sO_{Y,y}$ such that $k(y)$ is a separable extension of $K$. Thus, we can reduce to the case $\text{rank}(V)=\dim Y$ if the section $s$ has zero-locus of codimension equal to the rank of $V$\end{remark}

\section{Cohomology of classifying spaces}

Let $G$ be an algebraic group over $k$. We use the so-called geometric classifying space to define $BG$ as an object of $\sH(k)$. This follows the method introduced by Totaro \cite{Totaro}, with details to be found in \cite[\S 4.2]{MorelVoevodsky}. One takes a faithful representation $\rho:G\to \GL_N$ for some $N$ and considers an increasing sequence of compatible $\GL_N$-representations $V_0\subset V_1\subset\ldots\subset V_m\subset\ldots$ and $\GL_N$-stable open subsets $U_m\subset V_m$ such that $\GL_N$ acts freely on $U_m$ and such that the codimension $c_m$ of $W_m:=V_m\setminus U_m$ in $V_m$ goes to infinity with $m$. In addition,   one assumes that the inclusions $i_m:V_m\to V_{m+1}$ satisfy $i_m^{-1}(U_{m+1})=U_m$ and that the $i_m$ are split by   $\GL_N$-equivariant linear projections $\pi_m:V_{m+1}\to V_m$ with $W_{m+1}\subset p_m^{-1}(W_m)$. Setting $X_m:=G\backslash U_m$, one then defines $B_\gm G$ as the colimit (in the category of Nisnevich sheaves on $\Sm_k$)
\[
B_\gm G:=\colim_m X_m.
\]
This is the same as the Nisnevich sheaf represented by the ind-scheme $(X_m)_m$. 
We will write $BG$ for $B_\gm G$; we will only use this construction for $G=\GL_n, \SL_n$ and products of these groups. 

By  \cite[Lemma 2.5]{MorelVoevodsky}, the corresponding object  $BG\in\sH(k)$ is independent of the various choices.  One such choice (given $N$) is to take $V_m\cong \A^{N\cdot (N+m)}$ to be the space of $N\times N+m$ matrices  with $\GL_N$ acting by left multiplication, and $U_m\subset V_m$ to be the matrices of rank $N$. The inclusion $V_m\to V_{m+1}$ is given by adding a 0 column at the right, and the projection is the projection to the first $m+N$ columns.  In this case $W_m:=V_m\setminus U_m$ has codimension $c_m=m+1$. 

We first concentrate on the two cases $G=\GL_n, \SL_n$.  For $G=\GL_n$ with the identity embedding, this choice yields $X_n=\Gr(n, m+n)$ and for $G=\SL_n$ with the standard inclusion $\SL_n\subset \GL_n$, we have  $X_n=\tilde{\Gr}(n,m+n)$, the $\G_m$-bundle   $\det E_{n,m+n}\setminus 0_{\Gr(n,m+n)}$  over $\Gr(n, m+n)$, where $E_{n,m+n}\to \Gr(n, m+n)$ is the tautological rank $n$ vector bundle. For $G$ a product, $G=\prod_{i=1}^r\GL_{n_i}\times \prod_{j=1}^s\SL_{m_j}$, we will use the product of these choices, so $X_m=\prod_{i=1}^r\Gr(n_i, n_i+m)\times \prod_{j=1}^s\tilde{\Gr}(m_j, m_j+m)$.

By a vector bundle on $BG$, we mean a choice of vector bundles $E_m\to X_m$ for each $m$ together with isomorphisms $\psi_m:E_{m+1|X_m}\xrightarrow{\sim} E_m$; we similarly define $\G_m$-bundles, etc. Since $\Pic(U_m)$ is trivial for all $m$, one has a canonical isomorphism of $\Pic(X_m)$ with the group of characters of $G$, and this isomorphism is compatible with  the closed immersions $X_m\hookrightarrow X_{m+1}$, so the group $\Pic(BG)$ of isomorphism classes of line bundles on $BG$ is isomorphic to the group of characters of $G$.   For example the system of vector bundles $(E_{n,m+n}\to \Gr(n, m+n))_m$ defines the {\em tautological} vector bundle $E_n\to \BGL_n$  and the pullbacks $(\tilde{E}_{n,m+n}\to \tilde{\Gr}(n, m+n))_m$ defines the tautological vector bundle $\tilde{E}_n\to \BSL_n$. Similarly, the line bundles $(\det E_{n,m+n}\to \Gr(n, m+n))_m$ define the line bundle $\det E_n\to \BGL_n$, which is a generator of $\Pic(\BGL_n)\cong \Z$. For $G=\prod_{i=1}^r \GL_{n_i}\times\prod_{j=1}^s\SL_{m_j}$, $\Pic(BG)\cong \Z^r$, with basis the pullback of the line bundles $\det E_{n_i}\to \BGL_{n_i}$, $i=1,\ldots, r$.

\begin{lemma}\label{lem:HMConst} Let $\rho:G\to \GL_N$ be a faithful representation. Let $M_*$ be a homotopy module, let $V_m, U_m, W_m$ be as chosen above and let $c_m$ denote the codimension of $W_m$ in $V_m$. Let $\chi:G\to \G_m$ be a character and let $L\to  BG$ be the corresponding line bundle Let $X_m=G\backslash U_m$ with inclusion $i_m:X_m\to X_{m+1}$ and let $L_m\to X_m$ be the line bundle corresponding to $\chi$. Then for $c_m>p+1$, and $q\in \Z$ arbitrary, the restriction map $i_m^*:H^p(X_{m+1}, M_q(L_{m+1}))\to H^p(X_m, M_q(L_m))$ is an isomorphism.
\end{lemma}

\begin{proof} Let $U_{m+1}'=V_{m+1}\setminus \pi_m^{-1}(W_m)\cong U_m\times \ker \pi_m$. The projection $\pi_m:U_{m+1}'\to U_m$ realizes $U_{m+1}'$ as a $G$-vector bundle over $U_m$ with 0-section $i'_m$ induced by $i_m$. Thus, letting $X'_{m+1}:=G\backslash U_{m+1}'$,  $\pi_m$ induces a projection  $p_m:X'_{m+1}\to X_m$ making $X'_{m+1}$ a vector bundle over $X_m$. Since $M_q(L_m)$ is strictly $\A^1$-invariant, we have the isomorphism 
\[
p_m^*:H^p(X_m, M_q(L_m))\to H^p(X_{m+1}', M_q(p_m^*L_m))
\]
inverse to $i_m^*:H^p(X_{m+1}', M_q(p_m^*L_m))\to H^p(X_m, M_q(L_m))$.

We have the open immersion $j:X_{m+1}'\to X_{m+1}$ with complement $Y_{m+1}:=G\backslash(\pi_m^{-1}(W_m)\setminus W_{m+1}$ and a canonical isomorphism $j^*L_{m+1}\cong p_m^*L_m$. Since $W_m$ has codimension $c_m$ in $V_m$, $Y_{m+1}$ has codimension $c_m$ in $X_{m+1}$. By \eqref{eqn:CohVanishing}, $H^j_{Y_{m+1}}(X_{m+1}, M_q(L_{m+1}))=0$ for $c_m>j$, and thus 
\[
j^*:H^p(X_{m+1}, M_q(L_{m+1}))\to H^p(X'_{m+1}, M_q(p_m^*L_m))
\]
is an isomorphism for $c_m>p+1$. Since $i_m=j\circ i_m'$, we see that $i_m^*:H^p(X_{m+1}, M_q(L_{m+1}))\to H^p(X_m, M_q(L_m))$ is an isomorphism for $c_m>p+1$.
\end{proof}

\begin{proposition}\label{lem:HtpyModCoh} Let $M_*$ be a homotopy module and let $L\to BG$ be a line bundle. The the pro-system $(H^{a-b}(X_m, M_q(L)))_m$ is eventually constant. Moreover, the canonical map
\[
\EM(M_*(L))^{a,b}(BG)\to \lim_\leftarrow H^{a-b}(X_m, M_b(L))
\]
is an isomorphism and the restriction map
\[
\EM(M_*(L))^{a,b}(BG)\to   H^{a-b}(X_m, M_b(L)) 
\]
is an isomorphism for all $m$ sufficiently large.
\end{proposition}

\begin{proof} The first assertion is just a rephrasing of Lemma~\ref{lem:HMConst}. For the second, we have the Milnor sequence
\begin{multline*}
0\to R^1\lim H^{a-b-1}(X_m, M_b(L))\to \EM(M_q(L))^{a,b}(BG)\\\to  \lim_\leftarrow H^{a-b}(X_m, M_q(L))\to0.
\end{multline*}
Since the system $(H^{a-b-1}(X_m, M_q(L)))_m$ is eventually constant, the $R^1\lim $ vanishes.
\end{proof}
We will write $ H^{a}(BG, M_b(L))$ for $\EM(M_*(L))^{a+b,b}(BG)$.

\begin{lemma}\label{lem:WittKuenneth} Let $Y$ be in $\Sm_k$, let $X$ be a  smooth $k$-scheme, $M\in \Pic(X)$, $L\in \Pic(Y)$ and fix an integer $N$. Suppose that for each finitely generated field extension $F$ of $k$, and for $n\le N$,   
\[
H^n(X_F, \sW(M))\cong W(F)\otimes_{W(k)}H^n(X, \sW(M)), 
\]
with map induced by the pullback map $H^n(X, \sW(M))\to H^n(X_F, \sW(M))$. Suppose in addition that  $H^n(X, \sW(M))$ is a projective $W(k)$-module.   Then the external product map
\[
\oplus_{a+b=n}H^a(Y, \sW(L))\otimes_{W(k)}H^b(X, \sW(M))\to
H^n(Y\times_kX, \sW(p_1^*L\otimes p_2^*M))
\]
is an isomorphism for all $n< N$. 
\end{lemma}

\begin{proof}  Let $Y^{(p )}$ be the set of codimension $p$  points of $Y$. We take the ind-stratification of $Y\times_kX$ with codimension $p$ stratum $Y^{(p )}\times_k X$.  Gluing together the corresponding sequences of cohomology with support gives the spectral sequence
\begin{multline*}
E_1^{p,q}:=H^{p+q}_{Y^{(p )}\times_kX}(Y\setminus Y^{(p+1)}\times_k X, \sW(p_1^*L\otimes p_2^*M))\\
\Rightarrow H^{p+q}(Y\times_k X, \sW(p_1^*L\otimes p_2^*M)).
\end{multline*}
By purity   we have the isomorphisms
\[
E_1^{p,q}\cong \oplus_{y\in Y^{(p )}}H^{q}( k(y) \times_kX, \sW(p_1^*(L\otimes \det^{-1}\mathfrak{m}_y/\mathfrak{m}_y^2)\otimes p_2^*M)),
\]
where $\mathfrak{m}_y\subset \sO_{Y,y}$ is the maximal ideal.  Since $L\otimes \det^{-1}\mathfrak{m}_y/\mathfrak{m}_y^2$ is (non-canonically) isomorphic to $k(y)$, our assumption on $X$ implies that 
 the external product gives a canonical isomorphism for $q\le N$
\begin{multline*}
E_1^{p,q}\cong \sW(k(y), L\otimes \det^{-1}\mathfrak{m}_y/\mathfrak{m}_y^2\otimes)\otimes_{W(k)} 
H^{q}(X, \sW(M))\\\xrightarrow{\sim}
H^{q}(k(y)\times_k X, \sW(p_1^*(L\otimes \det^{-1}\mathfrak{m}_y/\mathfrak{m}_y^2)\otimes p_2^*M)).
\end{multline*}

We have the analogous spectral sequence
\[
E_1^{p,q}(Y):= H^{p+q}_{Y^{(p)}}((Y\setminus Y^{(p+1)}, \sW(L))
\Rightarrow H^{p+q}(Y, \sW(L))
\]
Since $H^n(X, \sW(L))$ is a projective $W(k)$-module for $n\le N$, this spectral sequence gives rise to a truncated spectral sequence
\begin{multline*}
\tilde{E}_1^{p,q}:=[H^*_{Y^{(p)}}((Y\setminus Y^{(p+1)}), \sW(L))\otimes_{W(k)}H^*(X, \sW(M))]^{\prime p+q}\\
\Rightarrow [H^*(Y, \sW(L))\otimes_{W(k)}H^*(X, \sW(M))]^{\prime\prime p+q},\quad p+q\le N
\end{multline*}
where the ${}'$ means that the term is zero if $p+q>N$ and is the same as without the ${}'$ if $p+q\le N$. The ${}''$ means  the same as the ${}'$ if $p+q\neq N$ and we ignore what the spectral sequence converges to for $p+q= N$.  

Pull-back by the  projection $p_1:Y\times_kX\to Y$ together with the cup product action via $p_2^*$ of  $H^*(X, \sW(M))$ on $H^*_{F\times X}(Y\setminus G\times X, \sW(p_1^*L\otimes p_2^*M))$ for $G\subset Y$ and $F\subset Y\setminus G$ both closed gives a map of spectral sequences 
\[
p_1^*\cup p_2^*:\tilde{E}_*^{*,*}\to E^{\prime*,*}_*
\]
which is an isomorphism on the $E_1$-terms, where we truncate the terms in  spectral sequence $ E^{*,*}_*$ to be zero if $p+q>N$ to form the spectral sequence $E^{\prime*,*}_*$. Since both spectral sequences are strongly convergent in total degree $<N$, we see that the external product 
\[
p_1^*\cup p_2^*:H^*(Y, \sW(L))\otimes_{W(k)}H^*(X, \sW(M))\to
H^*(Y\times_kX, \sW(p_1^*L\otimes p_2^*M)) 
\]
is an isomorphism in total degree $<N$.
\end{proof}

\begin{proposition}\label{prop:GrKuenneth} Let $X=\prod_{i=1}^r\BGL_{n_i}\times \prod_{i=r+1}^{r+s}\BSL_{n_i}$. \\[5pt]
1. For each $p$, $\CH^p(X):=H^p(X, \sK^M_p)$ is a free, finitely generated abelian group. 
\\[2pt]
2. Let $L\to X$ be a line bundle. Then for each $p$ $H^p(X, \sW(L))$ is a finitely generated free  $W(k)$-module.\\[5pt]
Moreover, we have
\[
\CH^*(X)=\otimes_{\Z, i=1}^r\CH^*(\BGL_{n_i})\otimes_\Z\otimes_{\Z, i=r+1}^{r+s}\CH^*(\BSL_{n_i})
\]
and for $L=L_1\boxtimes\ldots\boxtimes L_{r+s}$, we have 
\begin{multline*}
H^*(X, \sW(L))\\=\otimes_{W(k), i=1}^rH^*(\BGL_{n_i}, \sW(L_i))\otimes_{W(k)}\otimes_{W(k), i=r+1}^{r+s}H^*(\BSL_{n_i}, \sW(L_i)).
\end{multline*}

\end{proposition}

\begin{proof} It is well-known that $\Gr(n,N)$ is a cellular variety and that
\[
\CH^*(\Gr(n, N))=\Z[c_1,\ldots, c_n]/(I_{n,N})
\]
where $c_i=c_i(E_{n, N})$, $E_{n, N}\to \Gr(n, N)$ the tautological rank $n$ vector bundle and
the ideal $I_{n,N}$ is homogeneous with generators in degree $>N-n$ (where we give $c_i$ degree $i$). As $c_1(\det E_{n,N})=c_1(E_{n,N})$, the localization sequence for the open immersion $\tilde{\Gr}(n, N)\to \det E_{n,N}$ gives
\[
\CH^*(\Gr(n, N))=\Z[c_1,\ldots, c_n]/(I_{n,N}, c_1)\cong \Z[c_2,\ldots, c_n]/(J_{n,N})
\]
with $J_{n,N}$ homogeneous with generators again in degree $>N-n$. By Lemma~\ref{lem:HtpyModCoh}, this shows that
\[
\CH^*(\BGL_n)=\Z[c_1,\ldots, c_n],\ \CH^*(\BSL_n)=\Z[c_2,\ldots, c_n].
\]

Since cellular varieties satisfy the K\"unneth formula for the Chow groups, a similar argument shows that
\begin{multline*}
\CH^*(\prod_{i=1}^r \Gr(n_i, N)\times\prod_{i=r+1}^{r+s}\tilde{\Gr}(n_i, N))\\=\otimes_{i=1}^r\CH^*(\Gr(n_i, N))\otimes_\Z \otimes_{i=r+1}^{r+s}\CH^*(\tilde{\Gr}(n_i, N))
\end{multline*}
with the tensor products over $\Z$. Thus by  Lemma~\ref{lem:HtpyModCoh}
\[
\CH^*(X)=\otimes_{i=1}^r\CH^*(\BGL_{n_i})\otimes_\Z\otimes_{i=r+1}^{r+s}\CH^*(\BSL_{n_i}),
\]
which proves (1).
 
 For (2),  Ananyevskiy \cite[Introduction]{Anan} computes 
\[
H^*(\BSL_n, \sW)
=\begin{cases} 
W(k)[p_1,\ldots, p_{n/2},e]/(p_{n/2}-e^2)& \text{for $n$ even}\\
W(k)[p_1,\ldots, p_{n-1/2}]&\text{for  $n$ odd}
\end{cases}
\]
In \cite[Theorem 4.1]{LevBeckerGottlieb}, we have shown that the pullback by the projection $\BSL_n\to\BGL_n$ induces an inclusion $H^*(\BGL_n, \sW(L))\subset H^*(\BSL_n, \sW)$ with image given by
\[
H^*(\BGL_n, \sW(L))
=\begin{cases} W(k)[p_1,\ldots, p_{[n/2]}]&\text{for }L=\sO_{\BGL_n}\\
e\cdot W(k)[p_1,\ldots, p_{n/2}]&\text{for $n$ even and }L=\det E_n\\
0&\text{for  $n$ odd and }L=\det E_n.
\end{cases}
\]
Applying Lemma~\ref{lem:HMConst}, and then using Lemma~\ref{lem:WittKuenneth} for the finite dimensional approximations to $\prod_{i=1}^r\BGL_{n_i}\times \prod_{i=r+1}^{r+s}\BSL_{n_i}$, we see that for $L=\boxtimes_{i=1}^r L_i$ we have 
\[
H^*(X, \sW(L))\cong \otimes_{i=1}^r H^*(\BGL_{n_i}, \sW(L_i))\otimes_{W(k)}\otimes_{i=r+1}^{r+s} H^*(\BSL_{n_i}, \sW)
\]
where all tensor products are over $W(k)$. This together with our description of $H^*(\BGL_n, \sW(L))$ and $H^*(\BSL_n, \sW)$ proves (2). 
\end{proof}

\begin{remark} The last two results for the product scheme $\prod_{i=1}^r\Gr(n_i, N_i)\times \prod_{j=1}^s \tilde{\Gr}(m_j, M_j)$ and the product ind-scheme $\prod_{i=1}^r\BGL_{n_i}\times \prod_{j=1}^s \BSL_{m_j}$ certainly hold more generally, but as we only need them in these cases, we have refrained from formulating our results in greater generality.
\end{remark}

\section{Decomposing the Chow-Witt Euler class}\label{sec:Decomp}

In this section we show that in universal cases,   the Milnor-Witt Euler class is determined by the associated top Chern class together with the Euler class in $\sW$-cohomology. Wendt \cite{Wendt} and Hornbostel-Wendt \cite{HornbostelWendt}) give a detailed description of the Milnor-Witt Chow groups of Grassmannians, which forms an essential part of the argument; we recall some aspects of this treatment here.

As mentioned at the end of \S~\ref{sec:SLOrient}, we have the sheaf $\sI\subset \sGW$ of augmentation ideals, that is, the kernel of the rank homomorphism $\rnk:\sGW\to \Z$.  We have the powers $\sI^m$ and the twisted version $\sI^m(L)\subset \sGW(L)$ fitting into an exact sequence
\[
0\to \sI^{m+1}(L)\to \sK^{MW}_m(L)\xrightarrow{\pi_m}\sK^M_m\to 0,
\]
where $\sI^m(L)=\sW(L)$ and $\sK^M_m=0$ for $m<0$. In addition, the graded subsheaf $\sI^{*+1}\subset \sK^{MW}_*$ forms a sub-homotopy module of $ \sK^{MW}_*$. Thus, we may apply Lemma~\ref{lem:HMConst} and Proposition~\ref{lem:HtpyModCoh} to  define $H^*(BG, \sI^m(L))$.

 Multiplication by $\eta\in K^{MW}_{-1}(k)$ induces the map $\times\eta:\sK^{MW}_m(L)\to\sK^{MW}_{m-1}(L)$  and the corresponding colimit is isomorphic to $\sW(L)$. This gives us the map $\phi_m:\sK^{MW}_m(L)\to \sW(L)$. 

\begin{proposition}\label{prop:Reduction}  Let $m$ be a non-negative integer. Let 
\[
X=\prod_{i=1}^r\BGL_{n_i}\times\prod_{j=1}^s\BSL_{m_i}, 
\]
and take $L\in \Pic(X)$. The map
\[
H^m(X, \sK^{MW}_m(L))\xrightarrow{(\phi_m, \pi_m)} H^m(X, \sW(L))\times \CH^m(X)
\]
is injective.
\end{proposition}

\begin{proof}
 Morel \cite[Th\'eor\`eme 5.3]{MorelPI}  (see also \cite[proof of Proposition 2.3.1]{AsokFaselEuler}) shows that $\sK^{MW}_m(L)$ fits into  a fiber diagram
\[
\xymatrix{
 \sK^{MW}_m(L)\ar[d]_{p_m}\ar[r]^{\pi_m}&\sK^M_m\ar[d]^{red}\\
 \sI^m(L)\ar[r]_{\rho_m}&\sK^M_m/2
 }
 \]
and multiplication by $\eta$ induces a commutative diagram
\[
\xymatrix{
 \sK^{MW}_m(L)\ar[d]_{p_m}\ar[r]^{\times\eta}&\sK^{MW}_{m-1}\ar[d]^{p_{m-1}}\\
 \sI^m(L)\ar[r] &\sI^{m-1}(L),
 }
 \]
 where $ \sI^m(L)\to \sI^{m-1}(L)$ is the inclusion for $m\ge1$, the canonical surjection $\sGW(L)=\sK^{MW}_0(L)\to \sW(L)$ for $m=0$ and the identity map $\sW(L)\to \sW(L)$ for $m<0$. This identifies $\sW(L)$ with $\colim_{m\to -\infty} \sI^m(L)$, giving the map $\tilde\phi_m: \sI^m(L)\to \sW(L)$, and factors $\phi_m:H^m(X, \sK_m^{MW}(L))\to H^m(X, \sW(L))$ as
 \[
H^m(X, \sK_m^{MW}(L))\xrightarrow{p_m} H^m(X, \sI^m(L))\xrightarrow{\tilde{\phi}_m}
H^m(X, \sW(L)).
\]

By Proposition~\ref{prop:GrKuenneth}, $H^m(X, \sW(L))$ is  a free $\sW(k)$-module.  We apply \cite[Lemma 2.3]{Wendt}, which shows that the map $\rho_m:H^m(X, \sI^m(L))\to H^m(X,\sK^M_m/2)$ is injective on the kernel of $\tilde{\phi}_m:H^m(X, \sI^m(L))\to H^m(X, \sW(L))$\footnote{Although the results of Wendt and Wendt-Hornbostel used here and throughout the proof are for smooth $k$-schemes rather than the ind-smooth scheme $X$, we may apply these results to $X$ by using the approximation result Proposition~\ref{lem:HtpyModCoh}}. Following the remark of Wendt \cite[\S 2.3]{Wendt}, a twisted version of \cite[Proposition 2.11]{HornbostelWendt} and the fact that  $\CH^m(X)$ has no non-trivial 2-torsion implies that the  map
\[
(p_m, \pi_m):H^m(X, \sK^{MW}_m(L))\to H^m(X, \sI^m(L))\times H^m(X, \sK^M_m)
\]
is injective. Since ${red}\circ \pi_m=\rho_m\circ p_m$, it follows that 
\[
(\phi_m, \pi_m):H^m(X, \sK^{MW}_m(L))\to H^m(X, \sW(L))\times H^m(X, \sK^M_m)
\]
is injective as well. Noting that $H^m(X, \sK^M_m)=\CH^m(X)$ finishes the proof.
 \end{proof}
 
We recall Ananyevskiy's $\SL_2$ splitting principle.

\begin{theorem}[Ananyevskiy \hbox{\cite{Anan}}]\label{thm:ASplit} Let $\sA\in \SH(k)$ be an $\SL$-oriented ring spectrum such that $\times\eta$ acts invertibly on $\sA^{*,*}(k)$. Let $\iota_n:\SL_2\times\ldots\times \SL_2\to \SL_{2n}$ be the block-diagonal embedding (with $n$ copies of $\SL_2$). Then the induced map
\[
\iota_n^*:\sA^{**}(\BSL_{2n})\to  \sA^{**}(\BSL_2\times\ldots\times \BSL_2)
\]
is injective. Moreover
\[
\sA^{**}(\BSL_2\times\ldots\times \BSL_2)\cong \sA^{**}(\BSL_2)^{\otimes n}
\]
where the tensor product is over $\sA^{**}(k)$.
\end{theorem}
This is not stated as such in \cite{Anan}, but follows directly from \cite[Theorem 6, Theorem 10]{Anan}. Using Proposition~\ref{prop:Reduction}, we can refine this to a $\GL_2$-splitting principle for Milnor-Witt cohomology

\begin{theorem}\label{thm:GL2SplittingPrinc}  Let $n$ be a positive integer. For $n=2m$ even, consider the block-diagonal embedding
\[
\iota_n:G_n:=(\GL_2)^m\to \GL_n
\]
and for $n=2m+1$ odd, consider the 
the block-diagonal embedding
\[
\iota_n:G_n:=(\GL_2)^m\times\GL_1\to \GL_n
\]
Then for $L\in \Pic \BGL_n$, $B\iota_n$ induces an injection
\[
B\iota_n^*:H^p(\BGL_n, \sK^{MW}_p(L))\to 
H^p(BG_n, \sK^{MW}_p(\iota_n^*L)).
\]
Moreover the map  $\BSL_2^{[n/2]}\to \BGL_n$ induced by the block-diagonal map 
$(\SL_2)^m\to \GL_n$ for $n=2m$ and $(\SL_2)^m\times\id\to \GL_n$ for $n=2m+1$ induces an 
 injection 
\[
H^p(\BGL_n, \sW(L))\to H^p((\SL_2)^{[n/2]}, \sW)
\]
\end{theorem}

\begin{proof} By Proposition~\ref{prop:Reduction}, we have the injection
\[
H^p(\BGL_n, \sK^{MW}_*(L))\to H^p(\BGL_n, \sW(L))\times \CH^p(\BGL_n).
\]
 \cite[Theorem 4.1]{LevBeckerGottlieb} and the fact that $\BSL_1=\{*\}$ implies that 
 $H^p(\BGL_1, \sW(L))=0$ for $p>0$ and all $L$, $H^0(\BGL_1, \sW(\det E_1))=0$ and 
 $H^0(\BGL_1, \sW)=W(k)$. Similarly,  \cite[Theorem 4.1]{LevBeckerGottlieb}, together with Ananyeskiy's $\SL_2$ splitting principle gives the injection  
\[
H^p(\BGL_n, \sW(L))\to H^p((\BSL_2)^{[n/2]}, \sW)
\]
factoring as
\[
H^p(\BGL_n, \sW(L))\xrightarrow{B\iota_n^*}H^p(BG_n, \sW(\iota_n^*L))\to   H^p((\BSL_2)^{[n/2]}, \sW),
\]
with the second map induced by the inclusion $\SL_2\to \GL_2$ (and $\{1\}\to \GL_1$ if $n$ is odd). 
The classical splitting principle shows that $B\iota_n^*:\CH^p(\BGL_n)\to \CH^p(BG_n)$ is injective, which completes the proof.
\end{proof} 

\begin{corollary}\label{cor:GLSplitting} Let $n$ be a positive integer, $L\in \Pic\BGL_n$. Then the map
\[
H^p(\BGL_n, \sK^{MW}_p(L))\to [H^*(\BSL_2, \sW)^{\otimes [n/2]}]^p\times [\CH^*(\P^\infty)^{\otimes n}]^p
\]
induced by the diagonal embeddings $(\SL_2)^{[n/2]}\to \GL_n$, $\GL_1^n\to \GL_n$  and the maps $\phi_2, \pi_1$ is injective. Here the first tensor product is over $W(k)$ and the second is over $\Z$.
\end{corollary}

\begin{proof} This follows from Proposition~\ref{prop:Reduction} and Theorem~\ref{thm:GL2SplittingPrinc}, together with  Theorem~\ref{thm:ASplit}  and the classical splitting principle for the Chow groups.
\end{proof}

\begin{remark} We recall that $H^*(\BSL_2, \sW)=W(k)[e^\sW(E_2)]$ and $\CH^*(\P^\infty)=\Z[c_1(E_1)]$. 
\end{remark}

\section{Dual bundles}\label{sec:DualBundle}

The main result of this section is the comparison of the Chow-Witt Euler classes for $E$ and the dual $E^\vee$.

\begin{theorem}\label{thm:Dual} Let $X$ be a smooth quasi-projective scheme over $k$, $E$ a rank $n$ vector bundle on $X$ and $E^\vee$ the dual bundle. Let 
\[
\psi:H^n(X, \sK^{MW}_n(\det^{-1}E^\vee))\to H^n(X, \sK^{MW}_n(\det^{-1}E))
\]
be the isomorphism induced by  $\psi_{\det^{-1}E}$ \eqref{eqn:SquareTwistIso}.  Then 
\[
\psi(e^\CW(E^\vee))=(-1)^ne^\CW(E)
\]
in $H^n(X, \sK^{MW}_n(\det^{-1}E))$.
\end{theorem}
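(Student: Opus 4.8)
The strategy is to use the Asok--Fasel description of the Euler class $e(E)\in H^n(X,\sK^{MW}_n(\det^{-1}E))$ as the primary obstruction to the existence of a nowhere-vanishing section of $E$ (equivalently, a splitting off of a trivial rank-one summand), together with the functoriality of this obstruction-theoretic construction. The key observation is that a vector bundle $E$ of rank $n$ and its dual $E^\vee$ are classified by maps to $BGL_n$ differing by the automorphism of $GL_n$ given by $A\mapsto (A^t)^{-1}$ (transpose-inverse). This automorphism acts on the relevant motivic homotopy sheaf controlling the obstruction, and the plan is to compute precisely how it acts.

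First I would recall the Asok--Fasel setup: for a rank $n$ bundle $E\to X$ with $n\geq 2$ (the case $n=1$ can be treated separately, or directly from Corollary~\ref{cor:LineBundleDual}), the complement of the zero section $E\setminus 0_X$ is, up to $\A^1$-homotopy over $X$, an $\A^1$-fibration with fiber $\A^n\setminus\{0\}\simeq S^{n-1}\wedge \G_m^{\wedge n}$; the first nonvanishing $\A^1$-homotopy sheaf of the fiber is $\mathbf{K}^{MW}_n$ (Morel), twisted by $\det^{-1}E$ to account for the nontriviality of $E$, and the Euler class $e(E)$ is the image of the canonical generator under the resulting obstruction map $H^n(X,\sK^{MW}_n(\det^{-1}E))$. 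The point is that this is all natural in $E$ with respect to bundle isomorphisms, and more importantly behaves predictably under the transpose-inverse operation.

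The core computation is the following: the transpose-inverse automorphism $\tau: GL_n\to GL_n$, $\tau(A)=(A^{-1})^t$, induces on $\mathbf{K}^{MW}_n$-valued obstruction theory the map that multiplies the class by $(-\langle -1\rangle)^n = \epsilon^n$ and simultaneously replaces the twist $\det^{-1}E$ by $\det E = (\det^{-1}E^\vee)^{-1}$ appropriately identified via $\psi_{\det^{-1}E}$. Concretely, on an open where $E$ is trivialized, a transition cocycle $\{A_{\alpha\beta}\}$ for $E$ becomes $\{(A_{\alpha\beta}^{-1})^t\}$ for $E^\vee$; tracing this through the \v{C}ech-style description of the obstruction class — exactly as in the cocycle computations of \S\ref{sec:LineBundles}, in particular Proposition~\ref{prop:EulerRank1Cocycle} and Corollary~\ref{cor:LineBundleDual} — the determinant contributes $\det(A_{\alpha\beta})^{-1}$, which flips the twist, while the effect on the $\mathbf{K}^{MW}_n$-coefficient (reflecting that the induced self-map of $\A^n\setminus\{0\}$ arising from inversion on each coordinate, $x_i\mapsto x_i^{-1}$, has ``$\A^1$-degree'' $(-\langle-1\rangle)^n$ by the relation $[u^{-1}]=-\langle-1\rangle[u]$ in $K^{MW}_1$ and multiplicativity of smash products of such maps, compare Lemma~\ref{lem:EulerSphere} and Morel's computation $\tau_{T,T}=\langle-1\rangle$) contributes the factor $\epsilon^n = (-\langle -1\rangle)^n$. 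Assembling, $\psi(e(E^\vee)) = (-\langle-1\rangle)^n e(E)$.

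\textbf{Main obstacle.} The delicate point is making the sign/twist bookkeeping rigorous: precisely identifying the self-equivalence of the Thom space (or of $\A^n\setminus\{0\}$, twisted) induced by $\tau$ and confirming its $\A^1$-homotopy class is multiplication by $\epsilon^n$, while keeping careful track of which canonical isomorphism of line bundles ($\det E^\vee \cong \det^{-1} E$, and hence $\det^{-1}E^\vee\cong\det E$) is being used to compare the two twisted cohomology groups — i.e.\ verifying that the isomorphism appearing in the statement is exactly $\psi_{\det^{-1}E}$ and not $\psi$ composed with some further unit. I expect to handle this by reducing, via the naturality of the obstruction class and a Jouanolou/trivialization argument, to an explicit local model $X = \Spec\sO_{X,x}$ with $E$ trivial, where the computation becomes the cocycle manipulation already carried out for line bundles in Corollary~\ref{cor:LineBundleDual}, applied $n$ times via the Whitney-type formula $e(V_1\oplus V_2)=e(V_1)\cup e(V_2)$ (Lemma~\ref{lem:EulerClassSum}) — this also cleanly covers $n=1$ and reduces the general rank-$n$ claim to the rank-one identity $\psi_L(e(L^{-1})) = -\langle-1\rangle\,\psi_L(\psi_L(e(L)))$ rearranged appropriately, with the $n$-fold product producing the exponent $n$.
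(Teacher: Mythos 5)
Your overall framework is the right one --- the paper also proves the rank $n\ge 2$ case by interpreting $e(E)$ as the Asok--Fasel obstruction class attached to the fibration $\A^n\setminus\{0\}\to \BGL_{n-1}\to\BGL_n$ and then computing the effect of the transpose-inverse automorphism $\alpha_n(g)={}^tg^{-1}$ on $\pi_{n-1}^{\A^1}(\A^n\setminus\{0\})\cong\sK^{MW}_n$. But the core computation, as you present it, has a genuine gap. The fiber of $\BGL_{n-1}\to\BGL_n$ is $\GL_n/\GL_{n-1}$, and the self-map induced by $\alpha_n$ is $g\,\GL_{n-1}\mapsto {}^tg^{-1}\GL_{n-1}$. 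This is \emph{not} coordinate-wise inversion $x_i\mapsto x_i^{-1}$ on $\A^n\setminus\{0\}$ --- that formula does not even define a morphism of $\A^n\setminus\{0\}$, only of $\G_m^n$, and the $\A^1$-weak equivalence $\A^n\setminus\{0\}\simeq S^{n-1}\wedge\G_m^{\wedge n}$ does not convert self-maps of the source into smash products of self-maps of the $\G_m$ factors. The correct model (which the paper uses) is the affine quadric $Q_{2n-1}=\{\sum x_iy_i=1\}\cong\GL_n/\GL_{n-1}$, on which $\alpha_n$ induces the swap $(x,y)\mapsto(y,x)$. Identifying the homotopy class of this swap requires real work: one shows that swapping two pairs $(x_i,y_i),(x_j,y_j)$ simultaneously is $\A^1$-homotopic to the identity via an $\SL_2$-action (elementary matrices), which disposes of even $n$ and leaves, for odd $n$, a single residual swap; that residual swap is then identified, via an explicit \v{C}ech/geometric-realization analysis of the projection $Q_{2n-1}\to\A^n\setminus\{0\}$, with $\Sigma^{n-1}$ of the map $t_n\mapsto t_n^{-1}$ on the last $\G_m$ factor, whence the factor $-\<-1\>$ from $[t^{-1}]=-\<-1\>[t]$. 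Your heuristic lands on the right answer but skips precisely this identification, which is the substance of the proof.

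Your proposed fallback --- reducing to the rank-one case by trivializing locally and applying the Whitney formula $n$ times --- also does not work. Restricting to opens where $E$ is trivial kills $e(E)$ (a trivialized bundle has a nowhere-vanishing section), and there is no splitting principle for Chow--Witt Euler classes (the paper emphasizes the absence of a projective bundle formula), so a general rank-$n$ bundle cannot be replaced by a sum of line bundles. The paper instead reduces to the universal bundle $E_{n,N}$ on an affine Grassmannian model of $\BGL_n$ and compares, via local trivializations of $\det E_{n,N}$ and the Thom isomorphism, the $\GL_n$-obstruction class with the $\SL_n$ one computed by Asok--Fasel, obtaining $e_{ob}(E)=\gamma_n\cdot e(E)$ for a universal unit $\gamma_n$; this comparison step is also missing from your outline. (The paper additionally records a slick alternate derivation of the odd-rank case from the even-rank case, using $p_1^*E\oplus p_2^*\sO_{\P^1}(1)$ on $X\times\P^1$ and Corollary~\ref{cor:LineBundleDual}; if you want to shortcut anything, that is the legitimate shortcut.)
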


In the forerunner \cite{LevQuad} of this article, we proved this result by identifying the Euler class as an obstruction class. 

\begin{proof} To simplify the notation, we drop the mentions of the isomorphism $\psi$. For $X$ a smooth quasi-projective $k$-scheme and $E\to X$ a rank $n$ bundle, we have a Jouanolou cover $p:\tilde{X}\to X$ of $X$,  an affine space bundle over $X$ with $\tilde{X}$ affine.  Since an affine space bundle is locally trivial in the Zariski topology (Hilbert theorem 90)  and  Milnor-Witt cohomology is $\A^1$-homotopy invariant \cite[Theorem 3.37]{MorelA1}, the Mayer-Vietoris property for Zariski cohomology implies that $p$ induces an isomorphism $H^n(X, \sK^{MW}_n(\det^{-1}E))\to H^n(\tilde{X}, \sK^{MW}_n(\det^{-1}p^*E))$. Thus it suffices to prove the result for $X$ affine, in which case   $E^\vee$ is globally generated and is thus $E$ is the pull-back of the tautological sub-bundle $E_n\to \BGL_n$ for some morphism $f:X\to \Gr(n, n+m)\subset \BGL_n$. Thus we need only show that $e^\CW(E_n^\vee)=(-1)^ne^\CW(E_n)$.

 The map of sheaves $\phi_n:\sK^{MW}_n\to\sW$ arises from a map of $\SL$-oriented theories, and in particular is compatible with the respective Euler classes of vector bundles; the same holds for $\pi_n:\sK^{MW}_n\to \sK^M_n$. Passing to $H^n(-, \sK^M_n)=\CH^n(-)$, the Euler class of $E_n$ is the top Chern class $c_n(E_n)\in \CH^n(\BGL_n)$, and it follows easily from the splitting principle that $c_n(E_n^\vee)=(-1)^nc_n(E_n)$, that is, $\pi_n(e^\CW(E_n^\vee))=\pi_n((-1)^ne^\CW(E_n))\in \CH^n(\BGL_n)$.

We have the Euler class $e^\sW(E_n)\in H^n(\BGL_n\sW(\det^{-1}(E_n)))$. In case $n$ is odd, it follows from Lemma~\ref{lemma:OddEulerClass} and the identity $\sW=\sK^{MW}_*[\eta^{-1}]$ that $e^\sW(E_n)=0=e^\sW(E_n^\vee)$ in $H^n(\BGL_n,\sW(\det^{-1}(E_n)))$.

 If $n=2m$ is even, we use Corollary~\ref{cor:GLSplitting}. By the naturality of the Euler classes, the image of $e^\CW(E_n)$ under the map
 \[
H^n(\BGL_n, \sK^{MW}_n(\det^{-1}(E_n)))\to [H^*(\BSL_2, \sW)^{\otimes_{W(k)} m}]^n\times [\CH^*(\BGL_2)^{\otimes m}]^n
\]
is $(e^\sW(\tilde{E}_2)^{\otimes m}, c_2(E_2)^{\otimes m})$, and similarly for the image of $e^\CW(E_n^\vee)$. This reduces us to the case  of a rank 2 bundle with trivialized determinant.

For $V\to X$ of rank 2, we have the canonical isomorphism $V\cong V^\vee\otimes \det V$ induced by the perfect pairing $V\times V\to \det V$, inducing the identity
\[
e^\sW(V)=e^\sW(V^\vee\otimes \det V)\in H^2(X, \sW(\det^{-1}V))
\]
An isomorphism $\rho:\det V\to \sO_X$ induces the isomorphism $\id_{V^\vee}\otimes\rho:
V^\vee\otimes \det V\to V^\vee$, giving the identity
\[
\rho_*(e^\sW(V^\vee\otimes \det V))=e^\sW(V^\vee)\in H^2(X, \sW(\det^{-1}V^\vee))
\]
The map $\rho_*$ involves the isomorphism $\det\rho^{-1}\circ \det\rho^\vee: \sW(\det^{-1}V)\cong \sW(\det^{-1}V^\vee))$. A direct computation shows that  $\det\rho^{-1}\circ \det\rho^\vee=\psi_{\det V}$, so
\[
\psi_{\det^{-1} V}(e^\sW(V^\vee))=e^\sW(V).
\]

Thus, we have the identities $\pi_n(e^\CW(E_n^\vee))=(-1)^n\pi_n(e^\CW(E_n))$ in $\CH^n(\BGL_n)$ and
$\phi_n(e^\CW(E_n^\vee))=(-1)^n \phi_n(e^\CW(E_n))$ in $H^n(\BGL_n, \sW(\det^{-1}(E_n)))$; Proposition~\ref{prop:Reduction} completes the proof.
\end{proof}

The corresponding identity for hermitian $K$-theory is  also valid and the proof is considerably easier. For $L, M\to X$ line bundles, the exact functor $-\otimes_{\sO_X} M$ on  complexes of $\sO_X$-modules induces the map
\[
\psi_M:\KO^{a,b}(X,L)\to  \KO^{a,b}(X,L\otimes M^{\otimes 2})
\]

\begin{theorem}\label{thm:KODual} Let $X$ be a smooth quasi-projective scheme over $k$, $E$ a rank $n$ vector bundle on $X$ and $E^\vee$ the dual bundle.   Then 
\[
\psi_{\det^{-1}E}(e^\KO(E^\vee))=(-1)^ne^\KO(E) 
\]
in $\KO^{2n,n}(X, \det^{-1}E)$.
\end{theorem} 

\begin{proof} We have the hyperbolic maps
\[
h_{\det^{-1}E, n}:K_0(X)=\KGL^{2n,n}(X)\to \KO^{2n,n}(X, \det^{-1}E)
\]
and
\[
h_{\det E, n}:K_0(X)=\KGL^{2n,n}(X)\to \KO^{2n,n}(X, \det E)
\]
with $h_{\det^{-1}E, n}(V)=(V\oplus (V^\vee\otimes\det^{-1}E[n])), h(\can_{\det^{-1}E[n]}))$, 
$h_{\det E, n}(V)=(V\oplus  (V^\vee\otimes\det E[n])),h(\can_{\det E[n]}))$, where $can_{L[n]}$ is the canonical pairing $V\times (V^\vee\otimes L[n])\to L[n]$ and 
\[
h(\can_{L[n]})=\begin{pmatrix}0&\can_{L[n]}\\\can_{L[n]}&0\end{pmatrix}. 
\]
Explicitly
\begin{multline*}
 \psi_{\det^{-1}E}((-1)^i(h_{\det E, n}(\Lambda^iE))=\psi_{\det^{-1}E}(h_{\det E, n}(\Lambda^iE[i]))\\= h_{\det^{-1} E,n}(\Lambda^iE\otimes \det^{-1}E[i])\\\cong
h_{\det^{-1} E,n}(\Lambda^iE^\vee[n-i])=(-1)^{n+i}h_{\det^{-1} E,n}(\Lambda^iE^\vee).
\end{multline*}
the isometry arising from exchanging the order of the summands $\Lambda^iE\otimes \det^{-1}E[i]=(\Lambda^iE^\vee[n-i])^\vee\otimes  \det^{-1}E[n]$ and $\Lambda^iE^\vee[n-i]=(\Lambda^iE\otimes \det^{-1}E[i])^\vee\otimes \det^{-1}E[n]$. 

For $n$ odd, we have
\[
e^\KO(E)=\sum_{i=0}^{n-1/2}(-1)^i h_{\det^{-1} E,n}(\Lambda^iE^\vee),\ 
e^\KO(E^\vee )=\sum_{i=0}^{n-1/2}(-1)^i h_{\det E,n}(\Lambda^iE)
\]
so $\psi_{\det^{-1}E}(e^\KO(E^\vee ))=(-1)^n e^\KO(E)$. 

For $n$ even, 
\[
e^\KO(E)=h_{\det^{-1}E,n}(\oplus_{i=0}^{n/2-1}(-1)^i\Lambda^iE^\vee) + (\Lambda^{n/2}E^\vee[n/2], q_{E,n/2})
\]
where $q_{E, n/2}$ is the restriction of $q_E$. We have a similar formula for $e^\KO(E^\vee)$. Arguing as above, we have 
\[
\psi_{\det^{-1}E}(h_{\det E,n}(\oplus_{i=0}^{n/2-1]}(-1)^i\Lambda^iE))=
h_{\det^{-1}E,n}(\oplus_{i=0}^{n/2-1}(-1)^i\Lambda^iE^\vee), 
\]
so  we need to show that
\[
\psi_{\det^{-1}E}((\Lambda^{n/2}E[n/2], q_{E^\vee, n/2}))=(\Lambda^{n/2}E^\vee[n/2], q_{E,n/2})
\]
in $\KO^{2n,n}(X)$.    This follows from the isomorphism $\Lambda^{n/2}E^\vee\cong \Lambda^{n/2}E\otimes \det^{-1} E$ defined  by the perfect pairing $\Lambda^{n/2}E\otimes \Lambda^{n/2}E\to \det E$. 
\end{proof} 

\section{Symmetric powers and tensor products} We give a few additional applications of the  results of \S\ref{sec:Decomp}.

Recall the hyperbolic element $h\in K^{MW}_0(k)$, $h=2+[-1]\eta=\<1\>+\<-1\>$, corresponding to the hyperbolic class $H\in \GW(k)$. Let $L\to X$ be a line bundle on $X\in \Sm_k$. The relations $\eta\cdot h=0$ and $\sK^M_n=\sK^{MW}_n(L)/\eta\cdot \sK^{MW}_{n+1}(L)$ show that the map $\times h:\sK^{MW}_n(L)\to \sK^{MW}_n(L)$ descends to the {\em hyperbolic map} of sheaves on $X$
\[
\bar{h}_L:\sK^M_n\to  \sK^{MW}_n(L)
\]
and $\pi_n\circ \bar{h}_L:\sK^M_n\to  \sK^{M}_n$ is multiplication by 2.

\begin{theorem} \label{thm:EulerClassSym} Let $V\to X$ be a rank two bundle on $X\in \Sm_k$. Suppose $\Char k=0$  or  $2n$ is prime to $\Char k$. Let $L=\det^{-1}\Sym^nV$.  Then there are universal integers $B_{i,n}$, $i=0, \ldots, [\frac{n-1}{2}]$, such that 
\[
e^\CW(\Sym^n V)= \begin{cases} \bar{h}_L(c_1(V)\cdot \sum_{i=0}^n \sum_{i=0}^{{(n-2)/2}}B_{i,n}c_1(V)^{2i}c_2(V)^{(n-2i)/2})\\&\hskip-70pt\text{ for $n$ even}\\
n!!e^\CW(V)^{(n+1)/2}\\\hskip 20pt+\bar{h}_L( \sum_{i=0}^{{(n-1)/2}}B_{i,n}c_1(V)^{2i}c_2(V)^{(n-2i+1)/2})\\
&\hskip-70pt\text{ for $n$ odd.}
\end{cases}
\]
Here $n!!=n(n-2)(n-4)\cdots  3\cdot 1$ for $n$ odd.  
\end{theorem}

\begin{proof} $\Sym^n V$ has rank $n+1$; we first compute $c_{n+1}(\Sym^n V)$. Suppose $V$ has Chern roots $\xi_1, \xi_2$. Then $\Sym^n V$ has Chern roots $\{(n-i)\xi_1+i\xi_2\}_{0\le i\le n}$ and thus
\[
c_{n+1}(\Sym^n V)=\prod_{i=0}^n((n-i)\xi_1+i\xi_2)
\]
Note that 
\begin{align*}
((n-i)\xi_1+i\xi_2)(((n-i)\xi_2+i\xi_1&)=i(n-i)(\xi_1^2+\xi_2^2)+(i^2+(n-i)^2)\xi_1\xi_2\\&=
i(n-i)c_1(V)^2+(2i-n)^2c_2(V)
\end{align*}
so
\[
c_{n+1}(\Sym^n V)=\begin{cases}(n/2)c_1(V)\cdot\prod_{i=0}^{(n-2)/2}i(n-i)c_1(V)^2+(n-2i)^2c_2(V)
\\&\hskip-50pt\text{ for $n$ even}\\
\prod_{i=0}^{(n-1)/2}i(n-i)c_1(V)^2+(n-2i)^2c_2(V)\\
&\hskip-50pt\text{ for $n$ odd}
\end{cases}
\]
This gives us the universal expression 
\[
c_{n+1}(\Sym^n V))=\begin{cases} c_1(V)\cdot \sum_{i=0}^{{(n-2)/2}}A_{i,n}c_1(V)^{2i}c_2(V)^{(n-2i)/2}
&\text{ for $n$ even}\\
\sum_{i=0}^{{(n-1)/2}}A_{i,n}c_1(V)^{2i}c_2(V)^{(n+1-2i)/2}
&\text{ for $n$ odd}
\end{cases}
\]
with the $A_{i,n}\in \Z$. In case $n>0$ is even, all the $A_{i,n}$ are even, and in case $n$ is odd, all the $A_{i,n}$ except for $A_{0,n}$ are even. Let
\[
B_{i,n}=\begin{cases} (1/2)A_{i,n}&\text{ for $n$ even, or for $n$ odd and $i>0$}\\
(1/2)(A_{0,n}-n!!)&\text{ for $n$ odd and $i=0$.}
\end{cases}
\]

By \cite[Theorem 8.1]{LevBeckerGottlieb}, 
\[
e^\sW(\Sym^n V)=\begin{cases} 0
&\text{ for $n$ even}\\
n!!\cdot e^\sW(V)^{(n+1)/2}&\text{ for $n$ odd.}
\end{cases}
\]
By Proposition~\ref{prop:Reduction} and the identities 
\begin{gather*}
\pi_{n+1}\circ \bar{h}_L= 2\cdot \id,\ \pi_{n+1}(e^\CW(-))=c_{n+1}(-),\\ \phi_{n+1}\circ \bar{h}_L=0,\ \phi_{n+1}(e^\CW(-))=e^\sW(-)
\end{gather*}
the result follows in case $V$ is the universal rank 2 bundle on $\BGL_2$; the result in general follows by using a Jouanolou cover for $X$, reducing to the  case of $V$ globally generated, and then pulling back from the universal case.
\end{proof}

Our next formula is for the Euler class of $V\otimes V'$, for $V, V'$ rank two bundles. The expression for the  Euler class in $\sW$-cohomology was worked out in \cite[Proposition 9.1]{LevBeckerGottlieb}; there is a perhaps surprising asymmetry in the formula, which we should explain.

Let $V, V'$ be rank two bundles on some $X\in \Sm_k$. We have the universal isomorphism
\[
\rho_{V,V'}: \det^{-1}(V\otimes V')\xrightarrow{\sim}(\det V)^{\otimes -2}\otimes (\det V')^{\otimes -2}
\]
which in terms of local framings $e_1, e_2$ for $V$ and $f_1, f_2$ for $V'$ sends
\[
[(e_1\otimes f_1)\wedge(e_2\otimes f_1)\wedge(e_1\otimes f_2)\wedge(e_2\otimes f_2)]^{-1}
\]
to 
\[
(e_1\wedge e_2)^{\otimes -2}\otimes (f_1\wedge f_2)^{\otimes -2}.
\]
Note that the diagram 
\[
\xymatrix{
 \det^{-1}(V\otimes V')\ar[d]_{\det^{-1}(\tau_{V,V'})}\ar[r]^-{\rho_{V,V'}}&(\det V)^{\otimes -2}\otimes (\det V')^{\otimes -2}\ar[d]^{\tau_{(\det V)^{\otimes -2}, (\det V')^{\otimes -2}}}\\
 \det^{-1}(V'\otimes V)\ar[r]^-{\rho_{V',V}}&(\det V)^{\otimes -2}\otimes (\det V')^{\otimes- 2}
}
\]
anti-commutes.  With this in mind, we recall the formula for $e^\sW(V\otimes V')$,
\begin{equation}\label{eqn:EulerClassTensorProd}
\rho_{V,V'*}(e^\sW(V\otimes V'))=e^\sW(V)^2 - e^\sW(V')^2\in H^4(X, \sW)
\end{equation}
For simplicity, we have omitted the canonical isomorphisms
\[
H^4(X, \sW((\det V)^{\otimes -2}\otimes (\det V')^{\otimes -2}))\cong H^4(X, \sW)
\]
and
\[
H^2(X, \sW(\det^{-2}(V))\cong H^2(X,\sW)\cong H^2(X, \sW(\det^{-2}(V'))
\]

\begin{theorem}\label{thm:TensorProd} Let $V, V'$ be rank two bundles on $X\in \Sm_k$. Then
\begin{multline}\label{eqn:TensorProduct}
\rho_{V,V'*}(e^\CW(V\otimes V'))=
e^\CW(V)^2+\<-1\>\cdot e^\CW(V')^2\\+
e^\CW(V)\cdot e^\CW(\det V')\cdot e^\CW(\det V\otimes \det V')\\+
e^\CW(V')\cdot e^\CW(\det V)\cdot e^\CW(\det V\otimes \det V')\\
-\bar{h}(c_2(V)c_2(V'))\in H^4(X, \sK^{MW}_4)
\end{multline}
\end{theorem}

\begin{proof} Let $L=(\det V)^{\otimes -2}\otimes (\det V')^{\otimes -2}$. Note that the terms
\begin{gather*}
e^\CW(V)\cdot e^\CW(\det V')\cdot e^\CW(\det V\otimes \det V'),\\ e^\CW(V')\cdot e^\CW(\det V)\cdot e^\CW(\det V\otimes \det V')
\end{gather*}
 in the right-hand side of \eqref{eqn:TensorProduct} are in 
$H^4(X, \sK^{MW}_4(L))\cong H^4(X, \sK^{MW}_4)$.  

As in the proof of Theorem~\ref{thm:EulerClassSym}, we may replace $X$ with $\BGL_2\times\BGL_2$ and take $V=p_1^*E_2$, $V'=p_2^*E_2$. We note that $e^\sW(L)=0$ for each line bundle $L$, and $\<-1\>=-1$ in $W(k)$,  so the expression on the right-hand side of the 
\eqref{eqn:TensorProduct} maps to $e^\sW(V)^2 - e^\sW(V')^2$ under the canonical map $\phi_4:H^4(X, \sK^{MW}_4)\to  H^4(X, \sW)$. By \cite[Proposition 9.1]{LevBeckerGottlieb}, the identity \eqref{eqn:TensorProduct} holds after applying $\phi_4$. 

The splitting principle gives
\begin{align*}
c_4(V\otimes V')&=c_2(V)^2+c_2(V')^2+
c_1(V)\cdot c_1(V')(c_2(V)+c_2(V'))\\
&\hskip 10pt+
c_1(V)^2\cdot c_2(V')+c_2(V)\cdot c_1(V')^2
-2c_2(V)c_2(V')\\
&=
c_2(V)^2+c_2(V')^2\\
&\hskip10pt
+c_2(V)\cdot c_1(\det V')\cdot c_1(\det V\otimes \det V')\\
&\hskip10pt+c_2(V')\cdot c_1(\det V)\cdot c_1(\det V\otimes \det V')\\
&\hskip 20pt
-2c_2(V)c_2(V')
\end{align*}
We note that  $\<-1\>$ maps to $1$ under the rank map  $\GW(k)\to \Z$, that $\pi_2(e^\CW(V))=c_2(V)$, $\pi_2(e^\CW(V'))=c_2(V')$ and  $\pi_2(e^\CW(L))=c_1(L)$ for $L$ a line bundle. Thus the identity \eqref{eqn:TensorProduct} holds after applying $\pi_4$, and then Proposition~\ref{prop:Reduction} completes the proof.
\end{proof}

\begin{remark} Using the $\GL_2$-splitting principle (Theorem~\ref{thm:GL2SplittingPrinc}) one reduces the proof of identities for the Euler classes of a functor  $\rho$ of representations applied to a sequence of bundles $V_1,\ldots, V_r$, to the case of direct sums of rank two bundles and line bundles. For instance, Theorem~\ref{thm:TensorProd} gives rise, at least in principle, to formulas for the Euler class $e^\CW$ of tensor products of bundles of arbitrary even ranks. 
\end{remark}

\section{Twisting a bundle by a line bundle}\label{sec:Twisting}
Rather than looking at the Euler class for tensor product of rank 2 bundles, as in Theorem~\ref{thm:TensorProd}, we wish to compute the Euler class of $V\otimes L$, for $L$ a line bundle and $V$ of arbitrary rank $r$.    

Here the situation is a bit more complicated. For example, there is no formula for $e^\CW(L\otimes M)$ in terms of $e^\CW(L)$ and $e^\CW(M)$ for $L, M\to X$ arbitrary line bundles. To see this, consider the universal case $X=\P^\infty\times\P^\infty$, $L=O(1,0)$, $M=O(0,1)$. Then we have
\begin{align*}
&e^\CW(L\otimes M)\in H^1(X, \sK^{MW}_1(L^{-1}\otimes M^{-1})),\\ 
&e^\CW(L)\in H^1(X, \sK^{MW}_1(L^{-1})), \\
&e^\CW(M)\in H^1(X, \sK^{MW}_1(M^{-1})). 
\end{align*}
Thus, if one wishes to express 
$e^\CW(L\otimes M)$ in terms of $e^\CW(L)$ and $e^\CW(M)$, one would need classes in $H^0(X, \sK^{MW}_0(L^{-1}))$ and $H^0(X, \sK^{MW}_0(M^{-1}))$. These groups are however both zero: By 
Proposition~\ref{prop:Reduction} and the vanishing of $H^*(\P^\infty, \sW(O(-1)))$,  $H^0(X, \sK^{MW}_0(M^{-1}))$ is a subgroup of $\CH^0(X)\cong \Z$. But restricting to $pt\times \P^\infty$ is an injective map from $\CH^0(X)$ to $\CH^0(\P^\infty)$, while Wendt's theorem \cite[Theorem 1.1]{Wendt} shows that $H^0(\P^\infty, \sK^{MW}_0(O(-1)))=0$, so the map $H^0(X, \sK^{MW}_0(M^{-1}))\to \CH^0(\P^\infty)$ is the zero map.

We will find a universal formula for $e^\CW(V\otimes L)$ if  $L\cong M^{\otimes 2}$ for some line bundle $M$ (Theorem~\ref{thm:TwistCW} ). For example, in the case of $V=L'$ a line bundle, we have the formula 
\[
e^\CW(L\otimes L')=\bar{h}_{L^{\prime-1}}(c_1(M))+e^\CW(L'), 
\]
where we use the comparison isomorphisms 
\begin{align*}
H^1(X, \sK^{MW}_1(L^{-1}\otimes L^{\prime-1}))&\cong 
H^1(X, \sK^{MW}_1(M^{-2}\otimes L^{\prime-1}))\\&\cong H^1(X, \sK^{MW}_1( L^{\prime-1}))
\end{align*}
and the hyperbolic map $\bar{h}_{L^{\prime-1}}:H^1(X, \sK^M_1)\to H^1(X, \sK^{MW}_1( L^{\prime-1}))$ to put all the classes in the same group. Passing to the first Chern classes, we recover the usual formula
\[
c_1(L\otimes L')=2c_1(M)+c_1(L').
\]
For the $\KO$-valued Euler classes, we also have a universal formula in case $V$ has even rank and $L$ is an arbitrary line bundle (Theorem~\ref{thm:TwistKO}).

We first consider the classes in Milnor-Witt cohomology and Witt cohomology.

\begin{theorem}\label{thm:TwistCW}  Let $V\to X$ be a rank $r$ vector bundle on $X\in \Sm_k$ and let $L$ be a line bundle. \\[5pt]
(1) Suppose $r=2m$ is even. Then 
\[
\psi_{L^{\otimes m}}(e^\sW(V\otimes L))=e^\sW(V) 
\]
in $H^r(X, \sW(\det^{-1}V))$.\\[2pt]
(2) Suppose we have an isomorphism $\rho:L\xrightarrow{\sim} M^{\otimes 2}$ for some line bundle $M$ on $X$. Then
\[
\psi_{M^{\otimes r}}\circ\rho^{\otimes-r}_*(e^\CW(V\otimes L))=e^\CW(V)+\bar{h}_{\det^{-1}V}(c_1(M)\cdot\sum_{i=1}^rc_{r-i}(V)\cdot c_1(L)^{i-1}).
\]
in $H^r(X, \sK^{MW}_r(\det^{-1}V))$ (see \eqref{eqn:TwistIso} for the definition of the map 
$\rho_*$ and \eqref{eqn:SquareTwistIso} for the map $\psi$).
\end{theorem}

\begin{proof} The map 
\[
\rho^{\otimes-r}_*:H^r(X, \det^{-1}(V\otimes L))\to H^r(X, \det^{-1}(V)\otimes M^{\otimes -2r}),
\]
is the map induced by the isomorphisms
\[
\det^{-1}(V\otimes L)\xrightarrow{\sim} \det^{-1}(V)\otimes L^{\otimes-r}\xrightarrow{\id\otimes\rho^{\otimes-r}}
\det^{-1}(V)\otimes M^{\otimes-2r}
\]
where the first isomorphism sends $(v_1\otimes \lambda_1)\wedge\ldots\wedge(v_r\otimes \lambda_r)$ to
 $(v_1\wedge\ldots \wedge v_r)\otimes(\lambda_1\otimes\ldots\otimes \lambda_r)$. Note that different choices of $\rho$ multiplies $\rho^{\otimes-r}_*$ by $\<u^r\>$ for some $u\in \sO_X^\times(X)$. Thus, if $r$ is even, 
$\rho^{\otimes-r}_*$ is independent of the choice of isomorphism $\rho$, and if $r$ is odd,  Lemma~\ref{lemma:OddEulerClass} implies $\rho^{\otimes-r}_*(e^\CW(V\otimes L))$ is independent of the choice of $\rho$. For this reason, we simplify the notation in the proof by assuming $L=M^{\otimes 2}$, $\rho=\id$, in case (2).
 
We first consider the universal situation, namely, on $X:=\BGL_r\times_k\P^\infty$, we consider the bundles $E_r\boxtimes \sO(1)\to X$ and $p_1^*E_r\to X$. Let $0\in \P^\infty(k)$ be the point $(1, 0,\ldots)$, giving the section $s_0:\BGL_r\to \BGL_r\times_k\P^\infty$. Since
\[
H^n(\P^\infty, \sW)=\begin{cases}W(k)&\text{ for }n=0\\0&\text{ else,}
\end{cases}
\]
(see, e.g., \cite[Theorem 4.1]{LevBeckerGottlieb} in the case $n=1$) the K\"unneth formula Proposition~\ref{prop:GrKuenneth} shows that $p_1^*:H^*(\BGL_r,\sW(M))\to H^*(\BGL_r\times_k\P^\infty, \sW(p_1^*M))$ is an isomorphism, with inverse the pull-back by $s_0^*$. In both cases (1) and (2), the comparison isomorphism $\psi_{\sO(2m)}$ restricts via $s_0*$ to the identity, hence 
 \[
\psi_{\sO(m)}(e^\sW(V\otimes \sO(1)))=e^\sW(V)
\]
if $\rnk(V)=2m$ or 
 \[
\psi_{\sO(r)}(e^\sW(V\otimes \sO(2)))=e^\sW(V)
\]
if $\rnk(V)=r$.

In general, using a Jouanolou cover we may assume that $X$ is affine. Pulling back by the classifying morphism $f=(f_V, f_L):X\to \BGL_r\times_k\P^\infty$  in case $V$ has even rank $r=2m$, or by  $f=(f_V, f_M):X\to \BGL_r\times_k\P^\infty$ in case   $L= M^{\otimes 2}$, we have
\[
\psi_{L^{\otimes m}}(e^\sW(V\otimes L))=e^\sW(V)
\]
if $V$ has rank $2m$ and
\[
\psi_{M^{\otimes r}}(e^\sW(V\otimes L))=e^\sW(V)
\]
if $L= M^{\otimes 2}$. 

This proves (1) and in case (2) Lemma~\ref{lem:HMConst}, Proposition~\ref{lem:HtpyModCoh}  and Proposition~\ref{prop:Reduction} reduce us to showing that  
\[
c_r(V\otimes L)=c_r(V)+ \pi_n\circ\bar{h}_{\det^{-1}V}(c_1(M)\cdot \sum_{i=1}^rc_1(L)^{i-1}\cdot c_{r-i}(V))
\]
Since $\pi_n\circ\bar{h}_{\det^{-1}V}$ is multiplication by 2, this follows from the formula
\[
c_r(V\otimes L)=c_r(V)+   \sum_{i=1}^rc_1(L)^i\cdot c_{r-i}(V),
\]
an easy consequence of the splitting principle.
\end{proof}

Here is the analogous result in hermitian $K$-theory. 

\begin{theorem}\label{thm:TwistKO}   Let $V\to X$ be a rank $r$ vector bundle on $X\in \Sm_k$ and let $L$ be a line bundle. \\[5pt]
(1) Suppose $r=2m$ is even. Then 
\[
\psi_{L^{\otimes m}}(e^\KO(V\otimes L))=e^\KO(V)+h_{\det^{-1}V}(\sum_{i=0}^{m-1}(-1)^i[L^{\otimes m-i}\otimes \Lambda^iV^\vee])
\]
 $\KO^{2r, r}(X, \det^{-1}V)$.\\[2pt]
(2) Suppose we have an isomorphism  $\rho:L\xrightarrow{\sim} M^{\otimes 2}$ for some line bundle $M$ on $X$. Then
\[
\psi_{M^{\otimes r}}\circ\rho^{\otimes-r}_*(e^\KO(V\otimes L))=e^\KO(V)+h_{\det^{-1}V,r}(\sum_{i=0}^{[(r-1)/2]}(-1)^i[M^{\otimes r-2i}\otimes \Lambda^iV^\vee])
\]
in $\KO^{2r, r}(X, \det^{-1}V)$.
\end{theorem}

\begin{proof} Suppose $r=2m$. Since
\[
e^\KO(V\otimes L)=(\oplus_{i=0}^r\Lambda^i(V\otimes L)^\vee[i], q_{V\otimes L})
\]
we have
\[
\psi_{L^{\otimes m}}(e^\KO(V\otimes L))=(\oplus_{i=0}^rL^{\otimes m}\otimes \Lambda^i(V\otimes L)^\vee[i],\id\otimes q_{V\otimes L})
\]
Since
\[
L^{\otimes m}\otimes \Lambda^i(V\otimes L)^\vee=
L^{\otimes m-i}\otimes \Lambda^iV^\vee,\ L^{\otimes m}\otimes \Lambda^{r-i}(V\otimes L)^\vee=
L^{\otimes i-m}\otimes  \Lambda^{r-i}V^\vee
\]
we see that 
\[
\psi_{L^{\otimes m}}(e^\KO(V\otimes L))-
e^\KO(V)=\oplus_{i=0}^{[(r-1)/2]}h_{\det^{-1}V, r}(L^{\otimes m-i}\otimes \Lambda^iV^\vee[i])
\]
which proves case (1).

For case (2), we may assume as in the proof of Theorem~\ref{thm:TwistCW} that $L=M^{\otimes 2}$ and $\rho=\id$. We have 
\[
\psi_{M^{\otimes r}}(e^\KO(V\otimes L))=(\oplus_{i=0}^rM^{\otimes r}\otimes \Lambda^i(V\otimes L)^\vee[i],\id\otimes q_{V\otimes L})
\]
and 
\[
M^{\otimes r}\otimes \Lambda^i(V\otimes L)^\vee=
M^{\otimes r-2i}\otimes \Lambda^iV^\vee,\ M^{\otimes r}\otimes \Lambda^{r-i}(V\otimes L)^\vee=
M^{\otimes 2i-r}\otimes  \Lambda^{r-i}V^\vee
\]
so 
\[
\psi_{M^{\otimes r}}(e^\KO(V\otimes L))-
e^\KO(V)=\oplus_{i=0}^{[(r-1)/2]}h_{\det^{-1}V,r}(M^{\otimes r-2i}\otimes \Lambda^iV^\vee[i])
\]
\end{proof}

\section{Quadratic Riemann-Hurwitz formulas}\label{sec:Fibering}

We consider a projective  morphism $f:Y\to X$, with $Y$ a smooth projective integral $k$-scheme, and $X$ a smooth projective curve over $k$. Kass and Wickelgren have raised the question of finding Grothendieck-Witt liftings of classical Euler characteristic formulas for such maps and have obtained formulas of this type. We give a different approach here to this problem.

\begin{proposition}\label{prop:Fibering} Let $f$ be a surjective projective  morphism $f:Y\to X$, with $Y$ a smooth projective integral $k$-scheme of dimension $r$ over $k$, and $X$ a smooth projective curve over $k$.\\[5pt]
1.  Suppose that $X$ admits a half-canonical line bundle $M$, with isomorphism $\rho:\omega_{X/k}\to M^{\otimes 2}$.\footnote{This condition   is satisfied if for instance $X=\P^1_k$ or if $X$ is a hyperelliptic curve but is not satisfied if $X$ is a conic without a rational point.  A half-canonical line bundle is often referred to as a {\em theta characteristic}, see for example \cite{Atiyah, Mumford}.}
 Then
\begin{multline*}
\psi_{M^{\otimes -r}}\circ\rho^{\otimes r}_*(e^\CW(\Omega_{Y/k}\otimes f^*\omega_{X/k}^{-1}))=
e^\CW(\Omega_{Y/k})\\+\bar{h}_{\omega^{-1}_{X/k}}(c_1(f^*M^{-1})\cup c_{r-1}(\Omega_{Y/k})),
\end{multline*}
2. Suppose that $\dim_kX=2m$ is even. Then
\[
\psi_{\omega_{X/k}^{\otimes -m}}(e^\sW(\Omega_{Y/k}\otimes f^*\omega_{X/k}^{-1}))=
e^\sW(\Omega_{Y/k}).
\]
\end{proposition}

\begin{proof} This is just a special case of Theorem~\ref{thm:TwistCW}, noting that $c_1(f^*M^{-1})\cup c_1(f^*\omega_{X/k}^{-1})=f^*(c_1(M^{-1})\cup c_1(\omega_{X/k}^{-1}))=0$ since $X$ is a curve. Note that, just as remarked in the proof of Theorem~\ref{thm:TwistCW}, the choice of isomorphism $\rho$ does not play a role.
\end{proof}

Under the assumption that $X$ admits a half-canonical line bundle $M$, we may transform 
$e(\Omega_{Y/k}\otimes f^*\omega_{X/k}^{-1})\in H^r(Y, \sK^{MW}_r(\omega_{Y/k}^{-1}\otimes f^*\omega^{\otimes r}_{X/k}))$ to an element of $H^r(Y, \sK^{MW}_r(\omega_{Y/k}))$ by applying the isomorphism $\psi_{\omega_{Y/k}\otimes f^*M^{-\otimes r}}\circ \rho^{\otimes r}_*$, and the image of $e^\CW(\Omega_{Y/k}\otimes f^*\omega_{X/k}^{-1})$ is independent of the choice of isomorphism $\rho$ and choice of $M$.  We make a similar adjustment if $r$ is even, using $\psi_{\omega_{Y/k}\otimes f^*\omega_{X/k}^{-\otimes r/2}}$. We will omit the comparison isomorphism from the notation in what follows. For instance, we have the pushforward map
\[
\pi_{Y*}:H^r(Y, \sK^{MW}_r(\omega_{Y/k}))\to H^0(\Spec k, \sK^{MW}_0)=\GW(k)
\]
which induces pushforward maps
\[
\pi_{Y*}:H^r(Y, \sK^{MW}_r(\omega^{-1}_{Y/k}))\to H^0(\Spec k, \sK^{MW}_0)=\GW(k),
\]
and, if $r$ is even or if we have an isomorphism $\rho:\omega_{X/k}\to M^{\otimes 2}$,
\[
\pi_{Y*}:H^r(Y, \sK^{MW}_r(\omega^{-1}_{Y/k}\otimes f^*\omega_{X/k}^{\otimes r}))\to H^0(\Spec k, \sK^{MW}_0)=\GW(k).
\]
The pushforward on  $H^r(Y, \sK^{MW}_r(\omega^{-1}_{Y/k}))$ is induced by the pushforward map on $H^r(Y, \sK^{MW}_r(\omega_{Y/k}))$ by composing with $\psi_{\omega_{Y/k}}$, and  the pushforward on  $H^r(Y, \sK^{MW}_r(\omega^{-1}_{Y/k}\otimes f^*\omega_{X/k}^{\otimes -r}))$ is induced by the one on $H^r(Y, \sK^{MW}_r(\omega^{-1}_{Y/k}))$  by composing with $\psi_{f^*\omega_{X/k}^{\otimes r/2}}$ if $r$ is even, or by composing with $\psi_{f^*M^{\otimes r}}\circ (f^*\rho)^{\otimes -r}_*$ if we have an isomorphism $\rho:\omega_{X/k}\xrightarrow{\sim}M^{\otimes 2}$. As noted above, the value of this last pushforward on $e^\CW(\Omega_{Y/k}\otimes f^*\omega_{X/k}^{-1})$ is independent of the choice of isomorphism $\rho$ and choice of $M$.

\begin{theorem}\label{thm:Fibering1}  Let $f$ be a projective  morphism $f:Y\to X$, with $Y$ a smooth projective integral $k$-scheme of dimension $r$ over $k$, and $X$ a smooth projective curve over $k$. Let 
\[
D(f):=(1/2)[\deg_k(c_r(\Omega_{Y/k}\otimes f^*\omega_{X/k}^{-1}))-\deg(c_r(\Omega_{Y/k}))].
\]
1. $D(f)$ is an integer. \\\\
2. Suppose $X$ admits a half-canonical line bundle or $r$ is even. Then
\[
\pi_{Y*}(e^\CW(\Omega_{Y/k}\otimes f^*\omega_{X/k}^{-1}))=
(-1)^r\chi(Y/k)+D(f)\cdot H
\]
in $\GW(k)$. 
\end{theorem}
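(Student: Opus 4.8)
\textbf{Proof sketch for Theorem~\ref{thm:Fibering1}.}

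The plan is to separate the two assertions. For part (1), the key observation is that $\deg(c_r(\Omega_{Y/k}\otimes f^*\omega_{X/k}^{-1})) - \deg(c_r(\Omega_{Y/k}))$ is always even. Indeed, by the standard formula for the top Chern class of a tensor product with a line bundle, this difference equals $\deg\bigl(\sum_{i=1}^r c_1(f^*\omega_{X/k}^{-1})^i\, c_{r-i}(\Omega_{Y/k})\bigr)$, and every term with $i\ge 1$ carries a factor $c_1(f^*\omega_{X/k}^{-1}) = -f^*c_1(\omega_{X/k})$. Since $\omega_{X/k}\cong M_0^{\otimes 2}$ under the running hypothesis, $c_1(\omega_{X/k}) = 2\,c_1(M_0)$, so each summand is divisible by $2$ in $\CH^*(Y)$, hence so is the degree. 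Thus $D(f)\in\Z$. (Note that the hypothesis of part (1) as such does not even require a half-canonical bundle if one only wants $D(f)$ to be an integer modulo the eveness of $\deg c_r(\Omega_{Y/k})$; but we may simply invoke \eqref{eqn:RHAssumption} since it is in force.)

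For part (2), I would start from Proposition~\ref{prop:Fibering}, which gives an identity in $H^r(Y,\sK^{MW}_r(\omega_{Y/k}^{-1}\otimes f^*\omega_{X/k}))$, and — after applying the comparison isomorphism $\psi_{\omega_{Y/k}\otimes M^{-1}}$ tacitly, as the statement permits — push forward along $\pi_Y: Y\to\Spec k$. Pushforward is additive, so
\[
\pi_{Y*}(e(\Omega_{Y/k}\otimes f^*\omega_{X/k}^{-1})) = \pi_{Y*}(e(\Omega_{Y/k})) + \pi_{Y*}\bigl(\bar h\cdot c_1(f^*M^{-1})\cup(\textstyle\sum_{i=1}^r c_{r-i}(\Omega_{Y/k})\cup c_1(f^*\omega_{X/k}^{-1})^{i-1})\bigr).
\]
The first term is handled by Theorem~\ref{thm:Dual} together with Theorem~\ref{thm:comparison}: $\pi_{Y*}(e(\Omega_{Y/k})) = (-\<-1\>)^r\,\pi_{Y*}(e(T_{Y/k})) = (-\<-1\>)^r\chi(Y)$. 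For the second term, the factor $\bar h$ means this class lies in the image of the hyperbolic map $\bar h:\sK^M_r\to\sK^{MW}_r(-)$; since $h\cdot q = \operatorname{rank}(q)\cdot h$ and pushforward of a hyperbolic class is hyperbolic with rank given by the corresponding degree in Milnor $K$-theory (i.e., the ordinary degree of the Chow-theoretic class), we get
\[
\pi_{Y*}\bigl(\bar h\cdot c_1(f^*M^{-1})\cup(\textstyle\sum_{i=1}^r c_{r-i}(\Omega_{Y/k})\cup c_1(f^*\omega_{X/k}^{-1})^{i-1})\bigr) = \deg\bigl(c_1(f^*M^{-1})\cdot\textstyle\sum_{i=1}^r c_{r-i}(\Omega_{Y/k})\, c_1(f^*\omega_{X/k}^{-1})^{i-1}\bigr)\cdot h.
\]
It then remains to identify the integer coefficient with $D(f)$. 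Using $c_1(\omega_{X/k}^{-1}) = 2\,c_1(M^{-1})$ (half-canonical), the sum $c_1(f^*M^{-1})\cdot\sum_{i=1}^r c_{r-i}(\Omega_{Y/k})c_1(f^*\omega_{X/k}^{-1})^{i-1}$ equals $\frac12\sum_{i=1}^r c_1(f^*\omega_{X/k}^{-1})^i c_{r-i}(\Omega_{Y/k})$, whose degree is exactly $\frac12[\deg c_r(\Omega_{Y/k}\otimes f^*\omega_{X/k}^{-1}) - \deg c_r(\Omega_{Y/k})] = D(f)$ by the Chern-class computation already used in part (1).

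The main obstacle I anticipate is the careful bookkeeping of the line-bundle twists and of the hyperbolic-class pushforward formula: one must check that pushing forward a class of the form $\bar h\cdot\gamma$ (with $\gamma$ pulled back from $\sK^M_*$ along the quotient $q$) really does produce $\deg(\gamma)\cdot h$, which amounts to knowing that Fasel's Milnor--Witt pushforward is compatible with the Milnor-$K$-theory pushforward via the hyperbolic map $\bar h$ — this follows from the functoriality of $\bar h$ as a map of Gersten complexes (Remark~\ref{rem:HyperbolicLineBundle}) together with the fact that the Milnor-$K$-theory pushforward $\pi_{Y*}$ computes the degree. Everything else is formal manipulation with the identities $h\cdot q = \operatorname{rank}(q)\cdot h$ and $\<u\>\cdot h = h$, plus the two cited theorems.
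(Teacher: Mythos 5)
Your proposal is correct and follows essentially the same route as the paper: part (2) is exactly the paper's argument (Proposition~\ref{prop:Fibering}, pushforward, Theorem~\ref{thm:Dual} with Theorem~\ref{thm:comparison} for the first term, and compatibility of $\bar h$ with pushforward to convert the correction term into $D(f)\cdot h$). The one point to fix is in part (1): the theorem asserts $D(f)\in\Z$ \emph{unconditionally}, whereas your divisibility argument invokes the half-canonical hypothesis \eqref{eqn:RHAssumption}, which is only assumed in part (2). Since the degree is insensitive to base change, you should first pass to $\bar k$, where a theta characteristic always exists (this is what the paper does); alternatively, note that by the projection formula $\deg_Y\bigl(f^*c_1(\omega_{X/k}^{-1})\cup\alpha\bigr)=\deg(f_*\alpha)\cdot(2-2g_X)$ is even with no hypothesis on $X$ at all.
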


\begin{proof} To compute the degree, we may assume that $k$ is algebraically closed. Let $L=f^*\omega_{X/k}^{-1}$. Since $\omega_{X/k}$ has degree $2g_X-2$, and $k$ is algebraically closed, there is a half-canonical line bundle $M$ on $X$.   Since $X$ is a curve, $c_1(L)^2=0$, and we have 
\[
c_r(\Omega_{Y/k}\otimes f^*\omega_{X/k}^{-1}))- c_r(\Omega_{Y/k})=
c_1(L)\cup c_{r-1}(\Omega_{Y/k}) 
\]
so 
\begin{align*}
D(f)&=(1/2)\deg_k [c_1(L)\cup(c_{r-1}(\Omega_{Y/k})]\\
&=\deg_k [c_1(f^*M^{-1})\cup (c_{r-1}(\Omega_{Y/k})]
\end{align*}
so $D(f)$ is an integer. 

If $X$ admits a half-canonical line bundle $M$ (over the original field $k$), then by Theorem~\ref{thm:CWEulerClass}, Theorem~\ref{thm:Dual} and Proposition~\ref{prop:Fibering}, we have
\begin{align*}
\pi_{Y*}(&e^\CW(\Omega_{Y/k}\otimes f^*\omega_{X/k}^{-1}))\\&=
\pi_{Y*}[e^\CW(\Omega_{Y/k})+\bar{h}(c_1(f^*M^{-1})\cup c_{r-1}(\Omega_{Y/k}))]\\
&=\pi_{Y*}(e^\CW(\Omega_{Y/k}))+D(f)\cdot H\\
&=(-1)^r\chi(Y/k)+D(f)\cdot H.
\end{align*}

If on the other hand $Y$ has even dimension $2m$, then we have
\[
\pi_{Y*}(e^\sW(\Omega_{Y/k}\otimes f^*\omega_{X/k}^{-1}))=\pi_{Y*}(e^\sW(\Omega_{Y/k}))
\]
in $W(k)$, so $\pi_{Y*}(e^\CW(\Omega_{Y/k}\otimes f^*\omega_{X/k}^{-1}))-\pi_{Y*}(e^\CW(\Omega_{Y/k}))\in \GW(k)$ goes to zero under the canonical surjection $\GW(k)\to W(k)$. Thus 
\[
\pi_{Y*}(e^\CW(\Omega_{Y/k}\otimes f^*\omega_{X/k}^{-1}))-\pi_{Y*}(e^\CW(\Omega_{Y/k})=\ell\cdot H
\]
for some integer $\ell$. Applying the rank homomorphism gives
\[
\deg_k(c_r(\Omega_{Y/k}\otimes f^*\omega_{X/k}^{-1}))-\deg_k(c_r(\Omega_{Y/k}))=2\cdot\ell
\]
so $\ell=D(f)$. 

\end{proof}

We now turn to the discussion of the local invariants. As usual, a {\em critical point} of $f$ is a point $y\in Y$ with $df(y)=0$, a critical value of $f$ is a point $x=f(y)$ of $X$ with $y$ a critical point. We assume that $f$ has only finitely many critical points and let $\crit(f)$ denote the set of critical points.

In case $Y$ has odd dimension, we assume we have a half-canonical line bundle $M$  and an isomorphism $\rho:\omega_{X/k}\to M^{\otimes 2}$. Thus, we have a comparison isomorphism $\psi:\sK^{MW}_r(\det^{-1}(\Omega_{Y/k}\otimes f^*\omega^{-1}_{X/k}))\to
\sK^{MW}_r(\omega_{Y/k})$,
\[
\psi:=\begin{cases} \psi_{\omega_{Y/k}\otimes f^*\omega^{\otimes r/2}_{X/k}}&\text{ if $r$ is even}\\
\psi_{\omega_{Y/k}\otimes f^*M^{\otimes r}}\circ \rho_*^{\otimes -r}&\text{ if $r$ is odd, $M$ a theta characteristic.}
\end{cases}
\]

Let $y$ be a critical point of $f$, giving the Euler class with support
\[
e_y(df):=e_y(\Omega_{Y/k}\otimes f^*\omega_{X/k}^{-1}; df)\in H^r_y(Y, \sK^{MW}_r(\omega^{-1}_{Y/k}\otimes f^*\omega_{X/k}^{\otimes r})).
\]
Applying the comparison isomorphism  $\psi$ and the  purity isomorphism 
\[
\sK^{MW}_0(y)\cong H^r_y(Y, \sK^{MW}_r(\omega_{Y/k}))
\]
we will also consider $e_y(df)$ as an element of $\sK^{MW}_0(y)$. We have the pushforward
\[
i_{y*}:\sK^{MW}_0(y)\to H^r(Y, \sK^{MW}_r(\omega_{Y/k})).
\]

\begin{remark} If $r$ is odd, we have noted that $\psi(e^\CW(\Omega_{Y/k}\otimes f^*\omega_{X/k}^{-1}))$ does not depend on the choice of $M$ or $\rho$. However, this is not the case for the local Euler classes $e_y(df)$. Nonetheless, we will omit this dependence from the statements below, which remain valid for each such choice.
\end{remark}

\begin{corollary}\label{cor:Fibering1}   Let $f$ be a projective  morphism $f:Y\to X$, with $Y$ a smooth projective integral $k$-scheme of dimension $r$ over $k$, and $X$ a smooth projective curve over $k$.  Suppose  $f$ has only finitely many critical points. In addition, suppose that  $X$ admits a half-canonical line bundle in case $r$ is odd.Then
\[
(-1)^r\cdot \chi(Y/k)=\sum_{y\in \crit(f)}\pi_{Y*}i_{y*}e_y(df)-D(f)\cdot H
\]
in $\GW(k)$. 
\end{corollary}

\begin{proof} Forgetting supports sends the Euler class with supports 
\[
e_{df=0}(\Omega_{Y/k}\otimes f^*\omega_{X/k}^{-1}; df)\in H^r_{df=0}(Y, \sK^{MW}_r(\omega^{-1}_{Y/k}\otimes f^*\omega_{X/k}^{\otimes r}))
\]
to $e^\CW(\Omega_{Y/k}\otimes f^*\omega_{X/k}^{-1}; df)$ in $H^r(Y, \sK^{MW}_r(\omega^{-1}_{Y/k}\otimes f^*\omega_{X/k}^{\otimes r}))$. Applying the comparison isomorphism $\psi$ to 
$e^\CW(\Omega_{Y/k}\otimes f^*\omega_{X/k}^{-1}; df)$ and $\psi$ and the inverse of the Thom isomorphism to the local index $e_y(\Omega_{Y/k}\otimes f^*\omega_{X/k}^{-1}; df)$ as described in the paragraphs above, we have
\[
\psi(e^\CW(\Omega_{Y/k}\otimes f^*\omega_{X/k}^{-1}))= \sum_{y\in \crit(f)}i_{y*}e_y(df).
\]
Applying $\pi_{Y*}$  and using  Proposition~\ref{prop:Fibering}, this gives 
\[
(-1)^r\cdot\chi(Y/k)=\sum_{y\in \crit(f)}\pi_{Y*}i_{y*}e_y(df)-D(f)\cdot H
\]
in $\GW(k)$.
\end{proof}

\begin{remark} The rank of the term $D(f)\cdot H$ in Corollary~\ref{cor:Fibering1}  is $(-1)^{r-1}\cdot \chi^{top}(f^{-1}(x))\cdot\chi^{top}(X)$  for $x\in X$ a general geometric point. 
\end{remark}

In case $Y$ has odd dimension and there is a half-canonical bundle $\rho:\omega_{X/k}\xrightarrow{\sim}M^{\otimes 2}$, there is a normalization that picks out good local parameters at critical values of $f:Y\to X$. As explained in Remark~\ref{rems:NonperfectFields}, we may (and will) assume that $k$ is perfect.

Let $y\in Y$ be a critical point of $f$ and let $x=f(y)$ be the corresponding critical value. A parameter $t_x\in \mathfrak{m}_x$ is {\em normalized} if there is a generating section $\lambda_{M,x}$ of $M$ in a neighborhood of $x\in X$ such that
\[
\rho^{\otimes -1}(\del/\del t_x)=\lambda_{M,x}^{-2}\otimes k(x)
\]
via the canonical isomorphism $\omega_{X/k}^{-1}\otimes k(x)\cong (\mathfrak{m}_x/\mathfrak{m}_x^2)^\vee$. 

\begin{corollary}\label{cor:Fibering2}   Let $f$ be a   projective  morphism $f:Y\to X$, with $Y$ a smooth projective integral $k$-scheme of dimension $r$ over $k$, and $X$ a smooth projective curve over $k$.  If $r$ is odd, we assume $X$ admits a half-canonical line bundle $\rho:\omega_{X/k}\to M^{\otimes 2}$.  Suppose  $f$ has only finitely many critical points.

For each $y\in  \crit(f)$, we choose a system of parameters $t_1^y,\ldots, t^y_r\in \mathfrak{m}_y$ and a parameter $t_x\in \mathfrak{m}_x$, $x=f(y)$; if $r$ is odd, we assume that $t_x$ is  normalized. Write
\[
d(f^*(t_x))=\sum_{i=1}^r s_i^y\cdot dt_i^y;\quad s_i^y\in \sO_{Y,y}.
\]
We have the class of the Scheja-Storch form $[B_{s^y_*, t^y_*}]\in \GW(k(y))$. Then
\[
(-1)^r\chi(Y/k)= \sum_{y\in \crit(f)}\Tr_{k(y)/k}([B_{s^y_*, t^y_*}])-D(f)\cdot H 
\]
in $\GW(k)$. 
\end{corollary}

\begin{remarks} 1. In our earlier version of this paper \cite[Corollary 12.4]{LevQuad}, we had an assumption on the local behavior of $df$ ({\em diagonalizability}) that allowed an explicit computation of the local index 
without having to use the Scheja-Storch form; $df$ is always diagonalizable if $Y$ is a smooth curve. We also assumed in {\em loc. cit.} that the residue field extension $k(y)/k$ was separable. In another result,  \cite[Theorem 12.7]{LevQuad}, we made the expression in  \cite[Corollary 12.4]{LevQuad} even more explicit in the case of a tamely ramified map of curves. \\[2pt]
2. In \cite{BKW} the authors of that paper use a Scheja-Storch form to give a quadratic Riemann-Hurwitz formula for a separable map of smooth projection curves over a field $k$ that is also valid in the case of inseparable residue field extension and for wild ramification; their formula agrees with the one of  \cite[Corollary 12.4]{LevQuad}  in the case of a separable residue field extension. They raise the question (Remark 1.2(2)) of whether their explicit expression agrees with the abstract pushforward of the local index; this has been settled affirmatively in \cite{BachmannWickelgren}. Their formula also agrees with the one given above in the case of a perfect base-field $k$.
\end{remarks}

\begin{proof}[Proof of Corollary~\ref{cor:Fibering2}] By Corollary~\ref{cor:SSLocalEulerClass}, the local class $e_y(\Omega_{Y/k}\otimes f^*\omega_{X/k}^{-1}, df)$ is given by
\[
[B_{s^y_*, t^y_*}]\otimes \del/\del t_1^y\cdot df^*(t_x)\wedge\ldots \wedge \del/\del t_r^y\cdot df^*(t_x)\otimes \del/\del t_1^y\wedge\ldots\wedge\del/\del t_r^y
\]
in $\GW(k(y), \det^{-1}(\Omega_{Y/k}\otimes f^*\omega_{X/k}^{-1})\otimes \det^{-1}\mathfrak{m}_y/\mathfrak{m}_y^2)$.
Under the comparison isomorphism $\psi$, this gets sent to $[B_{s^y_*, t^y_*}] \in
\GW(k(y))$.

Applying the canonical isomorphism $i_{y*}:\GW(k(y))\xrightarrow{\sim} H^r_y(Y, \sK^{MW}_r(\omega_{Y/k}))$ and the forget the supports map, we have 
\[
\psi(e^\CW(\Omega_{Y/k}\otimes f^*\omega_{X/k}^{-1}))=
\sum_{y\in \crit(f)} i_{y*}\psi(e_y(\Omega_{Y/k}\otimes f^*\omega_{X/k}^{-1}))=
\sum_{y\in \crit(f)}[B_{s^y_*, t^y_*}]
\]
and thus by Corollary~\ref{cor:Fibering1}, we have
\[
(-1)^r\cdot \chi(Y/k)=\sum_{y\in \crit(f)} \pi_{y*}([B_{s^y_*, t^y_*}])-D(f)\cdot H
\] 
where  $\pi_y:\Spec k(y)\to \Spec k$ is the structure map. Since $k$ is perfect, $k(y)$ is a separable extension of $k$, so $\pi_{y*}$ is the trace map $\Tr_{k(y)/k}:\GW(k(y))\to \GW(k)$.  This completes the proof.
\end{proof}

Let $f:Y\to X$ be as before a   morphism of a smooth integral projective $k$-scheme 
$Y$ of dimension $r$ to a smooth projective curve $X$. 

Let $y\in Y$ be a critical point of $f$. Let  $t_1,\ldots, t_r$ be a system of parameters at $y$, and let $t_x\in\mathfrak{m}_x$ be a parameter. Since $y$ is a critical point of $f$,  $f^*(t_x)$ is in $\mathfrak{m}_y^2$ and thus
\[
f^*(t_x)=  \sum_{i\le j}a_{ij}t_it_j 
\]
for elements $a_{ij}\in \sO_{Y,y}$, uniquely determined modulo $\mathfrak{m}_y$. Let $\bar{a}_{ij}\in k(y)$ be the residue of $a_{ij}$ modulo $\mathfrak{m}_y$. Let
\[
h_{ij}=\begin{cases} \bar{a}_{ij}&\text{ if }i<j\\2\bar{a}_{ii}&\text{ if }i=j\\
\bar{a}_{ji}&\text{ if }i>j
\end{cases}
\]
The symmetric matrix 
\[
H(f)_y:=\begin{pmatrix}h_{ij}\end{pmatrix}
\]
is the  Hessian matrix of $f$ with respect to the chosen system of parameters.  The point $y$ is called a  non-degenerate critical point of $f$ if $H(f)_y$ is a non-singular matrix and $k(y)/k$ is a separable extension.

Let $y$ be a non-degenerate critical point of $f$, let $x=f(y)$. Choose a system of 
parameters $t_1,\ldots, t_r$ at $y$ and a parameter $t_x$ at $x$, and  let $H(f)_y=\begin{pmatrix}h_{ij}\end{pmatrix}$. The section $df$  satisfies
\[
df\equiv \sum_{i,j}h_{ij}t_idt_j \otimes \del/\del t_x\mod \mathfrak{m}_y^2.
\]
By Example~\ref{ex:NonDegn}, we have
\[
e_y(\Omega_{Y/k}\otimes f^*\omega_{X/k}^{-1}; df)=\<\det H(f)_y\>\otimes\del/\del t_x.
\]

\begin{corollary}\label{cor:Fibering3}  Let $f$ be a surjective projective  morphism $f:Y\to X$, with $Y$ a smooth projective integral $k$-scheme and $X$ a smooth projective curve over $k$. Suppose that $f$ has only non-degenerate critical points. For each $y\in \crit(f)$, let $x=f(y)$, choose a  parameter $t_x\in \mathfrak{m}_x$,  and let $\<\det H(f)_y\>\in \GW(y)$ be the corresponding 1-dimensional quadratic form. In case $Y$ has odd dimension, we assume that $X$ admits a half-canonical line bundle and that $t_x$ is normalized. Then 
\[
(-1)^r\cdot \chi(Y/k)= \sum_{y\in \crit(f)} \Tr_{k(y)/k}(\<\det H(f)_y\>)-D(f)\cdot H
\]
in $\GW(k)$.
\end{corollary}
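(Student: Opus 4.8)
The plan is to obtain this as a direct specialization of Corollary~\ref{cor:Fibering2}, feeding in the normal form for $df$ at a non-degenerate critical point recorded in the discussion above. First I would recall that, by definition, a non-degenerate critical point $y$ of $f$ has separable residue field $k(y)/k$ and non-singular Hessian matrix $H(f)_y$. Since $H(f)_y$ is a symmetric bilinear form over the field $k(y)$ and $\Char k\neq 2$, it can be diagonalized by a $k(y)$-linear change of the system of parameters $t_1,\dots,t_r\in\mathfrak m_y$; in such coordinates one has
\[
df_y\equiv\sum_{i=1}^r u_i^y\,t_i\,dt_i\otimes\partial/\partial t_x\mod\mathfrak m_y\cdot(t_1^2,\dots,t_r^2),
\]
where $x=f(y)$, $t_x\in\mathfrak m_x$ is a normalized parameter, and $u_i^y\in\sO_{Y,y}^\times$ has residue $\bar u_i^y\in k(y)$ equal to the $i$-th diagonal entry of $H(f)_y$. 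Thus $df$ is locally diagonalizable at every point of $\crit(f)$, with all exponents $n_i^y=1$, so the hypotheses of Corollary~\ref{cor:Fibering2} are satisfied.

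Next I would substitute these data into the formula of Corollary~\ref{cor:Fibering2}. Since $(1)_\epsilon=\sum_{i=0}^{0}\<-1\>^i=\<1\>$, the product $\prod_{i=1}^r(n_i^y)_\epsilon$ equals $\<1\>$, while $\<\prod_{i=1}^r\bar u_i^y\>=\<\det H(f)_y\>$ because the $\bar u_i^y$ are exactly the diagonal entries of the diagonalized Hessian. Corollary~\ref{cor:Fibering2} then reads
\[
\chi(Y)=D(f)\cdot h+(-\<-1\>)^r\sum_{y\in\crit(f)}\Tr_{k(y)/k}(\<\det H(f)_y\>),
\]
which is the claimed identity. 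To complete the argument I would check well-definedness of the right-hand side: under a $k(y)$-linear change of system of parameters with matrix $A\in\GL_r(k(y))$, the Hessian $H(f)_y$ transforms by congruence, so $\det H(f)_y$ is multiplied by the square $(\det A)^2$ and $\<\det H(f)_y\>\in\GW(k(y))$ is unchanged; the dependence on the auxiliary half-canonical bundle $M$ and the trivialization $\psi:M^{\otimes -2}\to\omega_{X/k}^{-1}$ is fully absorbed into the normalization condition $\partial/\partial t_x=\psi(\lambda_{M,x}^{-2})\otimes k(x)$ on $t_x$.

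I expect the only subtle point to be the bookkeeping of line-bundle twists, namely verifying that the comparison isomorphism $\psi_{\omega_{Y/k}\otimes M^{-1}}$ together with the normalization of $t_x$ combine so that the diagonal local contribution $e_y(df)$ is exactly $\<\det H(f)_y\>\in\sK^{MW}_0(y)$ rather than this class scaled by a unit coming from $\lambda_{M,x}$. This is precisely what the definition of ``normalized'' is designed to guarantee, and with it in hand the local computation reduces to the one already carried out, via Proposition~\ref{prop:LocalContribution}, in the proof of Corollary~\ref{cor:Fibering2}; everything else is formal.
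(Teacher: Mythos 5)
Your argument is correct and is exactly the paper's: diagonalize the Hessian by a linear change of parameters, observe that $df$ is then locally diagonalizable with all $n_i^y=1$ and $\prod_i\bar u_i^y=\det H(f)_y$, plug into Corollary~\ref{cor:Fibering2}, and note that $\det H(f)_y$ changes only by a square under change of parameters. The paper's proof is precisely "Corollary~\ref{cor:Fibering2} plus the preceding discussion," and your write-up is that discussion made explicit.
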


\begin{proof} This follows directly from Corollary~\ref{cor:Fibering2} and the preceding discussion.
\end{proof}

\begin{remark}\label{rem:Trace}  Suppose $X=\P^1_k$. Let $t=X_1/X_0$ be the standard parameter on $\A^1=\P^1\setminus\{(0:1)\}$.  We have a unique isomorphism
\[
\omega_{\P^1/k}^{-1}\cong O_{\P^1}(2)
\]
sending $\del/\del t$ to $X_0^2$ and $\del/\del t^{-1}$ to $-X_1^2$. We use the section $X_0$ of  $M:=O_{\P^1}(1)$ as our $\lambda_M$. For a closed point $x\in \A^1_k$, let $g_x\in k[t]$ be the monic irreducible polynomial  for $x$ over $k$. Then
\[
t_x:= (dg_x/dt)^{-1}g_x
\]
is a normalized local parameter at $x$.
\end{remark}

We apply these results to the case of a map of smooth projective curves $f:Y\to X$. For $p:C\to \Spec k$ a smooth projective curve over $k$, define
\[
g_{C/k}:=\dim_k H^0(C,\omega_{C/k}).
\]
This is the usual genus of $C$ if $C$ is geometrically integral over $k$. 

\begin{theorem}[Riemann-Hurwitz formula for curves]\label{thm:RH} Let $f:Y\to X$ be a separable surjective morphism of smooth integral projective curves over $k$. Suppose that $X$ admits a half-canonical bundle $M$ and fix an isomorphism $\rho: \omega_{X/k}\to M^{\otimes 2}$. For $y\in \crit(f)$, choose a parameter $t_y\in \mathfrak{m}_y$ and a normalized parameter $t_x\in \mathfrak{m}_x$, $x=f(y)$. Write
\[
f^*(t_x)=u_yt_y^{n_y}
\]
with $u_y\in \sO_{Y,y}^\times$ and let $\bar{u}_y\in k(y)^\times$ be the image of $u_y$.  Suppose that $k(y)$ is separable over $k$ and $n_y$ is prime to $\Char k$ for all $y\in\crit(f)$. Then
\[
\sum_{y\in\crit(f)}\Tr_{k(y)/k}(\<n_y\bar{u}_y\>(n_y-1)_\epsilon)=
(g_{Y/k}-1-\deg f\cdot (g_{X/k}-1))\cdot H
\]
in $\GW(k)$.
\end{theorem}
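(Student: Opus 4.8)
The plan is to obtain Theorem~\ref{thm:RH} as the one-dimensional case of Corollary~\ref{cor:Fibering2}, supplemented by the known value of $\chi(Y)$ for a curve and a short degree computation.

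First I would pin down the local data at a critical point. Fix $y\in\crit(f)$, put $x=f(y)$, and work in the discrete valuation ring $\sO_{Y,y}$ with uniformizer $t_y$; the hypothesis $f^*(t_x)=u_yt_y^{n_y}$ simply records that $n_y=v_y(f^*t_x)$ is the ramification index and $u_y\in\sO_{Y,y}^\times$. Writing $du_y=u_y'\,dt_y$, one computes the section $df$ of $\Omega_{Y/k}\otimes f^*\omega_{X/k}^{-1}$ as
\[
df_y=d(f^*t_x)\otimes f^*(\partial/\partial t_x)=t_y^{\,n_y-1}\bigl(n_yu_y+u_y't_y\bigr)\,dt_y\otimes f^*(\partial/\partial t_x).
\]
Since $n_y$ is prime to $\Char k$ and $\bar{u}_y\neq 0$, the coefficient $w:=n_yu_y+u_y't_y$ is a unit whose residue in $k(y)^\times$ is $n_y\bar{u}_y$; and because $y$ is a critical point, $n_y\ge 2$, so $n_y-1\ge 1$. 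Thus $df$ is locally diagonalizable near $y$ in the (single-variable) sense of \eqref{eqn:SupportConditions}, with $n_1^y=n_y-1$ and $\bar{u}_1^y=n_y\bar{u}_y$, and the $t_x$ in the statement is precisely a normalized parameter in the sense used in Corollary~\ref{cor:Fibering2}. I would also note that $\crit(f)$ is finite (it is the ramification locus of a separable dominant morphism of smooth integral curves) and that $k(y)/k$ is separable by hypothesis. Feeding these into Corollary~\ref{cor:Fibering2} with $r=1$ and $-\<-1\>=\epsilon$ gives
\[
\chi(Y)=D(f)\cdot h+\epsilon\cdot S,\qquad S:=\sum_{y\in\crit(f)}\Tr_{k(y)/k}\bigl(\<n_y\bar{u}_y\>(n_y-1)_\epsilon\bigr).
\]

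Next I would evaluate the two global quantities. By Example~\ref{ex:EulerCharCurve} (equivalently, by Theorem~\ref{thm:OddEulerChar} combined with $\deg\omega_{Y/k}=2g_{Y/k}-2$) we have $\chi(Y)=(1-g_{Y/k})\cdot h$. For $D(f)$: since $\Omega_{Y/k}$ is a line bundle, the $c_1$-degrees in the definition of $D(f)$ are ordinary degrees, the two occurrences of $\deg\omega_{Y/k}$ cancel, and one is left with $D(f)=-\frac12\deg(f)\,\deg\omega_{X/k}=-\deg(f)\,(g_{X/k}-1)$. Substituting, $\epsilon\cdot S=\bigl[(1-g_{Y/k})+\deg(f)(g_{X/k}-1)\bigr]h$; applying $\epsilon$ once more and using $\epsilon^2=\<1\>$ together with $\epsilon\cdot h=-\<-1\>\cdot h=-h$ yields
\[
S=-\bigl[(1-g_{Y/k})+\deg(f)(g_{X/k}-1)\bigr]h=\bigl(g_{Y/k}-1-\deg f\cdot(g_{X/k}-1)\bigr)\cdot h,
\]
which is exactly the claimed identity.

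The hard part is really the bookkeeping of the local term: checking that all the running hypotheses of Corollary~\ref{cor:Fibering2} are in force for $r=1$ (normalized parameter, local diagonalizability, and the fact that $n_y$ being prime to the characteristic is what keeps $u_1^y$ a unit), and that the exponent $n_y-1$ and the residue $n_y\bar{u}_y$ match the data $(n_y,\bar{u}_y)$ recorded in the statement; everything else is a degree count and formal algebra in $\GW(k)$. One point worth a remark is the identity $\chi(Y)=(1-g_{Y/k})h$ when $Y$ is integral but not geometrically integral, which one settles either by passing to the geometrically integral case over the algebraic closure of $k$ in $k(Y)$, or by reading $g_{Y/k}$ off Serre duality over $k$.
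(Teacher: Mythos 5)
Your proof is correct and runs on the same engine as the paper's: both reduce the theorem to Corollary~\ref{cor:Fibering2} with $r=1$, after checking that $df$ is locally diagonalizable with local data $(n_y\bar u_y,\,n_y-1)$ at each $y\in\crit(f)$. (Your computation, which keeps the term $u_y't_y^{n_y}\,dt_y$ and observes that $n_yu_y+u_y't_y$ is still a unit with residue $n_y\bar u_y$, is in fact more careful than the paper's, which silently drops that term; both land in the same class modulo $\mathfrak{m}_y\cdot(t_y^{n_y-1})$.) Where you diverge is the endgame. The paper never computes $D(f)$ or $\chi(Y)$: it only uses Theorem~\ref{thm:OddEulerChar} to see that both sides of the identity from Corollary~\ref{cor:Fibering2} are integer multiples of $h$, applies the rank homomorphism, and quotes the classical Riemann--Hurwitz formula to pin down the integer. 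You instead evaluate $\chi(Y)=(1-g_{Y/k})\cdot h$ via Example~\ref{ex:EulerCharCurve} and $D(f)=-\deg f\cdot(g_{X/k}-1)$ by a direct degree count, and then do the $\GW(k)$-algebra; this makes the argument more self-contained, since the classical formula drops out as the rank of your identity rather than being an input, at the cost of having to justify $\deg\omega_{C/k}=2g_{C/k}-2$ for $C=X,Y$. That last point --- delicate only when the curves are not geometrically integral, as you note --- is equally implicit in the paper's appeal to the classical formula with $g_{C/k}:=\dim_kH^0(C,\omega_{C/k})$, so you have not introduced any gap that the paper's own proof avoids.
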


\begin{proof} Since $f$ is separable and surjective, $\crit(f)$ is a finite set. Near $y\in \crit(f)$ we  have  
\[
df=n_yu_yt_y^{n_y-1}\otimes dt_y\otimes \del/\del t_x.
\]
By Corollary~\ref{cor:Fibering2}, we have
\[
\sum_{y\in\crit(f)}\Tr_{k(y)/k}(\<n_y\bar{u}_y\>(n_y-1)_\epsilon)=-\chi(Y)+D(f)\cdot H.
\]
Since $Y$ has odd dimension over $k$, $\chi(Y)=A\cdot H$ for some integer $A$, by Corollary~\ref{cor:OddEulerChar}. Thus
\[
\sum_{y\in\crit(f)}\Tr_{k(y)/k}(\<n_y\bar{u}_y\>(n_y-1)_\epsilon)=B\cdot H
\]
for some integer $B$. Applying the rank homomorphism gives
\[
\sum_{y\in \crit(f)} [k(y):k](n_y-1) =2B
\]
so the classical Riemann-Hurwitz formula tells us that 
\[
B=(g_{Y/k}-1-\deg f\cdot(g_{X/k}-1)).
\]
\end{proof}

\begin{remark} With notation as in Theorem~\ref{thm:RH}, suppose $y\in Y$ is a ramified point. Then 
\[
\<n_y\bar{u}_y\>\cdot (n_y-1)_\epsilon=\begin{cases}\frac{1}{2}(n_y-1)\cdot H&\text{ if $n_y$ is odd,}\\
\<n_y\bar{u}_y\>+\frac{1}{2}(n_y-2)\cdot H&\text{ if $n_y$ is even.}
\end{cases}
\]
We can rewrite the GW-Riemann-Hurwitz formula as
\begin{multline*}
\sum_{y\in\crit(f), n_y\text{ even }}\Tr_{k(y)/k}(\<n_y\bar{u}_y\>)\\=
\left(g_{Y/k}-1-\deg f\cdot (g_{X/k}-1)-\sum_{y\in\crit(f)}[k(y):k]\cdot\left[\frac{n_y-1}{2}\right]\right)\cdot H.
\end{multline*}
In other words, the ramification points with $n_y$ even impose a global relation in $\GW(k)$ beyond the numerical identity 
\[
2g_{Y/k}-2=\deg f\cdot (2g_{X/k}-2)+\sum_{y\in\crit(f)}[k(y):k]\cdot(n_y-1)
\]
given by the classical Riemann-Hurwitz formula.  One recovers the classical Riemann-Hurwitz formula by applying the rank map to the GW version.
\end{remark}

\begin{remark} Theorem~\ref{thm:RH} covers the case of tame ramification; in case of wild ramification over a perfect base-field, one can use Corollary~\ref{cor:Fibering2} and if the base-field is not perfect, one simply extends to the perfect closure.
\end{remark}

\begin{ex} We take $k=\R$. Suppose we have a surjective map $f:Y\to \P^1_k$ with $Y$ a smooth projective curve of genus $g$. Suppose in addition that $f$ is simply ramified, that is, $n_y\le 2$ for all $y\in Y$. Take a closed point $y\in Y$ with $n_y=2$. If $k(y)=\C$, then the trace form $(e_i,e_j)\mapsto\Tr_{\C/\R}(e_ie_ju)$ is hyperbolic for all $u\in \C^\times$. If $k(y)=\R$, then $\pi_{Y*}(e_y(df))$ is just the quadratic form $\<2\bar{u}_y\>$, using $t_x=t-f(y)$ as the normalized local parameter at $x=f(y)$ and writing $f^*(t_x)=u_yt_y^2$. Thus, the extra information in the GW-Riemann-Hurwitz formula is just that there are the same number of real ramified points $y$ of $f$  with $\bar{u}_y>0$ as there are real ramified points $y$ with $\bar{u}_y<0$. This is also obvious by looking at the real points of $Y$, which is a disjoint union of circles, and using elementary Morse theory. 
\end{ex}

\begin{remark} Going back to the guiding example of smooth projective varieties over $\R$,  the formula of Corollary~\ref{cor:Fibering3} for a map $f:Y\to \P^1$ may be viewed as combining the classical enumerative formulas for counting degeneracies for schemes over $\C$ with using Morse theory to compute the Euler characteristic of a compact oriented manifold $M$ by counting  the number of critical points of a map $f:M\to S^1$ having only non-degenerate critical points, where we count a critical point with the sign of the Hessian determinant. In fact, as the signature of a hyperbolic form in $\GW(\R)$ is zero, and since the trace map $\Tr:\GW(\C)\to \GW(\R)$ sends $q$ to $\text{rank}(q)\cdot h$,  taking the signature of the formula in Corollary~\ref{cor:Fibering3} expresses the Euler characteristic of $Y(\R)^\an$ as  the  sum of the signs of the Hessian determinant at each of the real critical points of $f$. 
\end{remark}

\begin{ex}[Fibering by curves] We consider the case of a pencil of curves in $\P^2$. Let $C, C'$ be smooth curves of degree $d$ in $\P^2$, intersecting transversely. Let $Z=C\cap C'$ and let $F, F'\in k[X_0, X_1, X_2]_d$ be respective defining equations for $C$ and $C'$. Let $\mu_Z:Y\to \P^2$ be the blow-up of $\P^2$ along $Z$. The rational map $f:\P^2\xymatrix{\ar@{-->}[r]&}\P^1$, 
\[
f(x_0:x_1:x_2)=(F(x_0:x_1:x_2):F'(x_0:x_1:x_2)),
\]
defines a morphism $f:Y\to \P^1$ with $f^{-1}(a:b)$ the curve $bF-aF'=0$. We suppose  that for $x\in \P^1$, $f^{-1}(x)$ smooth except for $x\in \{x_1,\ldots, x_s\}$, a set of closed points of $ \P^1$. For simplicity we assume in addition that for each $i$, $f^{-1}(x_i)$ is reduced and has a single ordinary double point $y_i$ as singular point (we do not assume that $k(y_i)=k(x_i)$).  

 Note that our $f$ has only non-degenerate critical points and for $x\in \P^1$ general, the smooth curve $f^{-1}(x)$ satisfies 
 $\deg_k c_1(\omega_{f^{-1}(x)})=d(d-3)$.  This gives us $D(f)=d(d-3)$; applying Corollary~\ref{cor:Fibering3} and Proposition~\ref{prop:EulerCharProperties}, we have
\[
\sum_{i=1}^s\Tr_{k(y_i)/k}\<\det H(f)_{y_i}\>=\<1\>+(d^2-3d+1)\cdot H+\<-1\>\cdot\chi(Z/k).
\]
Since $\chi(Z/k)$ is the trace form of the finite separable extension  $k\to k(Z)$,  our Riemann-Hurwitz formula gives a relation between this trace form and the ``ramification index" $\sum_{i=1}^s\Tr_{k(y_i)/k}\<\det H(f)_{y_i}\>$. Taking the rank recovers the numerical relation given by the classical Riemann-Hurwitz formula, namely
\[
\sum_{i=1}^s[k(y_i):k]= 3d^2-6d+3.
\]

\end{ex}

\section{Generalized Fermat hypersurfaces} We use Corollary~\ref{cor:Fibering2} to compute the Euler characteristic of   a generalized Fermat hypersurface $X$ in $\P^{n+1}$, that is, one with defining polynomial  $\sum_{i=0}^{n+1}a_iX_i^m$. 

Fix an integer $m\ge1$ and a base-field $k$ of characteristic prime to $2m$; we may assume that $k$ is infinite by replacing $k$ with infinite extension of $\ell$-power degree for some odd prime $\ell$. Let $X=X(a_0,\ldots, a_{n+1};m)\subset \P^{n+1}$  be the hypersurface with defining equation $\sum_{i=0}^{n+1}a_i X_i^m=0$, $a_i\in k^\times$. Let $\pi:\tilde{X}\to X$ be the blow-up along the closed subscheme $Z$ defined by $X_n=X_{n+1}=0$; note that $Z=X(a_0,\ldots, a_{n-1};m)$.  We  apply Proposition~\ref{prop:EulerCharProperties} to give
 \begin{equation}\label{eqn:BlowUpHypersurf}
 \chi(\tilde{X})=\chi(X)+\<-1\>\chi(Z).
 \end{equation}
 We have the morphism
 \[
 f:\tilde{X}\to \P^1
 \]
 induced by the rational map $X\to \P^1$, $(x_0:\ldots:x_{n+1})\mapsto 
 (x_n:x_{n+1})$. The map $f$ has non-degenerate critical points $(0:\ldots:0:x_n:x_{n+1})$ satisfying $a_nx_n^m+a_{n+1}x_{n+1}^m=0$ (the critical points do not lie over $Z$, so we may describe the critical points of $f$ as points of $X$). Since $a_na_{n+1}\neq0$, the critical points of $f$ lie in the affine open subset $X_{n+1}\neq0$, so we may use affine coordinates $x_i:=X_i/X_{n+1}$. 
 
On the affine hypersurface $X^0\subset \A^{n+1}$ defined by $\sum_{i=0}^na_ix_i^m+a_{n+1}=0$, the map $f$ is given by
\[
f(x_0,\ldots, x_n)=x_n
\]
and has critical subscheme $X_{crit}\subset X^0$ defined as a subscheme of $\A^{n+1}$ by   $x_i=0$, $i=0,\ldots, n-1$, $x_n^m+a_{n+1}/a_n=0$.

We now apply the Riemann-Hurwitz formula to the projection $f:\tilde{X}\to \P^1$.  Let $y=X_{crit}$, a 0-dimensional reduced closed subscheme of $X$, and use the system of parameters $(x_0,\ldots, x_{n-1})$, generating the maximal ideal in $\sO_{X,y'}$ for each closed  point $y'$ of $y$.  Similarly, we let $g(T)=T^m+a_{n+1}/a_n$, let $x\subset \A^1_k$ be the subscheme $\Spec k[T]/g(T)$ and use the parameter $t_x:=(1/g'(T))g(T)$ in $\sO_{\A^1, x}$. 

 As $df$ is given by the expression
\begin{equation}\tag{$*$}
df=\sum_{i=0}^{n-1} (-1/a_nx_n^{m-1})\cdot a_ix_i^{m-1}dx_i \otimes  \del/\del t_x
\end{equation}
and $\det^{-1}\Omega_{X/k}\otimes \det^{-1}\mathfrak{m}_x/\mathfrak{m}_x^2\cong
(\det(\mathfrak{m}_x/\mathfrak{m}_x^2)^\vee)^{\otimes 2}$, 
we have the local index  $e_x(df)$ living in $\GW(k(x); (\det(\mathfrak{m}_x/\mathfrak{m}_x^2)^\vee)^{\otimes 2}\otimes f^*\omega_{\P^1})$:
\[
e_x(df):=\<(-1/a_nx_n^{m-1})^n((m-1)_\epsilon)^n(\prod_{i=0}^{n-1}a_i)\>\otimes
(\wedge\del/\del x_*)^{\otimes 2}\otimes d t_x
\]
If $n$ is odd, we know that $\chi(X/k)$ is hyperbolic, so we may assume that $n=2r$ is even, in which case this expression reduces to 
\[
e_x(df)=\begin{cases}\frac{1}{2}(m-1)^n\cdot H\<(\prod_{i=0}^{n-1}a_i)\>\otimes(\wedge\del/\del x_*)^{\otimes 2}\otimes d t_x&\text{ if }m\text{ is odd,}\\
[\frac{1}{2}((m-1)^n-1)\cdot H+\<1\>]\<(\prod_{i=0}^{n-1}a_i)\>\otimes(\wedge\del/\del x_*)^{\otimes 2}\otimes d t_x\kern-80pt\\
&\text{ if }m\text{ is even.}
\end{cases}
\]
which reduces further to 
\[
e_x(df)=\begin{cases}
\frac{1}{2}(m-1)^n\cdot H\otimes(\wedge\del/\del x_*)^{\otimes 2}\otimes d t_x&\text{ if }m\text{ is odd,}\\
[(\frac{1}{2}(m-1)^n-1)H+\<\prod_{i=0}^{n-1}a_i\>]\otimes(\wedge\del/\del x_*)^{\otimes 2}\otimes d t_x\kern-25pt\\
&\text{ if }m\text{ is even.}
\end{cases}
\]

Following Remark~\ref{rem:Trace}, our choice of parameter $t_x$ is normalized, so after applying the appropriate comparison isomorphism to put the local index $e_x(df)$ in $\GW(k(x))$, as in the proof of Corollary~\ref{cor:Fibering1},  we have the identity in $H^n(X, \sK^{MW}_n(\omega_{X/k}))$
\[
i_{x*}(e_x(df))=\begin{cases}
i_{x*}[\frac{1}{2}(m-1)^n\cdot H]
&\text{ if }m\text{ is odd,}\\
i_{x*}[\frac{1}{2}((m-1)^n-1)H+\<\prod_{i=0}^{n-1}a_i\>)]&\text{ if }m\text{ is even,}
\end{cases}
\]
where 
\[
i_{x*}:\GW(k(x))=H^0(x, \sK^{MW}_0)\to H^n(X, \sK^{MW}_n(\omega_{X/k}))
\]
is the push-forward. 

The extension $k(x)/ k$ is a finite separable extension, so we have the pushforward map $p_*:\GW(x)\to \GW(k)$ given by  the trace form. Since
\[
Tr_{k(x)/k}(\<1\>)=\begin{cases}\<m\>+\frac{m-1}{2}\cdot H&\text{ for $m$ odd}\\
\<m\>+\<-ma_na_{n+1}\>+\frac{m-2}{2}\cdot H&\text{ for $m$ even,}
\end{cases}
\]
we get 
\begin{equation}\label{eqn:LocalContHypersuf}
p_{X*}i_{x*}(e_x(df))= 
\begin{cases}
\frac{1}{2}(m-1)^nm\cdot H& \text{ for $m$ odd,}\\
\<m\prod_{i=0}^{n-1}a_i\>+\<-m\prod_{i=0}^{n+1}a_i\>+(\frac{1}{2}(m-1)^nm-1)\cdot H\kern-55pt
\\& \text{ for $m$ even.}
\end{cases}
\end{equation}

\begin{theorem}\label{thm:EulerCharDiagonalHyp} Let $X=X(a_0,\ldots, a_{n+1}; m)\subset \P^{n+1}_k$ be a  generalized Fermat hypersurface of degree $m\ge1$, $a_0,\ldots, a_{n+1}\in k^\times$. Suppose that $\Char(k)\hbox{$\not|$\,} 2m$. Let $\delta(X):=\prod_{i=0}^{n+1}a_i$ and define $A_{n,m}\in\Q$ by
\[
A_{n,m}=\begin{cases}
\frac{1}{2}\deg(c_n(T_X))&\text{ for $n$ odd,}\\
\frac{1}{2}\left(\deg(c_n(T_X))-1\right)&\text{ for $n$ even and $m$ odd,}\\
\frac{1}{2}\left(\deg(c_n(T_X))-2\right)&\text{ for $n$ and $m$ even.}
\end{cases}
\]
Then $A_{n,m}$ is an integer, depending only on $n$ and $m$. Moreover, 
\[
\chi(X/k)=\begin{cases} 
A_{n,m}\cdot H&\text{ for $n$ odd,}\\
A_{n,m}\cdot H+\<m\>&\text{ for $n$ even and $m$ odd,}\\
A_{n,m}\cdot H+\<m\>+\<-m\delta(X)\>&\text{ for $n$ and $m$ even.}
\end{cases}
\]
\end{theorem}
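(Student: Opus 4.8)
The plan is to run an induction on $n$ using the blow-up recursion \eqref{eqn:BlowUpHypersurf} together with the degeneration formula (Corollary~\ref{cor:Fibering2}). First I would dispose of the odd-dimensional case: since $\dim_k X = n$ is odd, Theorem~\ref{thm:OddEulerChar} immediately gives $\chi(X) = m_X \cdot h$ for some integer $m_X$, and the rank homomorphism forces $2 m_X = \deg(c_n(T_X))$, so $m_X = A_{n,m}$ and $A_{n,m}$ is an integer; this also shows it depends only on $n$ and $m$, since $\deg(c_n(T_X))$ is computed from the Euler sequence of $X \subset \P^{n+1}$ and depends only on $n$ and $m$. So the substance is entirely in the even case $n = 2r$.

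For $n$ even, I would first record the blow-up $\tilde X = \Bl_Z X$ with $Z = X(a_0,\dots,a_{n-1};m) \subset \P^n$ of dimension $n-1$ (odd), so by the odd case $\chi(Z) = A_{n-1,m}\cdot h$, and \eqref{eqn:BlowUpHypersurf} reads $\chi(\tilde X) = \chi(X) + \<-1\>\cdot A_{n-1,m}\cdot h = \chi(X) + A_{n-1,m}\cdot h$ (using $\<-1\>h = h$). Next, apply the degeneration formula to $f:\tilde X \to \P^1$. Here $\P^1$ admits the half-canonical bundle $M = \sO_{\P^1}(1)$ with $\psi:M^{\otimes -2}\to \omega_{\P^1/k}^{-1}$ normalized as in Remark~\ref{rem:Trace}, $f$ has only the finitely many non-degenerate critical points described in the text (none lying over $Z$, hence computable in affine coordinates), and $df$ is locally diagonalizable. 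Corollary~\ref{cor:Fibering2} then gives
\[
\chi(\tilde X) = D(f)\cdot h + (-\<-1\>)^{n+1}\sum_{x} p_{\tilde X *} i_{x*}(e_x(df)),
\]
where $n+1$ is odd so $(-\<-1\>)^{n+1} = \<-1\>$, and the local contributions $p_{\tilde X*}i_{x*}(e_x(df))$ are exactly the quantities computed in \eqref{eqn:LocalContHypersuf}: they sum (over the single closed point $x$ of degree $m$) to $\tfrac12(m-1)^n m \cdot h$ when $m$ is odd, and to $\<m\delta(X)\> + \<-m\delta(X)a_na_{n+1}\> + \tfrac12((m-1)^n-1)m\cdot h$ when $m$ is even — wait, I must be careful: the point $x$ has $\deg_k x = m$, and after the trace computation one gets $\<m\prod_{i=0}^{n-1}a_i\> + \<-m\prod_{i=0}^{n+1}a_i\>$ in the even-$m$ case. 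Since $\delta(X) = \prod_{i=0}^n a_i$, note $m\prod_{i=0}^{n-1}a_i = m\delta(X)a_n^{-1}$ and $\<m\delta(X)a_n^{-1}\> = \<m\delta(X)a_n\>$; similarly $\<-m\prod_{i=0}^{n+1}a_i\> = \<-m\delta(X)a_{n+1}\>$. Some bookkeeping with $\<u\> = \<uv^2\>$ and $h = \<w\> + \<-w\>$ is needed to reconcile this with the clean form $\<m\> + \<-m\delta(X)\>$ claimed in the theorem; this reconciliation, absorbing the "extra" square-class discrepancies into multiples of $h$, is where I expect to do the most careful (but still elementary) work.

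The remaining step is to solve for $\chi(X)$. Combining $\chi(\tilde X) = \chi(X) + A_{n-1,m}\cdot h$ with the degeneration formula output, I get $\chi(X) = D(f)\cdot h - A_{n-1,m}\cdot h + \<-1\>\cdot(\text{local sum})$. The non-hyperbolic part of the local sum is $\<1\>$ times its own rank-adjustment: it is $0$ for $m$ odd, $\<m\>$ for $m$ odd... I mean, tracking the anisotropic part, one reads off directly $\<m\>$ (for $m$ odd) and $\<m\> + \<-m\delta(X)\>$ (for $m$ even), with everything else collapsing to an integer multiple of $h$. That the resulting integer coefficient equals $A_{n,m}$ (as opposed to being merely "some integer") follows by applying the rank homomorphism to both sides and comparing with the definition of $A_{n,m}$: $\operatorname{rank}\chi(X) = \deg(c_n(T_X))$ by Theorem~\ref{thm:comparison} and the classical top-Chern-class formula, and $\operatorname{rank}(\<m\>) = 1$, $\operatorname{rank}(\<m\>+\<-m\delta(X)\>) = 2$. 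The main obstacle is purely arithmetic: managing the square-class factors $a_i$ in $\GW(k)$ through the trace computation and showing the "messy" formula \eqref{eqn:LocalContHypersuf} equals the advertised clean one modulo $h$; the topological/homotopical input is all already in place via the cited corollaries.
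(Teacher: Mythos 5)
There is a genuine gap, and it is concentrated in one concrete misidentification: the center of the blow-up. The subscheme $Z\subset X$ is cut out by the \emph{two} equations $X_n=X_{n+1}=0$, so it is the degree-$m$ diagonal hypersurface $\sum_{i=0}^{n-1}a_iX_i^m=0$ inside the $\P^{n-1}$ spanned by $X_0,\dots,X_{n-1}$; it has codimension $2$ in $X$ and dimension $n-2$, not $n-1$. (This is also forced by the blow-up formula \eqref{eqn:BlowUpHypersurf}: the correction term $\<-1\>\chi(Z)$ is $\sum_{i=1}^{c-1}\<-1\>^i\,\chi(Z)$ from Proposition~\ref{prop:EulerCharProperties}(4) with $c=2$; for $c=1$ the correction would vanish.) Consequently, when $n$ is even $Z$ is again \emph{even}-dimensional, Theorem~\ref{thm:OddEulerChar} does not apply to it, and $\chi(Z)$ is not hyperbolic: for $m$ even it carries its own anisotropic part $\<m\>+\<-m\delta(Z)\>$. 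Your scheme discards exactly the term that makes the final answer come out right. Indeed, the "bookkeeping" you defer cannot close on its own: the local contribution \eqref{eqn:LocalContHypersuf} produces the binary form $\<m\prod_{i=0}^{n-1}a_i\>+\<-m\prod_{i=0}^{n+1}a_i\>$, whose discriminant is $-a_na_{n+1}$ modulo squares, whereas the target $\<m\>+\<-m\delta(X)\>$ has discriminant $-\prod_{i=0}^{n}a_i$; these differ by a non-square for generic $a_i$, so the two binary forms are not equal modulo $\Z\cdot h$. In the paper's argument it is precisely the term $-\<-1\>\chi(Z)$, fed by the inductive hypothesis in dimension $n-2$, that cancels the coefficient-dependent square class $\<m\delta(Z)\>$ and leaves $-\<-m\>+\<-m\delta(X)\>\equiv\<m\>+\<-m\delta(X)\>\pmod{\Z\cdot h}$.

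Repairing this changes the architecture of your proof: you need a genuine induction on even $n$ in steps of $2$, with base case $n=0$, where $X(a_0,a_1;m)=\Spec k[T]/(T^m+a_1/a_0)$ and $\chi(X)=\Tr_{X/k}(\<1\>)$ is computed directly from the trace form of $k[T]/(T^m-c)$ -- a step absent from your proposal. Two smaller slips to fix along the way: the exponent in Corollary~\ref{cor:Fibering2} is $r=\dim_k\tilde X=n$ (the blow-up does not raise dimension), so the prefactor is $(-\<-1\>)^n=1$ for $n$ even, not $\<-1\>$; and the rank count at the end should use that $\chi(\tilde X)=\chi(X)+\<-1\>\chi(Z)$ already absorbs $\operatorname{rank}\chi(Z)$ into the hyperbolic part. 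The odd-$n$ case of your proposal is fine and matches the paper.
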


\begin{proof} It is clear that the rational number $A_{n,m}$ depends only on $n$ and $m$. For $n$ odd,  the identity $\chi(X/k)=B\cdot h$ for some integer $B$ follows from Corollary~\ref{cor:OddEulerChar}. Since  $\pi_n(e^\CW(T_X))=c_n(T_X)$ in $H^n(X, \sK^M_n)$, we see that 
\[
2B=\rank(\chi(X/k))=\deg(c_n(T_X)), 
\]
so   $A_{n,m}=B$.

We now assume $n$ is even and we prove the identity by induction on $n$. We first consider the case of even $m$. For $n=0$, 
\[
X(a_0, a_1;m)=\Spec k[T]/(T^m+a_{1}/a_0)
\]
and $\chi(X/k)$ is given by the trace form, 
\[
\chi(X/k)=\Tr_{X/k}(\<1\>)=\frac{m-2}{2}\cdot H+\<m\>+\<-m\delta(X)\>. 
\]
As $c_0(T_X)$ has degree $m$, the result is proven in this case. In general, assume the result for $n-2$, and let $Z=X(a_0,\ldots, a_{n-1})$. Then combining \eqref{eqn:BlowUpHypersurf}, Corollary~\ref{cor:Fibering2}  and our computation of the local contributions \eqref{eqn:LocalContHypersuf}, we have
\[
\chi(X/k)=\<m\delta(Z)\>+\<-m\delta(X)\>-\<-1\>\chi(Z/k)+A\cdot H
\]
for some integer $A$. Using our induction hypothesis, this reduces to 
\[
\chi(X/k)=-\<-m\>+\<-m\delta(X)\>+B\cdot H
\]
for some integer $B$. But 
\[
H-\<-m\>=\<m\>+\<-m\>-\<-m\>=\<m\>
\]
so we have 
\[
\chi(X/k)=\<m\>+\<-m\delta(X)\>+(B-1)\cdot H.
\]
In particular, this shows that $\deg(c_n(T_X))=2B$, which shows as above that $A_{n,m}$ is an integer and gives
\[
\chi(X/k)=-\<m\>+\<-m\delta(X)\>+A_{n,m}\cdot H.
\]

For odd $m$, the proof is essentially the same, starting with
\[
\chi(X/k)=\Tr_{X/k}(\<1\>)=\frac{m-1}{2}\cdot H+\<m\>. 
\]
for $X=X(a_0, a_1;m)$; we leave the details to the reader.
\end{proof}

Recall that for a quadratic form $q$  of even rank $n=2m$ over a field $k$ of characteristic different from 2, the  discriminant of $q$ is the element of $k^\times/k^{\times2}$ given by $\det(q)$, where $(q)$ is the  matrix of  the symmetric bilinear form corresponding to $q$, with respect to some choice of basis for the underlying vector space of $q$. For quadrics, Theorem~\ref{thm:EulerCharDiagonalHyp} gives
\begin{corollary}\label{cor:EvenQuadric EulerChar} Let $k$ be a field with $\Char k\neq2$ and let $Q$ be a non-singular quadric hypersurface in $\P^{n+1}_k$. Suppose $Q$ has defining  form $q$, with discriminant $\delta_q$. Then
\[
\chi(Q/k)=\begin{cases}\frac{n+1}{2}\cdot H&\text{ for $n$ odd,}\\
\frac{n}{2}\cdot H+\<2\>+\<-2\delta_q\>&\text{ for $n=2m$ even.}
\end{cases}
\]
\end{corollary}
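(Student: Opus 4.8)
\medskip
\noindent\emph{Proof plan for Corollary~\ref{cor:EvenQuadric EulerChar}.}
The plan is to deduce everything from Theorem~\ref{thm:EulerCharDiagonalHyp} applied with $m=2$, which is legitimate since $\Char k\neq 2$. First I would diagonalize the defining form: as $\Char k\neq 2$ we may write $q\cong\langle a_0,\dots,a_{n+1}\rangle$ with $a_i\in k^\times$, and then $\delta_q=\prod_{i=0}^{n+1}a_i$ in $k^\times/(k^\times)^2$. A diagonalizing linear change of coordinates on $\P^{n+1}$ identifies $Q$ with the diagonal hypersurface $X:=X(a_0,\dots,a_{n+1};2)$ as $k$-schemes; since $\chi(-)=\chi^{cat}(-)$ is built from $\Sigma^\infty_T(-)_+$ it depends only on the isomorphism class, so $\chi(Q)=\chi(X)$. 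Theorem~\ref{thm:EulerCharDiagonalHyp} with $m=2$ (even) then gives
\[
\chi(Q)=\begin{cases}A_{n,2}\cdot h & n\text{ odd},\\[2pt] A_{n,2}\cdot h+\langle 2\rangle+\langle -2\delta(X)\rangle & n\text{ even},\end{cases}
\]
with $\delta(X)=\prod_{i=0}^{n+1}a_i=\delta_q$, so $\langle -2\delta(X)\rangle=\langle -2\delta_q\rangle$. Hence only the value of the integer $A_{n,2}$ remains, and by the theorem it depends only on $n$ (and $m=2$), namely $A_{n,2}=\tfrac12\deg c_n(T_X)$ for $n$ odd and $A_{n,2}=\tfrac12\big(\deg c_n(T_X)-2\big)$ for $n$ even.

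The one remaining computation is therefore the classical Euler number $\deg c_n(T_X)$ of a smooth $n$-dimensional quadric. Since this is a purely numerical invariant, I would pass to $\bar k$ (or to $\C$, invoking Remark~\ref{rem:CatTopEuler}), where $\deg c_n(T_X)=\chi_{\mathrm{top}}(Q^n)$ for a smooth quadric $Q^n\subset\P^{n+1}$. By the Bruhat cell decomposition, $Q^n$ has one $2i$-cell for each $0\le i\le n$ when $n$ is odd and an extra cell in middle real dimension $n$ when $n$ is even, so $\chi_{\mathrm{top}}(Q^n)=n+1$ for $n$ odd and $n+2$ for $n$ even. Alternatively, and entirely algebraically, from the Euler sequence and adjunction one has $c(T_X)=(1+H)^{n+2}(1+2H)^{-1}$ in $\CH^*(X)$ with $H$ the hyperplane class and $\deg H^n=2$; writing $1+H=\tfrac12\big((1+2H)+1\big)$ and expanding, only the summands with index $0,n+1,n+2$ contribute to the coefficient of $H^n$, giving $\deg c_n(T_X)=2\cdot\tfrac14\big((-1)^n+2n+3\big)$, i.e.\ $n+1$ for $n$ odd and $n+2$ for $n$ even. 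Either way $A_{n,2}=\tfrac{n+1}{2}$ for $n$ odd and $A_{n,2}=\tfrac12\big((n+2)-2\big)=\tfrac n2$ for $n$ even, which is exactly the asserted formula.

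Essentially all of this is bookkeeping around Theorem~\ref{thm:EulerCharDiagonalHyp}; the only step that requires any actual work is the evaluation of $\deg c_n(T_X)$, and even that is classical. Of the two routes above I would present the cohomology-of-quadrics computation as the main argument (it is completely standard and avoids a binomial identity), while recording the short Chern-number computation as a self-contained verification.
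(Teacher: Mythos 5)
Your proof is correct and follows essentially the same route as the paper: diagonalize $q$ by a linear change of coordinates, apply Theorem~\ref{thm:EulerCharDiagonalHyp} with $m=2$, and pin down the integer $A_{n,2}$ by computing the rank of $\chi(Q)$ over $\bar k$ via the cell decomposition of the quadric ($n+1$ cells for $n$ odd, $n+2$ for $n$ even). The supplementary Chern-class evaluation of $\deg c_n(T_X)$ is a correct, purely algebraic alternative to the cellularity argument, but does not change the structure of the proof.
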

This answers a question raised by Kass and Wickelgren (private communication).

\begin{proof} If $k$ is algebraically closed, then $Q$ is cellular with $n+1$ cells in case $n$ is odd, and $n+2$ cells if $n$ is even. Thus by Proposition~\ref{prop:EulerCharProperties}
\[
\text{rank}\chi(Q/k)=\begin{cases}n+1&\text{ for $n$ odd,}\\
n+2&\text{ for $n$ even.}
\end{cases}
\]
With this, the corollary  follows from Theorem~\ref{thm:EulerCharDiagonalHyp}, since every quadratic form is diagonalizable by a linear change of coordinates, and the discriminant is invariant modulo squares.
\end{proof}

\begin{remark} Applying Theorem~\ref{thm:EulerCharDiagonalHyp} for $m=1$ gives yet another proof that
\[
\chi(\P^n/k)=\begin{cases} \frac{n+1}{2}\cdot H&\text{ for $n$ odd,}\\
\<1\>+\frac{n}{2}\cdot H&\text{ for $n$ even.}
\end{cases}
\]
\end{remark}

\begin{remark} The fact that $\chi(X(a_0,\ldots, a_{n+1};m)/k)$ depends only on $m$ and $n$ for $m$ odd should not be surprising: every generalized Fermat hypersurface $X(a_0,\ldots, a_{n+1};m)$ with $m$ odd is isomorphic to $X(1,\ldots, 1;m)\subset \P^{n+1}$ after a field extension of odd degree, and the base-extension map $\GW(k)\to \GW(F)$ for a finite field extension $F/k$ is injective if $[F:k]$ is odd.
\end{remark}

\end{document}